\documentclass[en,oneside,bbfont,article]{amsart} 
\usepackage[lmargin = 30mm, rmargin = 30mm, bmargin = 25mm, tmargin=25mm]{geometry}
\usepackage[utf8]{inputenc}

\usepackage{mathtools}
\DeclarePairedDelimiter\ceil{\lceil}{\rceil}

\providecommand{\examplename}{Example}

\newtheorem{theorem}{Theorem}[section]

\newtheorem{corollary}[theorem]{Corollary}
\newtheorem{lemma}[theorem]{Lemma}
\theoremstyle{remark}
\newtheorem{remarkx}[theorem]{Remark}
\newenvironment{remark}
    {\pushQED{\qed}\remarkx}
    {\popQED\endremarkx}
\theoremstyle{definition}
\newtheorem*{example*}{\protect\examplename}

\theoremstyle{definition}
\newtheorem{defx}[theorem]{Definition}
\newenvironment{definition}
    {\pushQED{\qed}\defx}
    {\popQED\enddefx}

\newtheorem*{assumption*}{Assumption}

\usepackage{mathrsfs}
\usepackage{amssymb}
\usepackage{amstext}
\usepackage{dsfont}
\usepackage{amsthm}
\usepackage{mleftright}
\usepackage{amsmath}
\mathtoolsset{showonlyrefs}
\usepackage{bm}
\usepackage{comment}
\usepackage{caption,enumerate,enumitem}
\usepackage[labelformat=simple]{subcaption}

\usepackage{scalefnt}
\usepackage{float,graphicx,verbatim}
\usepackage{tikz}
\usetikzlibrary{calc}
\usepackage{pgfplots}
\pgfplotsset{compat=1.16}
\RequirePackage[colorlinks=true,linkcolor=black,
	citecolor=blue,urlcolor=blue]{hyperref}

\usetikzlibrary{fadings,decorations.pathreplacing} 

\tikzfading[name=fade right,
  left color=transparent!0,  
  right color=transparent!100 
]

\usepackage[backend=biber,style=ieee,citestyle=numeric,sorting=nyt,maxbibnames=99,dashed=false,sortcites=true]{biblatex}
\renewbibmacro{in:}{}
\appto{\bibsetup}{\sloppy}
\usepackage[foot]{amsaddr}

\usepackage{todonotes}
\allowdisplaybreaks

\newcommand\N{\mathbb{N}}

\newcommand\calP{\mathcal{P}}
\newcommand\calT{\mathcal{T}}

\newcommand\R{\mathbb{R}}

\newcommand\E{\mathds{E}}

\newcommand\Ei{\mathrm{Ei}}

\newcommand\1{\mathds{1}}
\newcommand\Oh{\mathcal{O}}
\newcommand\oh{\mathrm{o}}

\newcommand\da{\downarrow}

\newcommand\eqd{\overset{d}{=}}

\newcommand\Var{\mathrm{Var}}

\newcommand\nf[1]{\normalfont{#1}}

\newcommand{\D}{\,\mathrm{d}}
\newcommand{\ov}[1]{\overline{#1}}

\newcommand{\wt}[1]{\widetilde{#1}}
\newcommand{\wh}[1]{\widehat{#1}}

\usepackage[foot]{amsaddr}

\title{Generalised Random Parking for Trapeziums on a Strip}
\author{David Kramer-Bang$^{\dag}$ \& Stjepan \v{S}ebek$^*$}

\address{$^\dag$Aarhus University, Department of Mathematics, DK}
\address{$^{*}$University of Zagreb Faculty of Electrical Engineering and Computing, HR}

\email{bang@math.au.dk}
\email{stjepan.sebek@fer.unizg.hr}

\begin{document}

\begin{abstract}
In this article, we study a generalisation of R\'{e}nyi’s car-parking problem in which isosceles trapeziums are sequentially deposited on a strip. We derive an explicit formula for the parking constant in terms of the lengths of the two bases, recovering the classical rectangular and triangular models as special cases. We further obtain quantitative finite-size asymptotics for both the expected number and the variance of deposited trapeziums, with convergence rates that depend explicitly on the geometry of the deposited particle. In particular, although the recursive construction involves two different substrate geometries, their variances have the same leading asymptotic density. Finally, we show that the parking constant depends non-monotonically on the ratio of the two base lengths and possesses a unique minimiser, so that the least efficient shape is a genuine trapezium rather than a triangle.

\end{abstract}

\subjclass[2020]
	{Primary: 05B40, 60C05; Secondary: 60D05} 
\keywords{random parking, jamming limit, mean asymptotics, variance asymptotics}

\maketitle

\section{Introduction}\label{sec:intro}
Many processes in physical chemistry and biology exhibit irreversible dynamics of the following type: particles of finite size arrive randomly in a bounded $d$-dimensional region and are adsorbed, provided they do not overlap with already placed particles. The procedure continues until no further particles can be accommodated, in which case we say that saturation or jamming has been achieved. This mechanism is known under several names, including random sequential adsorption~\cite{Evans_1993}, random sequential packing~\cite[\S 1.10]{Hall_1988}, random parking~\cite{Renyi_OG}, sequential inhibition~\cite[\S 5.6]{Diggle_2003}, and irreversible monomer filling~\cite{Wolf_Evans_Hoffman_1984}. The terminology depends on the context. The particles may be called atoms, cars, or monomers, and the region is a surface or a substrate. The models most thoroughly investigated in the literature are either lattice-based or continuum one-dimensional systems, where the substrate is an interval, and the particles are line segments of fixed length arriving uniformly at random. In higher dimensions, the particles are typically spheres, cubes, or related shapes~\cite{Viot_Tarjus_Ricci_Talbot_1992}.

Two classical models are the model of the attachment of pendant groups in a polymer chain by Paul Flory~\cite{Flory_1939}, and the car-parking model by Alfr\'{e}d R\'{e}nyi~\cite{Renyi_OG}. The essential difference is that Flory’s model is defined on a discrete substrate, whereas R\'{e}nyi’s is defined on a continuous substrate. Since our work is a generalisation of R\'{e}nyi’s model, we recall its formulation in more detail. Fix $x>0$, representing the length of the kerb, i.e.\ the interval $[0,x]$. Cars are unit-length intervals. At each step, a uniform random variable on $(0,x)$ is generated, representing the left endpoint of a new car. If this car overlaps with an already parked one (or exceeds the boundaries of the parking lot), it is discarded; otherwise, it parks. The process continues until every remaining gap is shorter than one, at which point jamming occurs.

One of the most important numerical characteristics of all the random sequential adsorption models is the so-called jamming limit (parking constant). The jamming limit is the expected occupied density of the substrate as the size of the substrate tends to infinity. It can be studied experimentally~\cite{marvel1938structure}, via simulations~\cite{nord1991irreversible, tory1983simulation, wang2000series, krizanc2009, krapivsky2023jamming}, or analytically~\cite{Flory_1939, Renyi_OG, krapivsky2010kinetic, fan-percus, baram-kutasov, dining_table, krizanc2009, DPSZ-ladder, triangles}. While analytic expressions are known for some special cases, most results in the literature rely on approximations from simulations. Explicit formulas are usually limited to one-dimensional models. In higher dimensions, analytic approaches are rarely feasible. Related to R\'enyi’s model, Pal\'asti conjectured~\cite{palasti1960some} that the coverage of aligned $d$-dimensional hypercubes satisfies $\theta_d=\theta_R^d$ (where $\theta_R$ is the parking constant from the one-dimensional R\'enyi's model; see~\eqref{eq:renyi_const}), which sparked debate, including an erroneous “proof” and subsequent critiques~\cite{palasti1960some,weiner1978sequential,weiner1980alternative,hori1979weiner,tory1979some,pickard1980critique}.

In this paper, we investigate an extension of the one-dimensional setting, where the substrate is a strip as in Figure~\ref{fig:strip} (see also Figure~\ref{fig:shapes_of_strips}). This direction was initiated in~\cite{triangles}. The motivation for such generalisation comes from discrete analogues. Flory’s original model~\cite{Flory_1939} is a one-dimensional lattice adsorption model, later reformulated in various equivalent ways, such as a discrete version of Rényi’s parking problem~\cite{GerinPRparkingHAL, Page}, model of dimer deposition on a linear substrate~\cite{krapivsky2010kinetic}, unfriendly seating arrangement~\cite{Rothman_sol}, or Rydberg atom excitation model~\cite{krapivsky2020large}. Generalisations of Flory’s model have then been studied on ladder graphs (two-row square ladders) in~\cite{baram-kutasov, fan-percus, dining_table, DPSZ-ladder, krizanc2009}, and those works inspired the analogous generalisation in the continuous setting.

\begin{figure}
    \centering
    \begin{tikzpicture}[scale = 0.65]
	\draw (0, 0) -- (10, 0);
	\draw (0, 1) -- (10, 1);
        \draw (0, 0) -- (0, 1);
        \draw (10, 0) -- (10, 1);
        \draw[decorate,decoration={brace,amplitude=5pt,raise=0pt,mirror},xshift=0pt, yshift = -3pt] (0, 0) -- (10, 0) node [midway,yshift=-13pt,xshift=0pt]{$x$};
        \draw[decorate,decoration={brace,amplitude=5pt,raise=0pt,mirror},xshift=3pt, yshift = 0pt] (10, 0) -- (10, 1) node [midway,yshift=0pt,xshift=13pt]{$1$};
    \end{tikzpicture}
    \caption{Strip of length $x$ and height $1$.}
    \label{fig:strip}
\end{figure}

In~\cite{triangles}, the substrate is precisely the strip shown in Figure~\ref{fig:strip}. The deposited particles are isosceles triangles (see Figure~\ref{fig:deposited_triangles}), and the jamming limit is computed explicitly.
\begin{figure}
\centering
\begin{subfigure}{0.3\textwidth}
    \centering
    \begin{tikzpicture}[scale = 0.65]
	    \draw (0, 0) -- (5, 0);
	    \draw (0, 1) -- (5, 1);
        \filldraw[fill=gray] (0.4, 0) -- (2.4,0) -- (1.4,1);
        \filldraw[fill=gray] (2.9, 1) -- (4.9,1) -- (3.9,0);
    \end{tikzpicture}
    \caption{Triangles}
    \label{fig:deposited_triangles}
\end{subfigure}
\begin{subfigure}{0.3\textwidth}
    \centering
    \begin{tikzpicture}[scale = 0.65]
	    \draw (0, 0) -- (5, 0);
        \draw (0, 1) -- (5, 1);
        \filldraw[fill=gray] (0.2, 0) -- (2.2,0) -- (2.2,1) -- (0.2, 1);
        \filldraw[fill=gray] (2.5, 0) -- (4.5,0) -- (4.5,1) -- (2.5, 1);
    \end{tikzpicture}
    \caption{Rectangles}
    \label{fig:deposited_rectangles}
\end{subfigure}
\begin{subfigure}{0.3\textwidth}
    \centering
    \begin{tikzpicture}[scale = 0.65]
	   \draw (0, 0) -- (5, 0);
        \draw (0, 1) -- (5, 1);
        \filldraw[fill=gray] (0.3, 0) -- (2.3,0) -- (1.8,1) -- (0.8, 1);
        \filldraw[fill=gray] (2.6, 0) -- (4.6,0) -- (4.1,1) -- (3.1, 1);
    \end{tikzpicture}
    \caption{Trapeziums}
    \label{fig:deposited_trapeziums}
\end{subfigure}
\caption{Different shapes of deposited objects.}
\end{figure}
Note that if rectangles were deposited instead (see Figure~\ref{fig:deposited_rectangles}), the problem would reduce to Rényi’s classical one-dimensional model. Hence, the precise shape of the jamming limit (in this setting where objects are deposited on a substrate in the shape of a strip) is known in the extreme cases of triangles and rectangles. Specifically, Rényi~\cite{Renyi_OG} showed that in the one-dimensional car-parking problem, the expected coverage is
\begin{equation}\label{eq:renyi_const}
	\theta_R = \int_0^{\infty} \exp \mleft( -2 \int_0^x \frac{1 - e^{-y}}{y}dy \mright)dx \approx 0.7475979,
\end{equation}
and for the isosceles triangles, the jamming limit from~\cite{triangles} is given by
\begin{equation}\label{eq:triangle_const}
    \eta_\triangle = \frac{1}{2} \int_0^{\infty} \mleft( 3 - e^{-2t} \mright) \exp \mleft( -\int_0^t \frac{2 - e^{-u} - e^{-2u}}{u} du \mright) dt \approx 0.662939. 
\end{equation}
In the present paper, we interpolate between these two cases by considering isosceles trapeziums (see Figure~\ref{fig:deposited_trapeziums}). As already mentioned, our substrate is a strip. Once we start depositing trapeziums, the relevant substrates are instead the two strip geometries introduced below (see Figure~\ref{fig:shapes_of_strips}). Trapeziums can arrive on the strip in two orientations -- with the longer base on the upper side of the strip, and with the longer base on the lower side of the strip. The precise position and orientation of the newly arrived trapezium is fully determined with its left-most point (i.e.\ with the left-most point of its longer base). Hence, we only need to sample one point, either from the upper side, or from the lower side of the strip, and we know exactly how to park a trapezium at that point. Let the sum of the lengths of the upper side and the lower side of the substrate be $2x$, where the upper one is of length $y \in (0, 2x)$, and the lower one has length $2x-y$. At each step a uniform random variable on $(0, 2x)$ is generated, representing this left endpoint of a new trapezium, where values sampled between $0$ and $y$ imply that we try to park the new trapezium so that its longer base is on the upper side of the substrate, and values sampled between $y$ and $2x$ imply that we try to park the new trapezium so that its longer base is on the lower side of the substrate. If this trapezium overlaps with an already parked one (or exceeds the boundaries of the parking lot), it is discarded; otherwise, it is parked at that position. The process continues until every remaining gap can no longer fit an additional trapezium, and we then say that we have reached the jammed state. This procedure is described in full detail in Definition~\ref{defn:UDS_} below. As our main result, we determine the jamming limit for the whole family of isosceles trapeziums, recovering both known results,~\eqref{eq:renyi_const} and~\eqref{eq:triangle_const}, as special cases.

Beyond just determining the jamming limit for trapeziums, we also strongly refine the asymptotic analysis from~\cite{triangles}. In our analysis, we follow closely the method by Dvoretzky and Robbins~\cite{Dvoretzky_Robbins}. Let us denote by $\mu(x)$ the expected number of parked cars in the jammed state of R\'enyi's model, when the parking has length $x$. Dvoretzky and Robbins~\cite{Dvoretzky_Robbins} proved, 
\begin{equation}
	\mu(x) = \theta_R x + (\theta_R - 1) + \Oh \mleft( \mleft( \frac{2e}{x} \mright)^{x - 3/2} \mright), \quad \text{as }x \to \infty.
\end{equation}
Earlier, R\'{e}nyi~\cite{Renyi_OG} had established a weaker version with an error term of order $\Oh(x^{-n})$ for any $n$, and a similar result was obtained by Ney~\cite{ney-thesis}. As R\'{e}nyi reports~\cite{Renyi_OG}, N.\ G.\ de Bruijn suggested that sharper asymptotics could be derived by adapting his method for certain differential-difference equations in number theory~\cite{debruijn}. This approach is closely related to the method later used by Dvoretzky and Robbins. An even sharper estimate was obtained in the thesis of Beenakker~\cite[Ch.~IX]{beenakker}, under de Bruijn’s supervision, using a more involved technique, where the author shows, 
\begin{equation*}
	\mu(x) = \theta_R x + (\theta_R - 1) + \Oh \mleft( \mleft( \frac{2e}{x\log x} \mright)^{x - 3/2} \mright), \quad \text{as }x \to \infty.
\end{equation*}

Dvoretzky and Robbins~\cite{Dvoretzky_Robbins} also obtained a corresponding second-order asymptotic result for the fluctuations of the parking process. More precisely, writing $\sigma^2(x)\coloneqq \Var(N_x)$, where $N_x$ denotes the number of cars in the jammed state, they proved that there exists a constant $\lambda_2>0$ such that, 
\begin{equation}
\sigma^2(x)
=
\lambda_2 x+\lambda_2
+
\Oh\mleft(
\mleft(\frac{4e}{x}\mright)^{x-4}
\mright), \quad \text{as }x \to \infty.
\end{equation}
Thus, in addition to identifying the linear growth of the variance, their result gives a super-exponentially small remainder after the constant-order correction. Below, we establish an analogue of this second-order estimate for the trapezium parking model. In particular, we show that the variance grows linearly for both of the substrate types arising from the recursive decomposition, with the same asymptotic variance density, and obtain explicit rates which depend on the geometry of the deposited trapezium.

In our setting, the classical result of Dvoretzky and Robbins~\cite{Dvoretzky_Robbins} is recovered when the deposited trapezium (with base lengths $a$ and $b$, see Section~\ref{sec:prem_park_prob}) reduces to a square. More generally, when $a=b$, horizontal rescaling reduces the model to the classical one-dimensional parking problem, and our estimates make explicit how the finite-size error depends on the common width $a$. For genuine trapeziums, $a>b$, a different asymptotic regime appears. In particular, the distinction between $a=b$ and $a>b$ changes the polynomial correction in the Dvoretzky--Robbins-type remainder, while the length $a$ of the longer base determines the natural scale $x/a$ entering its exponent. Thus, the quantitative bounds exhibit a geometry-dependent polynomial correction, and not merely as a multiplicative constant in front of an otherwise universal error term. In this sense, our asymptotic results go beyond a direct extension of~\cite{Dvoretzky_Robbins} from unit intervals to a larger class of particles.

We also obtain a corresponding second-order theory. The recursive geometry of the parking procedure naturally gives rise to two different auxiliary substrate types, a parallelogram and a trapezium, denoted below by $\calP_x$ and $\calT_x$ respectively (see Definition~\ref{defn_tarp_parll} and Figure~\ref{fig:shapes_of_strips}). These are intrinsic to the model, since, after the first accepted deposition, the remaining region splits into two smaller substrates which are again of these two types. Throughout, we denote the number of trapeziums deposited on a parallelogram $\calP_x$ by $P_x$ and the number trapeziums of deposited on a trapezium $\calT_x$ by $T_x$, see~\eqref{eq:defn:P_x,T_x} below for a rigorous definition of the random variables $P_x$ and $T_x$. Although the corresponding local recursions are different, we prove that their fluctuations have the same leading-order behaviour. More precisely, there exists a constant $\lambda_2(a,b)>0$ such that both variances have the common asymptotic expansion
\[
\Var(P_x)
=
\lambda_2(a,b)\mleft(x+\frac{a+b}{2}\mright)
+\oh(1),
\qquad
\Var(T_x)
=
\lambda_2(a,b)\mleft(x+\frac{a+b}{2}\mright)
+\oh(1),
\]
and we in fact obtain explicit Dvoretzky--Robbins-type bounds for the two remainder terms. In the square case $a=b=1$, this recovers the second-order asymptotic estimate of~\cite{Dvoretzky_Robbins}. For genuine trapeziums, the rate again changes in a shape-dependent way. The fact that the two substrate geometries have different finite-volume recursions but nevertheless share the same asymptotic variance density is, in particular, a useful indication that their boundary geometry disappears at the level of the leading bulk fluctuations.

General stabilisation theory predicts linear-order fluctuations and Gaussian limits for broad classes of random sequential adsorption (RSA) models~\cite{MR2291806,MR1890065}. These results provide important general context for the fluctuation behaviour of RSA processes, but their hypotheses and geometric framework do not directly yield a central limit theorem (CLT) for the quasi-one-dimensional trapezium parking model considered here. By contrast, the recursive structure of the present model permits much finer Dvoretzky–Robbins-type finite-size estimates for the mean and variance, including explicit super-exponential error bounds. In the classical case $a=b$, corresponding to the deposition of squares, a CLT is already known from Dvoretzky and Robbins~\cite{Dvoretzky_Robbins}. For genuine trapeziums $0\le b<a$, however, establishing asymptotic normality of $P_x$ and $T_x$ remains open. Proving such a result would require either an adaptation of existing stabilisation methods to the present setting or a different argument exploiting the recursive decomposition developed in this paper. We therefore leave the CLT, as well as quantitative normal-approximation bounds, for future work; see Section~\ref{sec:open_problems}.

Finally, the explicit formula for the parking constant reveals a geometric phenomenon that is invisible from the two previously known endpoint cases. By scaling, the coverage density $\xi(a,b)$ depends only on the ratio $b/a$, so the family may at first appear to provide a simple interpolation between triangles, corresponding to $b/a=0$, and rectangles, corresponding to $b/a=1$. This intuition is false. We prove that $r\mapsto \xi(1,r)$ for $r \in [0,1]$, is not monotone and, moreover, possesses a unique minimiser. Consequently, the least efficient particle in this family is a genuine trapezium rather than a triangle, despite the rectangle having the largest parking density among the two endpoint geometries. This non-monotonicity arises from the possibility, absent in the triangular case, of placing neighbouring trapeziums with their longer bases on the same side of the strip, thereby creating relatively large unusable gaps. Thus, rather than merely interpolating between the classical rectangular and triangular models, the trapezium family exhibits a genuinely new shape-dependent packing phenomenon.

Taken together, our results provide an exact first-order jamming law for the entire family of isosceles trapeziums, quantitative finite-size asymptotics, a common linear variance law for the two recursive substrate geometries, and a non-trivial optimisation of the parking density over the shape of the deposited particle.

\subsection{Preliminaries of Parking Problem}\label{sec:prem_park_prob}
Throughout this paper, we will always consider \emph{isosceles trapeziums}, so from this point onward, we will, for the sake of brevity, write \emph{trapezium}. Moreover, we will always let $0 \le b \le a<\infty$, such that $a>0$, throughout the paper, where $a$ and $b$ are the lengths of the bases of the trapeziums we are depositing in our model.

Next, we define the parallelogram $\calP_x$ and trapezium $\calT_x$ as shown in Figure~\ref{fig:shapes_of_strips}.

\begin{definition}\label{defn_tarp_parll}
    Define $\calT_x$ as the unique trapezium of height $1$, and bases of length $x+(a-b)/2$ and $x-(a-b)/2$, where the longer base is on the lower side. Similarly, we define $\calP_x$ as the unique parallelogram with two (parallel) sides of length $x$, height $1$ between those two sides, and acute angle $\vartheta=\arctan(2/(a-b))$ whenever $a>b$, and $\vartheta=\pi/2$ if $a=b$, where the left-most point is on the upper line.
\end{definition}

Note that $\calP_x$ and $\calT_x$ both have area $x$, and that the lengths of the upper and lower sides of $\calP_x$ (resp. $\calT_x$) sum to $2x$. Next, we will rigorously define the \emph{uniform depositing algorithm} ($\mathcal{UDA}$), used to deposit the trapezium $\calT_{(a+b)/2}$ on the strips $\calP_x$ and $\calT_x$. In the introduction, we presented short and intuitive descriptions of algorithms in the background of R\'enyi's car parking problem, and our parking model. However, to be able to find the exact value of the jamming limit, one has to be much more careful and precise. We follow the idea employed in~\cite{Renyi_OG} and~\cite{konheim1962parking} that was used to find R\'enyi's car parking constant by first developing a delay-differential relation for the expected value of the number of parked cars on a parking lot of a fixed size. The same idea was used in~\cite{triangles}. Even though we wrote that in R\'enyi's car parking problem one generates a uniform random variable on $(0, x)$, we clearly cannot park a car if we sample a point in $(x-1, x)$. This plays an important role, and has to be taken into account once one starts proving things. Clearly, in our model, we also won't be able to park trapeziums regardless of where we sample our point in the interval $(0, 2x)$. Notice that the situation is much more involved in our case since we have substrates of different shape, and our point can be chosen on two different sides of the substrate. We now carefully describe our uniform depositing algorithm.

\begin{figure}[ht]
\centering
\begin{subfigure}{0.49\textwidth}
    \centering
    \begin{tikzpicture}[scale = 0.65]
        \draw (1, 0) -- (10, 0);
	\draw (0, 1) -- (9, 1);
        \draw (1, 0) -- (0, 1);
        \draw (10, 0) -- (9, 1);
        \draw[decorate,decoration={brace,amplitude=5pt,raise=0pt,mirror},xshift=0pt, yshift = -3pt] (1, 0) -- (10, 0) node [midway,yshift=-13pt,xshift=0pt]{$x$};
        \draw[decorate,decoration={brace,amplitude=5pt,raise=0pt,mirror},xshift=0pt, yshift = 3pt] (9, 1) -- (0, 1) node [midway,yshift=13pt,xshift=0pt]{$x$};

        \draw (9.5, 0.5) arc[start angle=135, end angle=180, radius=0.7];
        \node at (9.1, 0.3) {$\vartheta$};
    \end{tikzpicture}
    \caption{$\calP_x$}
    \label{fig:strip_parallelogram}
\end{subfigure}
\begin{subfigure}{0.49\textwidth}
    \centering
    \begin{tikzpicture}[scale = 0.65]
        \draw (0, 0) -- (10, 0);
	\draw (1, 1) -- (9, 1);
        \draw (0, 0) -- (1, 1);
        \draw (10, 0) -- (9, 1);
        \draw[decorate,decoration={brace,amplitude=5pt,raise=0pt,mirror},xshift=0pt, yshift = -3pt] (0, 0) -- (10, 0) node [midway,yshift=-13pt,xshift=0pt]{$x+\frac{a-b}{2}$};
        \draw[decorate,decoration={brace,amplitude=5pt,raise=0pt,mirror},xshift=0pt, yshift = 3pt] (9, 1) -- (1, 1) node [midway,yshift=13pt,xshift=0pt]{$x-\frac{a-b}{2}$};
        \draw (9.5, 0.5) arc[start angle=135, end angle=180, radius=0.7];
        \node at (9.1, 0.3) {$\vartheta$};
    \end{tikzpicture}
    \caption{$\calT_x$}
    \label{fig:strip_trapezium}
\end{subfigure}
\caption{Illustrations of the parallelogram $\calP_x$ and trapezium $\calT_x$, which appear in the scheme of depositing trapeziums
.}
\label{fig:shapes_of_strips}
\end{figure}

\begin{definition}[Definition of $\mathcal{UDA}$]\label{defn:UDS_}
\textbf{Depositing on $\calP_x$:} The following scheme defines how we deposit the trapezium $\calT_{(a+b)/2}$ on $\calP_x$. Let $x>a$, and let $\tau \sim \mathcal{U}(0,2x-2a).$ 
If $\tau \in (0,x-a)$, we deposit the left-most point of $\calT_{(a+b)/2}$, $\tau$ into the upper baseline of $\calP_x$. If $\tau \in [x-a,2x-2a)$, we deposit the left-most point of $\calT_{(a+b)/2}$, $y=\tau+a-x$ into the lower baseline of $\calP_x$. (Illustrated in Figure~\ref{fig:trapezium_arriving_possible_strips}(a) \&~(b).)

\textbf{Depositing on $\calT_x$:} The following scheme defines how we deposit the trapezium $\calT_{(a+b)/2}$ on $\calT_x$. To do so, we will split into two cases: (i) if $(a+b)/2<
x< (3a-b)/2$, and (ii) $x \ge (3a-b)/2$. 
\begin{itemize}[leftmargin=3em]
    \item[{\textbf{(i)}}] Assume that $(a+b)/2
< x< (3a-b)/2$, in which case we can only deposit on the lower baseline of $\calT_x$. Hence, let $\tau \sim \mathcal{U}(0,x-(a+b)/2)$, and deposit $\calT_{(a+b)/2}$, $\tau$ into the lower baseline of $\calT_x$. (Illustrated in Figure~\ref{fig:trapezium_arriving_possible_strips}(e).)
\item[{\textbf{(ii)}}] Let $x\ge (3a-b)/2$, and $\tau \sim \mathcal{U}(0,2x-2a)$. If $\tau \in (0,x-(3a-b)/2)$, we deposit the left-most point of $\calT_{(a+b)/2}$, $\tau$ into the upper baseline of $\calT_x$. If $\tau \in [x-(3a-b)/2,2x-2a)$, we deposit the left-most point of $\calT_{(a+b)/2}$, $y=\tau-(x-(3a-b)/2)$ into the lower baseline of $\calT_x$. (Illustrated in Figure~\ref{fig:trapezium_arriving_possible_strips}(c) \&~(d).)
\end{itemize}

\textbf{Algorithm:} When a trapezium $\calT_{(a+b)/2}$ has been deposited on either $\calP_x$ or $\calT_x$, there will be two leftover strips. Conditional on the first deposition, the parking procedures on these two strips are defined to be independent copies of the $\mathcal{UDA}$, with the corresponding strip types and sizes. 
This procedure is continued recursively until there are no strips remaining which can fit $\calT_{(a+b)/2}$.
\end{definition}
\begin{figure}[!ht]
\begin{subfigure}[t]{0.4\textwidth}
    \centering
    \begin{tikzpicture}[scale = 0.55]
        \draw (1, 0) -- (10, 0);
	\draw (0, 1) -- (9, 1);
        \draw (1, 0) -- (0, 1);
        \draw (10, 0) -- (9, 1);
	\filldraw[fill=gray] (6, 0) -- (7,1) -- (4,1) -- (5, 0);
        \draw[decorate,decoration={brace,amplitude=5pt,raise=0pt,mirror},xshift=0pt, yshift = 3pt] (4, 1) -- (0, 1) node [midway,yshift=13pt,xshift=0pt]{$\tau$};
    \end{tikzpicture}
    \caption{Depositing $\calT_{(a+b)/2}$ on $\calP_x$ when $\tau\sim \mathcal{U}(0,2x-2a)$ and $\tau \in [0,x-a)$.}
    \label{fig:parallelogram_up_defn}
\end{subfigure} \hspace{2em}
\begin{subfigure}[t]{0.4\textwidth}
    \centering
    \begin{tikzpicture}[scale = 0.55]
        \draw (1, 0) -- (10, 0);
	\draw (0, 1) -- (9, 1);
        \draw (1, 0) -- (0, 1);
        \draw (10, 0) -- (9, 1);
	\filldraw[fill=gray] (5, 1) -- (6,0) -- (3,0) -- (4, 1);
        \draw[decorate,decoration={brace,amplitude=5pt,raise=0pt,mirror},xshift=0pt, yshift = -3pt] (1, 0) -- (3, 0) node [midway,yshift=-13pt,xshift=0pt]{$y$};
        \draw[decorate,decoration={brace,amplitude=5pt,raise=0pt,mirror},xshift=0pt, yshift = 3pt] (9, 1) -- (6, 1) node [midway,yshift=13pt,xshift=0pt]{$a$};
        \draw[decorate,decoration={brace,amplitude=5pt,raise=0pt,mirror},xshift=0pt, yshift = 3pt] (6, 1) -- (0, 1) node [midway,yshift=13pt,xshift=0pt]{$x-a$};
    \end{tikzpicture}
    \caption{Depositing $\calT_{(a+b)/2}$ on $\calP_x$ when $\tau\sim \mathcal{U}(0,2x-2a)$ and $\tau \in [x-a,2x-2a)$, with $x-a+y=\tau$.}
    \label{fig:parallelogram_down_defn}
\end{subfigure}
\vspace{1em}
\begin{subfigure}[t]{0.4\textwidth}
    \centering
    \begin{tikzpicture}[scale = 0.55]
        \draw (0, 0) -- (10, 0);
	\draw (1, 1) -- (9, 1);
        \draw (0, 0) -- (1, 1);
        \draw (10, 0) -- (9, 1);
	\filldraw[fill=gray] (5, 0) -- (6, 0) -- (7,1) -- (4,1);
        \draw[decorate,decoration={brace,amplitude=5pt,raise=0pt,mirror},xshift=0pt, yshift = 3pt] (4, 1) -- (1, 1) node [midway,yshift=13pt,xshift=0pt]{$\tau$};
    \end{tikzpicture}
    \caption{Depositing $\calT_{(a+b)/2}$ on $\calT_x$ when $x \ge (3a-b)/2$ and $\tau\sim \mathcal{U}(0,2x-2a)$, with $\tau \in (0,x-(3a-b)/2)$.}
    \label{fig:trapeze_up_defn}
\end{subfigure} \hspace{2em}
\begin{subfigure}[t]{0.4\textwidth}
    \centering
    \begin{tikzpicture}[scale = 0.55]
        \draw (0, 0) -- (10, 0);
	\draw (1, 1) -- (9, 1);
        \draw (0, 0) -- (1, 1);
        \draw (10, 0) -- (9, 1);
	\filldraw[fill=gray] (4, 1) -- (5,1) -- (6,0) -- (3, 0);
        \draw[decorate,decoration={brace,amplitude=5pt,raise=0pt,mirror},xshift=0pt, yshift = -3pt] (0, 0) -- (3, 0) node [midway,yshift=-13pt,xshift=0pt]{$y$};
        \draw[decorate,decoration={brace,amplitude=5pt,raise=0pt,mirror},xshift=0pt, yshift = 3pt] (6, 1) -- (1, 1) node [midway,yshift=13pt,xshift=0pt]{$x-\frac{3a-b}{2}$};
        \draw[decorate,decoration={brace,amplitude=5pt,raise=0pt,mirror},xshift=0pt, yshift = 3pt] (9, 1) -- (6, 1) node [midway,yshift=13pt,xshift=0pt]{$a$};
    \end{tikzpicture}
    \caption{Depositing $\calT_{(a+b)/2}$ on $\calT_x$ when $x \ge (3a-b)/2$, $\tau\sim \mathcal{U}(0,2x-2a)$, and $\tau \in [x-(3a-b)/2,2x-2a)$, with $y=\tau -(x-(3a-b)/2)$.}
    \label{fig:trapeze_down_defn}
\end{subfigure}
\vspace{1em}
\begin{subfigure}[t]{0.4\textwidth}
    \centering
    \begin{tikzpicture}[scale = 0.8]
        \draw (3.6, 0) -- (7.7, 0);
	\draw (4.6, 1) -- (6.7, 1);
        \draw (3.6, 0) -- (4.6, 1);
        \draw (7.7, 0) -- (6.7, 1);
	\filldraw[fill=gray] (5.5, 1) -- (6.5,1) -- (7.5,0) -- (4.5, 0);
        \draw[decorate,decoration={brace,amplitude=5pt,raise=0pt,mirror},xshift=0pt, yshift = -3pt] (3.6, 0) -- (4.5, 0) node [midway,yshift=-13pt,xshift=0pt]{$\tau$};
        \draw[decorate,decoration={brace,amplitude=5pt,raise=0pt,mirror},xshift=0pt, yshift = -3pt] (4.5, 0) -- (7.5, 0) node [midway,yshift=-13pt,xshift=0pt]{$a$};
        \draw[decorate,decoration={brace,amplitude=5pt,raise=0pt,mirror},xshift=0pt, yshift = 3pt] (6.7, 1) -- (4.6, 1) node [midway,yshift=13pt,xshift=0pt]{$x-\frac{a-b}{2}$};
    \end{tikzpicture}
    \caption{Depositing $\calT_{(a+b)/2}$ on $\calT_x$ when $x \in ((a+b)/2,(3a-b)/2)$ and $\tau \sim \mathcal{U}(0,x-(a+b)/2)$.}
    \label{fig:trapeze_down_defn__}
\end{subfigure}
\caption{Illustration of all the possible scenarios of the trapezium $\calT_{(a+b)/2}$ arriving on strips $\calP_x$ and $\calT_x$, defined by the scheme $\mathcal{UDA}$ in Definition~\ref{defn:UDS_}.}
\label{fig:trapezium_arriving_possible_strips}
\end{figure}


\begin{remark}
\noindent{\nf{\bf{(i)}}} Note that $x \ge (3a-b)/2$ is necessary to be able to deposit anything on the shorter, upper line in the case of trapezium $\calT_x$. 

\smallskip

\noindent{\nf{\bf{(ii)}}} We note that after depositing a trapezium $\calT_{(a+b)/2}$ on the lower baseline of a parallelogram $\calP_x$, we end up with two strips, where one is a trapezium $\calT_y'$ (for some appropriate $y$), where $\calT_y'$ is flipped upside down so the longer baseline is on top. For such objects, we note that the law of depositing a new trapezium $\calT_{(a+b)/2}$ on $\calT_y'$ is the same as depositing on $\calT_y$, i.e.\ as if the trapezium is flipped. Hence, we will do this without further note throughout the paper for simplicity. A similar case can happen for a flipped $\calP_x$, for which the same arguments hold. 

\smallskip

\noindent{\nf{\bf{(iii)}}}
By definition of the recursive $\mathcal{UDA}$, whenever the first deposited trapezium splits the current substrate into two child strips, the subsequent parking procedures on the two strips are carried out using independent copies of the depositing randomness. Consequently, conditional on the position and orientation of the first deposited trapezium, the eventual numbers of trapeziums deposited in the two child strips are independent, with distributions determined solely by the types and sizes of the respective child strips. We will use this conditional independence in the second-moment and variance recursions
below.

\smallskip

\noindent{\nf{\bf{(iv)}}}
A useful structural feature of the model is that the recursive decomposition closes within only two substrate types. As illustrated in Figure~\ref{fig:trapezium_arriving}, after the first accepted deposition on either $\calP_x$ or $\calT_x$, each of the two remaining components is, up to the reflection convention in~{\nf{\bf{(ii)}}}, again either a parallelogram of the form $\calP_y$ or a trapezium of the form $\calT_y$, for an appropriate $y\ge 0$. Thus, no additional substrate geometries are generated by the recursion.

This closure property is the basic reason that the model remains analytically tractable. It yields a closed system of recursive equations for the first moments, and the same two-type branching structure persists at the level of second moments. In particular, Figure~\ref{fig:trapezium_arriving} should be viewed not only as a geometric illustration of the possible depositions, but also as the recursive structure underlying the expectation and variance calculations developed below.
\end{remark}

For all $x>0$ we define the corresponding random variables $P_x$ and $T_x$ as
\begin{align}\label{eq:defn:P_x,T_x}
\begin{aligned}
     P_x &\coloneqq \text{the number of deposited trapeziums $\calT_{(a+b)/2}$ on the parallelogram $\calP_x$, under $\mathcal{UDA}$}, \\
    T_x &\coloneqq \text{the number of deposited trapeziums $\calT_{(a+b)/2}$ on the trapezium $\calT_x$, under $\mathcal{UDA}$}.
\end{aligned}
\end{align}
By convention, if there can be no trapeziums deposited, then we write $P_x,T_x\coloneqq 0$. E.g.\ if $x \in [0,a)$, then $P_x=0$, and if $x \in [0,(a+b)/2)$ then $T_x=0$. Likewise, we set $P_a = 1$ and $T_{(a+b)/2} = 1$. Moreover, define
\begin{equation}\label{eq:defn_eta}
    \mu(x)\coloneqq \E[P_x], \quad \nu(x) \coloneqq \E[T_x], \quad \text{ and }\quad \eta(x) \coloneqq \frac{\mu(x)+\nu(x)}{2}, \quad \text{ for all }x>0,
\end{equation} with the convention that $\eta(0)\coloneqq 0$. We emphasise that $\eta(x)$ itself does not have a direct physical interpretation as an expected parking number. Rather, this particular average is introduced because it leads to a convenient closed recursion. This distinction is immaterial in the large-substrate limit. As shown later in Lemma~\ref{lem:typewise_mean_error}, both $\mu(x)$ and $\nu(x)$ have the same asymptotic linear growth, and hence give the same limiting parking density. Throughout the paper, we will also use the notation $X_\calP(x)=P_x$ and $X_\calT(x)=T_x$.

\begin{remark}\label{rem:det_bound_eta}
    Note that we have the deterministic bounds
    \begin{equation}
        P_x, T_x \le \frac{2x}{a+b}, \quad \text{ for all $x >0$},
    \end{equation} since a rough upper bound is the total area $x$ of $\calP_x$ (resp. $\calT_x$) divided by the area of each deposited trapezium $\calT_{(a+b)/2}$. This gives us an upper bound for the growth of $\eta$, namely $\eta(x) \le 2x/(a+b)$.
\end{remark}

Throughout the paper, we use the notation $f(x) = \Oh(g(x))$ as $x \to \infty$ for positive functions $f,g$, if there exists a constant $C$ and $x_0$ such that $f(x) \le C g(x)$ for all $x \ge x_0$. Likewise, we write $f(x)=\oh(g(x))$ if $f(x)/g(x) \to 0$ as $x \to \infty$. Throughout the proofs, we will use $C$ as a generic constant, which can change from line to line.

\subsection{Main Results}
Define $\lambda(a,b)$ as the limit of the ratio between the expected number of deposited trapeziums $\calT_{(a+b)/2}$ in the jammed state, and the size $x$ of the substrate (as $x \to \infty$), i.e.\ $\lambda(a,b)\coloneqq \lim_{x \to \infty} \eta(x)/x$. Moreover, we define $\xi(a,b)$ as the limit (as $x\to \infty$) of the average substrate coverage density of deposited trapeziums $\calT_{(a+b)/2}$ in the jammed state, i.e.\ $\xi(a,b) \coloneqq \lambda(a,b)(a+b)/2$.

\begin{theorem}\label{thm:main_theorem}
Assume that $0 \le b \le a <\infty$ such that $a >0$, and let $\eta$ be defined as in~\eqref{eq:defn_eta}. Then, it follows that $\lim_{x \to \infty}\eta(x)/x = \lambda(a,b) \in (0,\infty)$, where
\begin{equation}\label{eq:parking_const_number_dep}
    \lambda(a,b)\coloneqq \frac{1}{2} \int_0^\infty (2+e^{-tb}-e^{-ta})\exp \bigg( \int_0^t \mleft(-2+e^{-s(a+b)/2}+e^{-as}\mright) \frac{\D s}{s}\bigg) \D t.
    \end{equation} 
    Moreover, as $x > 2a(e+1)$, it holds that
    \begin{equation}\label{eq:gen_roc_eta}
    \bigg|\eta(x) -\lambda(a,b) x +1-\lambda(a,b) \frac{a+b}{2}\bigg| 
    \le\begin{dcases}
      \mleft(\frac{2e}{x/a}\mright)^{x/a-3/2} \frac{e^{3/2} K_1}{\sqrt{\pi}}, &\text{ if }a=b,\\
      \mleft(\frac{2e}{x/a}\mright)^{x/a-5/2} \frac{3e^{5/2}(2K_1+K_2)}{\sqrt{\pi}}, &\text{ if }a>b.
    \end{dcases}
\end{equation} where $K_1 \coloneqq 1 + 2a(2/(a+b)+\lambda(a,b)) + \lambda(a,b)\cdot  (a+b)/2>0$ and $K_2 \coloneqq (2/(a+b)+\lambda(a,b))(a-b)/2\ge 0$. 
\end{theorem}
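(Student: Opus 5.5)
The plan has three stages, following the Rényi/Dvoretzky--Robbins route: an integral recursion for $\eta$, a Laplace transform to identify $\lambda(a,b)$, and a contraction argument on the remainder.

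\textbf{Step 1: a closed integral recursion for $\eta$.} I would first condition on the first accepted deposition in $\mathcal{UDA}$ (Definition~\ref{defn:UDS_}) and use the closure property in Remark~(iv) together with the reflection convention in (ii). Reading off Figure~\ref{fig:trapezium_arriving_possible_strips}, this gives one renewal equation for $\mu$ and one for $\nu$. Each equation expresses $(x-a)$ times the quantity as $(x-a)$ plus integrals of $\mu$ and $\nu$ over the child sizes. The child sizes are shifted by $0$, $(a-b)/2$, or $(a+b)/2$, depending on whether the deposited trapezium's slanted side is parallel to the substrate's slanted side.

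I expect the two equations to be coupled: a $\calP$ parent produces $\calP$--$\calP$ or $\calT$--$\calT'$ children, and a $\calT$ parent produces $\calP$--$\calP$ or $\calT$--$\calT$ children. Adding them should give a single equation for $\eta=(\mu+\nu)/2$ of the form
\begin{equation}
(x-a)\,\eta(x) = (x-a) + \int_0^{x-a}\bigl(\eta(y)+\eta(y+b)+\eta(y+\tfrac{a+b}{2})-\ldots\bigr)\,\D y ,
\end{equation}
modulo boundary corrections. Case (i) of the deposition on $\calT_x$ is what produces the boundary terms. This summation is the reason $\eta$, rather than $\mu$ or $\nu$ separately, is used. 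I expect this bookkeeping to be the main obstacle: every child size and shift must be exactly right, so that the kernel exponent becomes $-2+e^{-s(a+b)/2}+e^{-as}$ and the boundary factor becomes $2+e^{-tb}-e^{-ta}$. I would verify the result against the two endpoint cases, $a=b$ (Rényi, \eqref{eq:renyi_const}) and $b=0$ (\eqref{eq:triangle_const}).

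\textbf{Step 2: Laplace transform and the constant.} Set $F(s)=\int_0^\infty e^{-sx}\eta(x)\,\D x$, which is finite by the bound in Remark~\ref{rem:det_bound_eta}. Multiplication by $x$ becomes $-\tfrac{\D}{\D s}$, shifts become exponential factors, and integration becomes division by $s$. The recursion therefore turns into a first-order linear ODE in $s$. Solving it with the integrating factor $\exp\bigl(\int_0^t(-2+e^{-u(a+b)/2}+e^{-au})\,\D u/u\bigr)$ expresses $s^2F(s)$ as an integral that converges, as $s\downarrow 0$, to the right-hand side of \eqref{eq:parking_const_number_dep}. The constant term $1-\lambda(a,b)\frac{a+b}{2}$ comes from the next order of the expansion of $F$ at $0$. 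A Tauberian step, using monotonicity-type control from the linear bound, is only heuristic here; rigour will come from Step 3, which gives convergence directly.

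\textbf{Step 3: error bound à la Dvoretzky--Robbins.} Define $\delta(x)=\eta(x)-\lambda x+1-\lambda\frac{a+b}{2}$. Because $\lambda x-1+\lambda\frac{a+b}{2}$ solves the inhomogeneous recursion exactly (this is a check of the constants from Step 2), $\delta$ satisfies the homogeneous version,
\begin{equation}
(x-a)\,\delta(x)=\int_0^{x-a}\bigl(\text{sum of shifted }\delta\text{'s}\bigr)\,\D y ,
\end{equation}
up to boundary terms on an initial window.

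Put $M(x)=\sup_{y\le x}|\delta(y)|$. On the initial range $x\le 2a(e+1)$, the deterministic bound $\eta(x)\le 2x/(a+b)$ gives $|\delta|\le K_1$. On a block of length $a$, the homogeneous recursion gives $|\delta(x)|\le \frac{2a}{x-a}M(x-a)$, since the averaging loses a factor of order $2a/x$ per block. Iterating over $x/a$ blocks gives a product $\prod_k \frac{2}{k}$, and Stirling's formula $k!\ge\sqrt{2\pi k}(k/e)^k$ yields $\bigl(\tfrac{2e}{x/a}\bigr)^{x/a-3/2}\tfrac{e^{3/2}K_1}{\sqrt\pi}$. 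The condition $x>2a(e+1)$ ensures that each factor is less than $1$, so the iteration contracts.

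When $a>b$, the extra shift by $(a-b)/2$ contributes the additional term $K_2$ and costs one block in the iteration. This explains the exponent $x/a-5/2$ and the constant $3e^{5/2}(2K_1+K_2)$ in \eqref{eq:gen_roc_eta}. Finally, since $\delta(x)\to 0$, we get $\eta(x)/x\to\lambda(a,b)$, and $\lambda(a,b)>0$ because the integrand in \eqref{eq:parking_const_number_dep} is positive.
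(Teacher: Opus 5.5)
Your outline has the same architecture as the paper: an integral recursion for $\eta$, a Laplace transform solved with an integrating factor, and a Dvoretzky--Robbins iteration. However, Step 3 has a genuine gap. Because you explicitly defer the rigorous proof of $\eta(x)/x\to\lambda(a,b)$ to Step 3, the first claim of the theorem is not established either.

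The one-step bound $|\delta(x)|\le\frac{2a}{x-a}M(x-a)$ does not follow from the homogeneous recursion. That recursion averages your $\delta$ over windows of length about $x$, namely $[0,x-a]$ and $[\frac{a-b}{2},x-\frac{a+b}{2}]$, and divides by $x-a$. Taking absolute values therefore gives only $|\delta(x)|\le 2\sup|\delta|$, with no small factor. In fact no estimate of this kind can follow from the homogeneous equation alone. Write it for $f=\eta+1$ as $f(x+a)=\frac1x\bigl(\int_0^x f+\int_{(a-b)/2}^{x+(a-b)/2}f\bigr)$. It is solved exactly by $A\bigl(x+\frac{a+b}{2}\bigr)$ for \emph{every} $A\in\R$. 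So the affine part solving the recursion is not ``a check of the constants from Step 2''. For any $\epsilon\neq0$, the function $\delta(x)+\epsilon\bigl(x+\frac{a+b}{2}\bigr)$ satisfies the same homogeneous equation while growing linearly. Moreover, the running supremum $M(x)=\sup_{y\le x}|\delta(y)|$ is non-decreasing, so iterating against it cannot produce decay at all; you need suprema over windows $[\varpi,\varpi+a]$.

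What is missing is the Dvoretzky--Robbins anchoring mechanism. Subtracting the recursion at two points $0<x\le y$ gives, for $f^*\coloneqq f-A\bigl(\cdot+\frac{a+b}{2}\bigr)$,
\[
f^*(y+a)=\frac{x}{y}f^*(x+a)+\frac1y\Bigl(\int_x^y f^*(t)\D t+\int_{x+\frac{a-b}{2}}^{y+\frac{a-b}{2}}f^*(t)\D t\Bigr),
\]
and here the integrals run over windows of length at most $a$.

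\begin{itemize}
\item Take $A=I_x$ (resp.\ $A=S_x$), the infimum (resp.\ supremum) of $f(c)/(c+\frac{a+b}{2})$ over $c\in[x,x+a]$. The identity then propagates signs and shows that $I_x$ is non-decreasing and $S_x$ is non-increasing.
\item A direct estimate gives $S_x-I_x=\Oh(1/x)$. Hence both converge to a common $\ov\lambda$, with $I_x\le\ov\lambda\le S_x$ for every $x$.
\item By continuity of $\eta$ on $(a,\infty)$, $f^*$ with $A=\ov\lambda$ has a zero $x_\varpi$ in every window $[\varpi,\varpi+a]$.
\item Anchoring the identity at $x_\varpi$ removes the term $\frac{x}{y}f^*(x+a)$. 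This yields $\mathfrak D(\varpi)\le\frac{a}{\varpi-a}\bigl(\mathfrak D(\varpi-a)+\mathfrak D(\varpi-\frac{a+b}{2})\bigr)$ for $\mathfrak D(\varpi)=\sup_{[\varpi,\varpi+a]}|f^*|$.
\item The two-branch (binomial) iteration of this inequality, with terminal windows reaching up to $2a+(k-r)\frac{a-b}{2}$, is what produces $K_1$, $K_2$ and the extra power when $a>b$. Note that $K_1$ bounds $|f^*|$ only on $[0,2a]$, not on $[0,2a(e+1)]$ as you state.
\end{itemize}

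This gives the rate with slope $\ov\lambda$, so you still need $\ov\lambda=\lambda(a,b)$. One option is Karamata's Tauberian theorem applied to Step 2; this needs $\eta$ non-decreasing, which the paper proves separately. Closer to your intent is the Abelian direction: $\eta(x)=\ov\lambda x+\Oh(1)$ implies $s^2F(s)\to\ov\lambda$, and comparison with Step 2 forces $\ov\lambda=\lambda(a,b)$.

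Finally, Step 1 is simpler than you expect. The two recursions sum exactly to $\eta(x)=1+\frac{1}{x-a}\bigl(\int_0^{x-a}\eta+\int_0^{x-\frac{a+b}{2}}\eta\bigr)$ for all $x>a$; there is no $\eta(y+b)$ term. The factor $2+e^{-tb}-e^{-ta}$ comes from the constant $1$ together with the initial value $\eta=\frac12$ on $[\frac{a+b}{2},a)$.
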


\begin{remark}
\textbf{(i)} The rate of convergence in~\eqref{eq:gen_roc_eta} depends on $(a,b)$ in a genuinely structural way, not merely through a multiplicative constant. The dependence on $b$ is essentially through whether $a=b$ or not: the case $a=b$ yields a polynomially faster rate. The parameter $a$ serves as a time scale. Indeed, after the rescaling $x\mapsto ax$, the bound of $|\eta(ax)-\lambda(a,b) ax+1-\lambda(a,b)(a+b)/2|$ depends on $a$ only through a multiplicative constant.

Moreover, when $a=b$ the Dvoretzky--Robbins bound~\cite{Dvoretzky_Robbins} is recovered from~\eqref{eq:gen_roc_eta} with explicit constants. Note, however, that if $a=b\neq 1$ the rate still depends on $a$ through a time-scale, and not just through a multiplicative $a$-dependent constant, a point not noted in~\cite{Dvoretzky_Robbins}.

\smallskip

\textbf{(ii)} From the proof of Lemma~\ref{lem:ROC_first_order} (which yields the rate in Theorem~\ref{thm:main_theorem}), define
\begin{equation}\label{eq_defn_I_x,S_x_prime}
I_x \coloneqq \inf_{x \le c \le x+a} \frac{\eta(c)+1}{c+(a+b)/2}, \quad \text{ and }
\quad
S_x \coloneqq \sup_{x \le c \le x+a} \frac{\eta(c)+1}{c+(a+b)/2},
\quad \text{ for all } x\ge 0.
\end{equation}
By~\eqref{eq:bounded_over_lambda_ixsx} and the proof of Theorem~\ref{thm:main_theorem}, we have
 $I_x \le \lambda(a,b) \le S_x$ for all $x\ge 0$, and $\lim_{x\to\infty} I_x = \lambda(a,b) = \lim_{x\to\infty} S_x$. Thus, $\lambda(a,b)$ can be arbitrarily well approximated by taking $x$ large. This is often preferable when a closed form for $\lambda(a,b)$ is intractable, since $I_x$ and $S_x$ depend only on the explicit form of $\eta$.
\end{remark}

Sometimes it is more convenient to work with $\xi(a,b)$ instead of $\lambda(a,b)$, since it has an important scaling property, as considered in the ensuing corollary.

\begin{corollary}\label{cor:parking_constant}
    For the parking constant $\xi(a,b)= \lambda(a,b)(a+b)/2$, it follows that
\begin{equation}
    \xi(a,b)
    =
    \frac{1+\frac{b}{a}}{4} \int_0^\infty \bigg(2+e^{-t\cdot \frac{b}{a}}-e^{-t}\bigg)\exp \bigg( \int_0^t \mleft(-2+e^{-s(1+\frac{b}{a})/2}+e^{-s}\mright) \frac{\D s}{s}\bigg) \D t
    =
    \xi\big(1,\tfrac{b}{a}\big).
\end{equation} Thus, we may without loss of generality assume that $a=1$ and $b \in [0,1]$ when working with $\xi(a,b)$.
\end{corollary}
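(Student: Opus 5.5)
The corollary follows from the integral formula \eqref{eq:parking_const_number_dep} of Theorem~\ref{thm:main_theorem} by a change of variables; no probabilistic argument is needed. Write $r \coloneqq b/a\in[0,1]$. By definition,
\begin{equation}
\xi(a,b)=\frac{a+b}{2}\,\lambda(a,b)=\frac{a+b}{4}\int_0^\infty \bigl(2+e^{-tb}-e^{-ta}\bigr)\exp\Bigl(\int_0^t\bigl(-2+e^{-s(a+b)/2}+e^{-as}\bigr)\frac{\D s}{s}\Bigr)\D t.
\end{equation}

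First treat the inner integral. Substitute $s=u/a$; the measure $\D s/s$ is scale invariant, so $\D s/s=\D u/u$. The upper limit becomes $at$, and the integrand becomes $-2+e^{-u(1+r)/2}+e^{-u}$. The integrand is $\Oh(u)$ near $0$, so the integral converges there and the substitution is legitimate.

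Next substitute $t=\tau/a$ in the outer integral, so $\D t=\D\tau/a$. The prefactor becomes $2+e^{-\tau r}-e^{-\tau}$, and the exponent becomes $\int_0^\tau(\cdots)\D u/u$ with the integrand obtained in the previous step. The Jacobian $1/a$ combines with $(a+b)/4$ to give $(1+r)/4$. This is exactly the stated expression.

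Setting $a=1$, $b=r$ in the original formula gives the same expression, so $\xi(a,b)=\xi(1,b/a)$. The only point needing care is that both integrals converge absolutely, so the substitutions are justified. Absolute convergence of the outer integral is part of Theorem~\ref{thm:main_theorem}, since $\lambda(a,b)\in(0,\infty)$. One can also see it directly: the exponent behaves like $-2\log t$ for large $t$ when $b>0$, and like $-\log t$ when $b=0$. In the latter case the prefactor $2+e^{-tb}-e^{-ta}$ is also handled by the theorem's finiteness. There is no real obstacle here.
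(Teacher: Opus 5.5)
Your substitution $s=u/a$, $t=\tau/a$ in the formula for $\lambda(a,b)$ from Theorem~\ref{thm:main_theorem} is correct, and it is the argument behind the corollary: the paper gives no separate proof and treats the corollary as an immediate consequence of that formula. One side remark is wrong but harmless: for $b=0$ the exponent still behaves like $-2\log t$, because $(a+b)/2\ge a/2>0$ means both exponentials decay, and a $-\log t$ behaviour would in fact make the outer integral diverge; moreover, no convergence check is needed at all, since the outer integrand is positive and the inner one is nonpositive, so both changes of variables hold in $[0,\infty]$.
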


Theorem~\ref{thm:main_theorem} describes the first-order behaviour of the parking process by identifying the asymptotic mean number of deposited trapeziums and giving a quantitative estimate of its finite-size correction. It is natural to ask next how strongly the number of deposited trapeziums fluctuates around this mean. Besides being of independent interest, the variance determines the natural scale of the fluctuations and is therefore a necessary first step towards a central limit theorem for the parking process.

The recursive structure of the model makes this question particularly interesting. As explained above, a deposition produces child substrates of two different types, $\calP_x$ and $\calT_x$, and the corresponding parking variables $P_x$ and $T_x$ satisfy different local recursions. There is therefore no a priori reason to expect their second-order asymptotics to coincide. Nevertheless, we show that the distinction between the two substrate geometries disappears at leading order: both variances grow linearly with $x$, with the same strictly positive asymptotic variance density $\lambda_2(a,b)$. Moreover, we obtain quantitative bounds on the corresponding remainder terms. As for the first moment, these bounds distinguish sharply between rectangles, $a=b$, and genuine trapeziums, $a>b$, and their rates depend explicitly on the geometry of the deposited particle. When $a=b=1$, the result recovers the second-order asymptotic estimate of Dvoretzky and Robbins~\cite{Dvoretzky_Robbins}.

\begin{theorem}\label{thm:variance_asymptotics}
There exist finite constants $\lambda_2(a,b)>0$ and $C_{a,b}>0$ such that, 
\begin{equation}
    \max_{s \in \{\calP,\calT\}}\mleft| \Var(X_s(x))-\lambda_2(a,b)\mleft(x+\frac{a+b}{2} \mright)\mright| \le C_{a,b} \mathcal R_2(x),
\end{equation} for all $x>(3a-b)/2$, where

\begin{equation*}
    \mathcal R_2(x)
\coloneqq
\begin{dcases}
    \mleft(\dfrac{4e}{x/a}\mright)^{x/a-4},
    &\text{if }a=b,\\
    \mleft(\dfrac{4e}{x/a}\mright)^{x/a-(5+(a+b)/(2a))},
    &\text{if }a>b.
\end{dcases}
\end{equation*}
\end{theorem}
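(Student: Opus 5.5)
The plan is to follow the Dvoretzky--Robbins second-moment strategy, applied to the two-type system. First I would write the recursions. By Remark (iii), conditional on the first deposition the counts in the two child strips are independent. This gives renewal-type integral equations for the second moments $m_\calP(x)=\E[P_x^2]$ and $m_\calT(x)=\E[T_x^2]$, in the same form as the mean equations for $\mu,\nu$. Each child contributes its own second moment, plus a cross term $2\,\E[\text{left}]\E[\text{right}]$, plus the $+1$ from the first trapezium.

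Next I would pass to the centred quantities $v_s(x)=\Var(X_s(x))$. Subtracting the squared mean recursion, one obtains for $s\in\{\calP,\calT\}$ a linear system
\[
v_s(x)=\frac{1}{2x-2a}\int \bigl(v_{s_1}(y)+v_{s_2}(x-\cdot-y)\bigr)\,\mathrm{d}y + g_s(x).
\]
Its kernel is the same as in the mean system. The forcing $g_s$ is the conditional variance of the conditional mean: the average of $(1+\text{mean}_{\text{left}}+\text{mean}_{\text{right}}-\text{mean}_s(x))^2$.

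Then I would feed in Lemma~\ref{lem:typewise_mean_error} and Theorem~\ref{thm:main_theorem}. Both give $\mu(y),\nu(y)=\lambda y+c_s+\Oh(\text{super-exponential})$. Inserting these shows $g_s(x)=\gamma+\Oh(\cdot)$ for an explicit constant. The linear parts cancel exactly because the child lengths sum to $x$ minus a constant, so the forcing tends to a constant up to super-exponentially small errors.

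To decouple the two types, I would again form the average $w=(v_\calP+v_\calT)/2$, as was done with $\eta$. This should give a closed single-type delay equation of exactly the form solved for $\eta$, namely
\[
(x-a)w(x)=\int_0^{x-a}\bigl(2w(y)+\dots\bigr)\,\mathrm{d}y+(x-a)\,\bar g(x).
\]
I would then rerun the argument of Lemma~\ref{lem:ROC_first_order} with the constant forcing $\bar g$. The key steps there are: define $I_x,S_x$ for $(w(c)+\kappa)/(c+(a+b)/2)$, show monotone squeezing, and obtain the Laplace-transform identification of $\lambda_2$. This gives $w(x)=\lambda_2(x+(a+b)/2)+\Oh(\mathcal R_2)$. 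The hypothesis $x>(3a-b)/2$ is needed so that the $\calT$-recursion is in regime (ii).

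The difference $v_\calP-v_\calT$ would be handled separately. It satisfies a recursion whose forcing is itself only super-exponentially small, plus a term involving $w$ that cancels at the linear level. Iterating that recursion gives $|v_\calP-v_\calT|\le C\mathcal R_2$. Positivity of $\lambda_2$ would come from comparison: for large $x$ there is a positive-probability event of some fixed window configuration that changes the count by one, with linearly many independent windows. Equivalently, $\gamma>0$ gives $\lambda_2>0$ via the integral formula.

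The main obstacle is the bookkeeping of the error rate. The Dvoretzky--Robbins iteration loses a factor $\sim 2e/(x/a)$ per unit step. The second moment squares the errors from the mean, giving base $4e$, and there are extra polynomial losses. These come from the forcing errors (which carry $\Oh((2e a/x)^{x/a-5/2})$ from Theorem~\ref{thm:main_theorem}), from the additional delay $(a+b)/2$ in the $\calT$-branch, and from integrating the forcing. Together they should produce the exponent shift $x/a-(5+(a+b)/(2a))$ in the case $a>b$. I would track these by an induction on intervals $[ka,(k+1)a]$, bounding $\sup|w(x)-\lambda_2(x+(a+b)/2)|$ on each interval by $C\prod_{j\le k}(4/j)$ times a polynomial factor, and then converting to $\mathcal R_2$ by Stirling's formula.
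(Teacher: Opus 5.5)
Your architecture matches the paper's: same-kernel recursions from conditional independence, averaging the two types, contracting the difference, then a Dvoretzky--Robbins squeeze and iteration. But your identification of the forcing is wrong, and two steps rest on it.

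\textbf{The forcing tends to zero, not to a constant.} Write $m_s=L+e_s$ with $L(x)=\lambda x+\lambda\varsigma-1$. Since $L(x)=1+L(u)+L(x-\varsigma-u)$, both the slopes and the constants cancel, so the conditional-mean deviation is exactly $e_{s_1}(u)+e_{s_2}(x-\varsigma-u)-e_s(x)$. This is super-exponentially small when both children are long, but $\Oh(1)$ when one child is short; for example $e_{\calP}(u)=1-\lambda(u+\varsigma)$ for $u<a$. Hence
\[
g_{\calP}(x)=\frac{1}{x-a}\Bigl(\int_0^{x-a}e_{\calP}^2\D t+\int_\delta^{x-\varsigma}e_{\calT}^2\D t\Bigr)+q_{\calP}(x)-e_{\calP}(x)^2=\frac{K}{x-a}+\Oh(\mathcal R_2(x)),\qquad K>0,
\]
and similarly for $\calT$. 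This matters for the form of the answer. A genuine constant forcing $\gamma\neq0$ would give $w(x)=\lambda_2(x+\varsigma)-\gamma+\dots$, because the affine solutions of the forced equation are $\rho(x+\varsigma)-\gamma$; that contradicts the statement you want.

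What actually closes the argument is that a $K/x$ term cancels exactly in the comparison identity $F(y+a)-\frac{x}{y}F(x+a)$. After that, the squeeze of Lemma~\ref{lem:ROC_first_order} must be redone with the $\Oh(\mathcal R_2)$ perturbation. There $I_x,S_x$ are only approximately monotone: $I_z\ge I_x-\varepsilon_x$ with $\varepsilon_x\lesssim \mathcal R_2(x)/x$, summed along $x+j\varsigma$. This is Lemma~\ref{lem:scalar_DR_variance}. The paper centres at $L$ rather than at the exact mean, so its forcing is only the cross terms $q_s=\Oh(\mathcal R_2)$, plus a $C_0/x$ from $\int_0^\delta\Delta_B$ removed by the same cancellation. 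It then uses $\Var(X_s)=B_s-e_s^2$.

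\textbf{Your positivity argument fails for the same reason.} The Laplace--Tauberian identification belongs to Lemma~\ref{lem:parking_constant_a,b}, not Lemma~\ref{lem:ROC_first_order}, and it needs an explicit forcing. Here $g_s$ involves the non-explicit $e_s$, the $K/x$ term drops out of the squeeze, and $\lambda_2$ emerges only as the common limit $I_\infty=S_\infty$. Whether $\lambda_2$ has an integral representation is left open. So there is no formula turning ``$\gamma>0$'' into ``$\lambda_2>0$''. The ``linearly many independent windows'' heuristic would need a genuine decorrelation argument.

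The missing idea is much cheaper. By total variance, $\Var(P_x)\ge \Var(\E[P_x\mid\tau])=g_{\calP}(x)\ge c/x$ for large $x$; this is where $K>0$ is used. The paper instead uses $\E[\Var(P_x\mid\tau)]\ge A_0/(x-a)$, from an interval on which some $\Var(X_s)>0$. In either form, $\lambda_2=0$ would force $\Var(P_x)=\Oh(\mathcal R_2(x))=\oh(x^{-1})$, a contradiction.
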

From Theorem~\ref{thm:variance_asymptotics}, it equivalently holds that
\[
    \Var(P_x)
    =
    \lambda_2(a,b)x
    +
    \lambda_2(a,b)\frac{a+b}{2}
    +
    \Oh(\mathcal R_2(x)), \quad \text{as }x \to \infty,
\]
and the same asymptotic expansion holds for $\Var(T_x)$.

\begin{remark}
\noindent{\bf{(i)}} When $a=b=1$, the two types of strips coincide, and
Theorem~\ref{thm:variance_asymptotics} becomes
\[
    \Var(P_x)
    =
    \lambda_2(1,1)x+\lambda_2(1,1)
    +
    \Oh\mleft(
        \mleft(\frac{4e}{x}\mright)^{x-4}
    \mright),
\]
which is precisely the form of the second-order estimate of Dvoretzky and Robbins~\cite[Eq.~(1.4)]{Dvoretzky_Robbins}. For a general rectangle, that is, when $a=b$, the corresponding statement follows after the natural rescaling $x\mapsto x/a$.

\smallskip

\noindent{\bf{(ii)}} The remainder in Theorem~\ref{thm:variance_asymptotics} exhibits a non-trivial dependence on the geometry of the deposited trapezium. To make this explicit, write $y=x/a$ and $r=b/a \in [0,1]$. Then the remainder has the form
\[
\mathcal R_2(x)
=
\1_{\{r=1\}}\mleft(\frac{4e}{y}\mright)^{y-4}+ \1_{\{r \in [0,1)\}}
\mleft(\frac{4e}{y}\mright)^{y-(11+r)/2}.
\]
Thus, $a$ determines the natural length scale $x/a$, whereas, for genuine trapeziums, the ratio $b/a$ enters directly into the polynomial correction to the super-exponential rate. In particular, for $r<1$, comparison with the rectangular remainder gives $\mleft(4e/y\mright)^{y-(11+r)/2}
=
\mleft(y/(4e)\mright)^{(3+r)/2}
\mleft(4e/y\mright)^{y-4}$. Hence, although both cases have the same leading super-exponential behaviour, the bound for a genuine trapezium loses a polynomial factor of order $y^{(3+r)/2}$ relative to the rectangular case. Most notably, this loss persists as $r\uparrow1$: for trapeziums whose two bases are arbitrarily close in length, the loss approaches a factor of order $y^2$, whereas at the exact rectangular value $r=1$ the sharper exponent $y-4$ is recovered. The case $a=b$ is therefore not merely the limiting member of the trapezoidal family at the level of our error estimate; its additional symmetry produces an improved remainder rate.

\smallskip

\noindent{\nf{\bf{(iii)}}} Locally, let $P^{(a,b)}_x$ (resp. $T_x^{(a,b)}$) be the random variable $P_x$ (resp. $T_x$) associated with the lengths $a$ and $b$. Then, under horizontal scaling $a^{-1}$, it follows that $P_x^{(a,b)}\eqd P_{x/a}^{(1,b/a)}$ and $T_x^{(a,b)}\eqd T_{x/a}^{(1,b/a)}$. Hence, comparing the variance asymptotics, we get that $\lambda_2(a,b)=a^{-1}\lambda_2(1,b/a)$, therefore implying that $\max_{s \in \{\calP,\calT\}}| \Var(X_s(x))-\lambda_2(1,b/a)(x/a+(1+b/a)/2 )| \le C_{a,b} \mathcal R_2(x)$, where for $a>b$, we get $\mathcal{R}_2(x)=(4e/(x/a))^{x/a-(11+b/a)/2}$, making it clear that the relevant ratios are $x/a$ and $b/a$.
\end{remark}

\subsection{Worst trapezium}
A relevant question, having Corollary~\ref{cor:parking_constant} in mind, is whether there exists a ``worst'' (resp.\ ``best'') ratio $b/a$ for which the average coverage density $\xi(1,b/a)$ is smallest (resp.\ largest). As we see in Lemma~\ref{lem:worst_trap_exists_uniq}, there exists a unique worst ratio, minimising the average coverage density $x \mapsto \xi(1,x)$ for $x \in [0,1]$, and surprisingly, we find that $x \mapsto \xi(1,x)$ is not strictly increasing. Moreover, since $x \mapsto \xi(1,x)$ only has one extremum point, which is a minimum on $(0,1)$, the ``best'' ratio must therefore be at $0$ or $1$, values which are already known from~\cite{Renyi_OG} and~\cite{triangles}, concluding that the ``best'' case is $b/a=1$.
\begin{lemma}\label{lem:worst_trap_exists_uniq}
The function $x \mapsto \xi(1,x)$ defined on $[0,1]$ has a unique minimiser $x^\ast \in (0,1)$ such that $\xi(1,x^*)=\inf_{x \in [0,1]} \xi(1,x)$.
\end{lemma}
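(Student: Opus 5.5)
We work throughout with the explicit formula of Corollary~\ref{cor:parking_constant}. Write $r=b/a\in[0,1]$ and $c=(1+r)/2$, and set
\[
F(t,r)\coloneqq \exp\bigg(\int_0^t \big(-2+e^{-cs}+e^{-s}\big)\frac{\D s}{s}\bigg),\qquad
\xi(1,r)=\frac{1+r}{4}\int_0^\infty \big(2+e^{-tr}-e^{-t}\big)F(t,r)\,\D t .
\]

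\textbf{Step 1: regularity.} First we show that $r\mapsto\xi(1,r)$ is continuous on $[0,1]$ and $C^2$ on $(0,1)$, with derivatives obtained by differentiating under the integral. The integrand $(-2+e^{-cs}+e^{-s})/s$ is bounded near $s=0$. For large $t$ the exponent behaves like $-2\log t+O(1)$, so $F(t,r)\le Ct^{-2}$ uniformly in $r$. Derivatives in $r$ only introduce factors that are polynomial in $t$, namely $\partial_r\log F=-\tfrac12\int_0^t e^{-cs}\,\D s=-(1-e^{-ct})/(2c)$, which is bounded. Dominated convergence then applies. Continuity on the compact interval $[0,1]$ gives existence of a minimiser.

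\textbf{Step 2: the minimiser is interior.} The value $\xi(1,1)=\theta_R$ comes from \eqref{eq:renyi_const}. The value $\xi(1,0)$ should reduce to $\eta_\triangle$ from \eqref{eq:triangle_const} after a change of variables. We will check this; if the constants differ because of normalisation, we simply evaluate the integral directly. Since $\eta_\triangle<\theta_R$, it suffices to show $\partial_r\xi(1,0^+)<0$, which forces the minimum into $(0,1)$. Differentiating gives
\[
\partial_r\xi(1,r)=\frac{\xi(1,r)}{1+r}+\frac{1+r}{4}\int_0^\infty\Big(-te^{-tr}-(2+e^{-tr}-e^{-t})\frac{1-e^{-ct}}{2c}\Big)F(t,r)\,\D t.
\]
At $r=0$ the term $-\int_0^\infty tF(t,0)\,\D t$ is problematic. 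Since $F(t,0)\sim Ct^{-2}$, this integral diverges to $-\infty$. Consequently $\partial_r\xi(1,r)\to-\infty$ as $r\downarrow0$, and this is exactly the mechanism that makes $\xi$ decrease initially. To make the argument rigorous we need the asymptotic lower bound $F(t,0)\ge c_0t^{-2}$ and a bound on the remaining terms, which are integrable uniformly in $r$.

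\textbf{Step 3: uniqueness, the main obstacle.} Once we know $\xi$ decreases near $0$ and that $\xi(1,1)>\xi(1,0)$, uniqueness follows if $\partial_r\xi(1,\cdot)$ has exactly one zero on $(0,1)$. The first approach is to prove strict convexity, $\partial_r^2\xi>0$ on $(0,1)$. We compute $\partial_r^2\xi$ explicitly under the integral and try to show that the dominant contribution $\int t^2e^{-tr}F\,\D t$, which is positive and large, controls the remaining terms.

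If convexity cannot be established globally, the fallback has two parts. First, handle a neighbourhood $(0,r_0)$ analytically through the blow-up of $\partial_r\xi$. Second, on $[r_0,1]$, show that $\partial_r\xi$ has a single sign change. This can be done by certified numerical evaluation: we bound the integrands with explicit tail estimates $F\le Ct^{-2}$ beyond a cutoff, and verify on a finite grid, using a Lipschitz bound on $\partial_r^2\xi$ obtained from Step 1, that $\partial_r\xi$ is increasing.

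The rigorous sign control in this step is where I expect the real difficulty.
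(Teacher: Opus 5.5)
Your Steps 1 and 2 are sound and close to the paper's. The divergence $\partial_r\xi(1,r)\to-\infty$ as $r\downarrow0$, driven by $\int t e^{-tr}F(t,r)\,\D t$ with $F(t,r)\ge c_0t^{-2}$, is exactly the paper's argument. Using $\xi(1,0)=\eta_\triangle<\theta_R=\xi(1,1)$ is an acceptable replacement for the paper's exact proof that $\partial_r\xi(1,1)>0$ (the substitution $t\mapsto2t$ does give $\xi(1,0)=\eta_\triangle$).

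The gap is Step 3, and your primary route cannot work, because $\xi(1,\cdot)$ is not convex on $(0,1)$. Set $A_r(t)\coloneqq2+e^{-rt}-e^{-t}$ and $H_r(t)\coloneqq A_r(t)(1-e^{-t})-t(e^{-rt}+e^{-t})$. Subtracting $\int_0^\infty\partial_t\bigl(tA_r(t)F(t,r)\bigr)\D t=0$ from your formula for $\partial_r\xi$ gives $4\partial_r\xi(1,r)=\int_0^\infty H_r(t)F(t,r)\,\D t$. Differentiating once more at $r=1$, using $\partial_r\log F=-(1-e^{-ct})/(2c)$, gives
\[
4\,\partial_r^2\xi(1,r)\big|_{r=1}=\int_0^\infty\bigl(t^2e^{-t}-(1-e^{-t})^2\bigr)F(t,1)\,\D t<0 .
\]
The sign holds because $te^{-t/2}<1-e^{-t}$, i.e.\ $t<2\sinh(t/2)$, for every $t>0$. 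The integral formula is smooth in $r$ near $1$, so $\xi(1,\cdot)$ is strictly concave on some interval $(1-\varepsilon,1]$. Your fallback, which certifies on $[r_0,1]$ that $\partial_r\xi$ is increasing, fails for the same reason. A correctly designed computer-assisted sign check might still work. However, it would need an explicit $r_0<x^\ast\approx0.08$ with quantitative control of the logarithmic singularity of $\partial_r\xi$ at $0$, and none of this is supplied. As it stands, uniqueness is unproved.

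The idea you are missing is that global monotonicity of $\partial_r\xi$ is unnecessary. It suffices to prove transversality at zeros: $\partial_r\xi(1,r)=0\Rightarrow\partial_r^2\xi(1,r)>0$. A continuous function on $(0,1]$ that tends to $-\infty$ at $0$, is positive at $1$, and has positive derivative at each of its zeros has exactly one zero.

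The paper proves this transversality as follows.
\begin{itemize}
\item It writes $F(t,r)=\E[e^{-tS_r}]$ with $S_r=D_1+\tfrac{1+r}{2}D_2$, where $D_1,D_2$ are independent sums of the points of a Poisson process on $(0,1]$ with intensity $u^{-1}\D u$. This gives $4\partial_r\xi(1,r)=\E[h_r(S_r)]$ for an explicit rational function $h_r$.
\item The $r$-dependence of the law of $S_r$ is handled by the Mecke identity $\E[Df(D)]=\int_0^1\E[f(D+u)]\D u$. This converts the $D_2$-weighted chain-rule term into the finite difference $\bigl(h_r(S_r+\tfrac{1+r}{2})-h_r(S_r)\bigr)/(1+r)$.
\item At a zero, $\E[h_r(S_r)]=0$ removes the negative part. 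What remains is $\E[p_r(S_r)]$ with $p_r>0$, proved by exhibiting $p_r$ as the Laplace transform of an explicitly positive function.
\end{itemize}
Your Step 3 needs an argument of this sign-at-zeros type, or a fully executed rigorous numerical verification.
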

From available analytical tools, finding a closed form for the minimiser does not seem possible. However, numerical evaluation shows $x^\ast \approx 0.07991824$, as we can also see in Figure~\ref{fig:worst_trapezium}.

\begin{figure}[ht]
     \centering
     \begin{subfigure}[b]{0.48\textwidth}
         \centering
         \includegraphics[width=\textwidth]{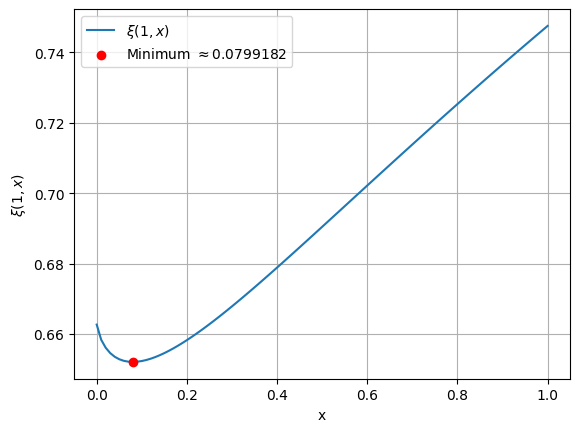}
         \caption{Plot of $x\mapsto \xi(1,x)$ for $x \in [0,1]$.}
         \label{fig:xi(1,x)}
     \end{subfigure}
     \hfill
     \begin{subfigure}[b]{0.49\textwidth}
         \centering
         \includegraphics[width=\textwidth]{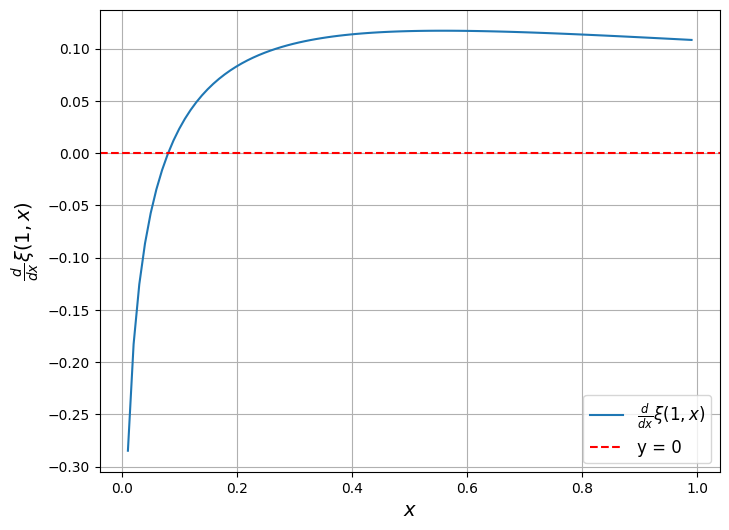}
         \caption{Plot of the derivative $x \mapsto \tfrac{\D}{\D x}\xi(1,x)$ for $x \in (0,1]$.}
         \label{fig:xi'(1,x)}
     \end{subfigure}
        \caption{Finding the worst trapezium.}
        \label{fig:worst_trapezium}
\end{figure}

At first, it seems surprising that $x \mapsto \xi(1, x)$ from Corollary~\ref{cor:parking_constant} is not strictly increasing. The cases of $x = 0$ and $x = 1$ were known, since those are precisely the jamming limits from~\cite{Renyi_OG} and~\cite{triangles}. Hence, one could be tempted to think that we should expect to see a strictly increasing function $x \mapsto \xi(1,x)$. However, after simulations (Figure~\ref{fig:worst_trap_sim}) and plotting the closed form of the function (Figure~\ref{fig:worst_trapezium}), it seems clearer why such a shape would occur. Indeed, if our deposited objects are triangles, we will almost surely alternate between triangles with their base on the upper line of the substrate, and triangles with their base on the lower line of the substrate (see Figure~\ref{fig:triangles-small_gaps}). However, as soon as the value $x$ is strictly larger than $0$, there is a positive probability of having two trapeziums next to each other, both having their (longer) base on the same line of the substrate (see Figure~\ref{fig:trapeziums-big_gaps}). This can potentially leave large unusable gaps, and this provides a geometric mechanism explaining the initial decrease for the jamming limit.

\begin{figure}
\centering
\begin{subfigure}{0.3\textwidth}
    \centering
    \begin{tikzpicture}[scale = 0.65]
	    \draw (0, 0) -- (5, 0);
	    \draw (0, 1) -- (5, 1);
        \filldraw[fill=gray] (0.4, 0) -- (2.4,0) -- (1.4,1);
        \filldraw[fill=gray] (1.45, 1) -- (3.45,1) -- (2.45,0);
        \filldraw[fill=gray] (2.5, 0) -- (4.5,0) -- (3.5,1);
    \end{tikzpicture}
    \caption{Triangles}
    \label{fig:triangles-small_gaps}
\end{subfigure}
\begin{subfigure}{0.3\textwidth}
    \centering
    \begin{tikzpicture}[scale = 0.65]
	    \draw (0, 0) -- (5, 0);
        \draw (0, 1) -- (5, 1);
        \filldraw[fill=gray] (0.4, 0) -- (2.4,0) -- (1.5,1) -- (1.3, 1);
        \filldraw[fill=gray] (2.5, 0) -- (4.5,0) -- (3.6,1) -- (3.4, 1);
    \end{tikzpicture}
    \caption{Trapeziums}
    \label{fig:trapeziums-big_gaps}
\end{subfigure}
\caption{Big gaps appearing once triangles are replaced with trapeziums.}
\end{figure}

\begin{figure}[ht]
     \centering
         \includegraphics[width=0.49\textwidth]{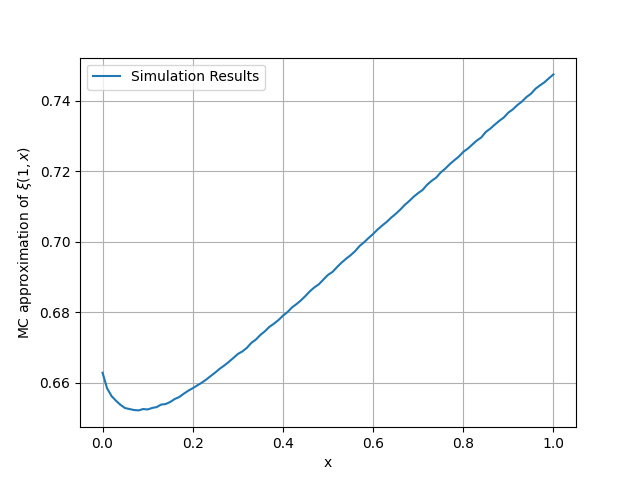}
         \caption{Simulation of $x\mapsto \xi(1,x)$ for $x \in [0,1]$.}
         \label{fig:worst_trap_sim}
\end{figure}

In addition to plotting the function $\xi(1, x)$ from Corollary~\ref{cor:parking_constant}, we ran a simulation of our model. The precise values of $x$ in the simulations went from $0$ to $1$ with a step size of $0.01$. For each particular value of $x$, we ran $500$ simulations, with the strip length set to $5000$. Simulation results are shown in Figure~\ref{fig:worst_trap_sim}. Clearly, the illustrative MC approximation aligns very well with the function shown in Figure~\ref{fig:xi(1,x)}.

\subsection{Open Problems}\label{sec:open_problems}
Below, we list some open problems related to those considered in this paper, which seem appropriate for future work. 
\begin{itemize}[leftmargin=2em]
    \item A CLT for the variables $T_x$ and $P_x$, whenever $a>b$, and a quantitative bound via a Stein's argument in the total-variation distance or the Wasserstein distance. 
    \item Finding whether $\lambda_2(a,b)$ admits an explicit integral representation analogous to $\lambda(a,b)$.
    \item While we have shown that there exists a unique minimiser for $x \mapsto \xi(1,x)$ on $[0,1]$, an open problem is finding a closed analytical form for the minimiser $x^*$. Due to the very complex form of $\xi(1,x)$, such an analytical closed form for $x^*$ seems out of reach, but it is an open problem.
\end{itemize}

\section{Proofs}

\subsection{Recursive equations for regularity}

\begin{lemma}\label{lem:eta_func}
For $\eta$ defined in~\eqref{eq:defn_eta}, it follows that $\eta(x)=0$ for $x \in [0,(a+b)/2)$, $\eta(x)=1/2$ for $[(a+b)/2,a)$, $\eta(a)=1$ and  
    \begin{equation}\label{eq:defn_eta_x}
        \eta(x) =
            1+\frac{1}{x-a} \bigg( \int_0^{x-a} \eta(t) \D t + \int_0^{x-\tfrac{a+b}{2}} \eta(t)\D t\bigg), \quad \text{ for all } x > a,
    \end{equation} where $\eta$ is a non-decreasing function on $(0,\infty)$, and continuous on $(a,\infty)$, specifically $\eta$ is locally absolutely continuous on $(a,\infty)$.
\end{lemma}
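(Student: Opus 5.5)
The plan is to derive separate first-step recursions for $\mu(x)=\E[P_x]$ and $\nu(x)=\E[T_x]$ by conditioning on the first deposited trapezium, and then add them. The point is that the two "cross" integrals will pair off, so that the half-sum $\eta$ satisfies the closed equation~\eqref{eq:defn_eta_x}.

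\textbf{Small values of $x$.} These follow directly from the conventions after~\eqref{eq:defn:P_x,T_x}.
\begin{itemize}
\item For $x<(a+b)/2$ nothing fits, so $\eta(x)=0$.
\item For $(a+b)/2\le x<a$ exactly one trapezium fits in $\calT_x$ and none in $\calP_x$. Hence $\nu=1$, $\mu=0$ and $\eta=1/2$.
\item At $x=a$ we have $\mu(a)=1$. Also $\nu(a)=1$, since $a<(3a-b)/2$ when $a>b$ and only one lower placement fits; if $a=b$ then $\calT_a=\calP_a$. So $\eta(a)=1$.
\end{itemize}

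\textbf{Recursion for $\mu$ ($x>a$).} Take the upper placement at $\tau\in(0,x-a)$.
\begin{itemize}
\item The left edge of the parked trapezium has the same slope as the left edge of $\calP_x$, so the left remnant is $\calP_\tau$.
\item The right remnant has upper side $x-\tau-a$ and lower side $x-\tau-b$. Its bases differ by $a-b$ and it is flipped, so by Remark~(ii) it is $\calT_{x-\tau-(a+b)/2}$.
\item The lower placement is the image of this one under the $180^\circ$ rotation, which preserves $\calP_x$. So it contributes the same integrals.
\end{itemize}
By Remark~(iii) the two remnants are conditionally independent copies, so linearity gives
\[
\mu(x)=1+\frac{1}{x-a}\Big(\int_0^{x-a}\mu(t)\D t+\int_{(a-b)/2}^{x-(a+b)/2}\nu(s)\D s\Big).
\]
Since $\nu=0$ on $[0,(a+b)/2)\supset[0,(a-b)/2)$, up to a null set when $b=0$, the second integral may be taken over $(0,x-(a+b)/2)$.

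\textbf{Recursion for $\nu$ ($x>a$).} I would do the same bookkeeping for $\calT_x$, treating cases (i) and (ii) of Definition~\ref{defn:UDS_} separately. Reading off Figure~\ref{fig:trapezium_arriving_possible_strips}(c)--(e):
\begin{itemize}
\item Upper placements leave two parallelograms.
\item Lower placements leave two trapeziums.
\end{itemize}
Changing variables in each piece, the target identity is
\[
\nu(x)=1+\frac{1}{x-a}\Big(\int_0^{x-a}\nu+\int_0^{x-(a+b)/2}\mu\Big),
\]
again using that $\mu$ vanishes below $a$ and $\nu$ vanishes below $(a+b)/2$ to extend the integration ranges. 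Note that $2x-2a$ is the total sampling length in case (ii), and in case (i) the missing $\mu$-range lies where $\mu=0$. Averaging the two recursions then gives~\eqref{eq:defn_eta_x}.

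I expect this geometric identification of remnant sizes for $\calT_x$, with the index shifts by $(a-b)/2$ and $(a+b)/2$ and the compatibility of case (i) with the formula, to be the main bookkeeping obstacle.

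\textbf{Regularity.} Write $c=(a+b)/2\le a$. For $x>a$ the right side of~\eqref{eq:defn_eta_x} is $1/(x-a)$ times integrals of a locally bounded function (Remark~\ref{rem:det_bound_eta}) with moving upper limits. Hence $\eta$ is continuous on $(a,\infty)$, and indeed locally absolutely continuous there. Differentiating almost everywhere and substituting~\eqref{eq:defn_eta_x} back in gives
\[
(x-a)\,\eta'(x)=\eta(x-a)+\eta(x-c)+1-\eta(x).
\]

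\textbf{Monotonicity.} I would argue by continuation, step by step. On $[0,a]$, monotonicity is read off the explicit values. Also $\eta(a^+)\ge 1=\eta(a)$, so there is no downward jump at $a$.

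Suppose $\eta$ is non-decreasing on $[0,x-c]$. By~\eqref{eq:defn_eta_x}, $\eta(x)-1$ is the average over $t\in(0,x-a)$ of $\eta(t)+\eta(t+a-c)$. Both arguments are at most $x-c$, so this average is at most $\eta(x-a)+\eta(x-c)$. Hence $\eta'(x)\ge 1/(x-a)>0$.

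This extends monotonicity in steps of length $c>0$. Together with absolute continuity, it yields that $\eta$ is non-decreasing on all of $(0,\infty)$.
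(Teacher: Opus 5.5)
\textbf{The gap is in the $\nu$-recursion.} Your initial values, the $\mu$-recursion, the continuity and absolute-continuity argument, and the overall shape of the monotonicity induction are all in line with the paper. The $180^\circ$-rotation argument is a neat substitute for the paper's change of variables, and it gives the recursion for every $x>a$ at once.

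The problem is the $\nu$-recursion, which is the step that makes $\eta$ close. You only announce the target identity, and the geometric assignment you commit to for $\calT_x$ is reversed. In the panels (c)--(e) you cite, an \emph{upper} placement on $\calT_x$ (longer base on the short upper side, $\tau\in(0,x-\tfrac{3a-b}{2})$) leaves two \emph{trapeziums}, $\calT_{\tau+\delta}$ and $\calT_{x-\tau-a}$, where $\delta\coloneqq\tfrac{a-b}{2}$. A \emph{lower} placement at $y\in(0,x-c)$, with $c=\tfrac{a+b}{2}$, leaves two \emph{parallelograms}, $\calP_y$ and $\calP_{x-c-y}$.

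With your assignment the bookkeeping cannot return your target when $a>b$. In case (ii) the two sampling ranges have lengths $x-a-\delta$ (upper) and $x-c=x-a+\delta$ (lower). The term $\frac{1}{x-a}\int_0^{x-c}\mu$ needs two parallelogram children whose sizes sweep an interval of length $x-c$, so it can only come from the lower placements. Likewise $\frac{1}{x-a}\int_\delta^{x-a}\nu$ needs children sweeping an interval of length $x-a-\delta$, so it must come from the upper placements.

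With the correct assignment, for $x\ge\frac{3a-b}{2}$,
\begin{align*}
\nu(x)&=1+\frac{1}{2x-2a}\Big(\int_0^{x-a-\delta}\big(\nu(\tau+\delta)+\nu(x-\tau-a)\big)\D\tau+\int_0^{x-c}\big(\mu(y)+\mu(x-c-y)\big)\D y\Big)\\
&=1+\frac{1}{x-a}\Big(\int_\delta^{x-a}\nu(t)\D t+\int_0^{x-c}\mu(t)\D t\Big).
\end{align*}
The lower limit $\delta$ may be replaced by $0$, since $\nu=0$ on $[0,\delta)\subseteq[0,c)$. In case (i), $a<x<\frac{3a-b}{2}$, one has $\nu(x)=1$ directly. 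Your target's right-hand side is also $1$ there, because $x-a<\delta\le c$ and $x-c<a-b\le a$ make both integrals vanish. Only after this does averaging with the $\mu$-recursion give~\eqref{eq:defn_eta_x}.

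\textbf{A smaller slip in the monotonicity step.} From $\eta(x)-1\le\eta(x-a)+\eta(x-c)$ and $(x-a)\eta'(x)=1+\eta(x-a)+\eta(x-c)-\eta(x)$ you get $\eta'(x)\ge 0$, not $\eta'(x)\ge 1/(x-a)$. The latter is false, since $\eta\equiv 1$ on $(a,a+b)$. Non-negativity is all the induction needs, so that part survives; it is then exactly the paper's induction in steps of $c$. Also state explicitly that rewriting $\int_0^{x-c}\eta$ as $\int_0^{x-a}\eta(t+\delta)\D t$ uses $\eta=0$ on $[0,\delta)$.
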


\begin{proof}[Proof of Lemma~\ref{lem:eta_func}]
    Recall the definition of $P_x$ and $T_x$ and how we deposit the trapeziums using the random sequential adsorption model. Under the $\mathcal{UDA}$, we let $\tau\sim \mathcal{U}(0,2x-2a)$, which yields a self-recursive relation which works in the case where $x \ge (3a-b)/2$. This case is covered in~\eqref{eq:probibalistic_vers_P_x_T_x} below. However, we will first consider the values of $x$ up to $(3a-b)/2$, since we can determine directly the exact number of possible deposited trapeziums.

    We start by finding the initial values of $\nu$ and $\mu$ for $x<(3a+b)/2$, by counting the number of possibilities for deposited trapeziums. Note that we need the initial values of $\mu$ and $\nu$ only for $x < (3a-b)/2$. However, in the procedure that follows, we build up those values with steps of particular sizes, and we do that until we are sure that we crossed over $(3a-b)/2$.  When $x<(a+b)/2$, it follows directly from definition that $\calT_{(a+b)/2}$ cannot fit on either $\calT_x$ or $\calP_x$, and hence $\eta(x)=0$ for $x \in (0,(a+b)/2)$. Next, for $x \in [(a+b)/2,a)$, we have that $P_x=0$ since $\calT_{(a+b)/2}$ cannot fit on $\calP_x$, and $T_x=1$ since we will always be able to fit one and only one $\calT_{(a+b)/2}$ on $\calT_x$. This yields that $\eta(x)=1/2$ for all $x \in [(a+b)/2,a)$. Consider now $x \in [a,a+b)$. In this case $T_x=1$ (resp. $P_x=1$) since we can fit one and only one $\calT_{(a+b)/2}$ on $\calT_x$ (resp. $\calP_x$). Hence, $\eta(x)=1$ for all $x \in [a,a+b)$. Notice that $a+b$ is not necessarily greater than $(3a-b)/2$, so we have to go further.

    Let now $x \in [a+b,(3a+b)/2)$. In this case, it follows that $T_x=1$ for all $x \in [a+b,(3a+b)/2)$, and hence it suffices to calculate $P_x$. Let $\tau \sim \mathcal{U}(0,2x-2a)$, implying by~\eqref{eq:defn:P_x,T_x} that
    \begin{equation}
        P_x=2 \1_{\{\tau \in (0,x-a-b)\}}+\1_{\{\tau \in (x-a-b,x-a)\}}+\1_{\{\tau \in (x-a,x-a+b)\}}+2\1_{\{\tau \in (x-a+b,2x-2a)\}} \, \text{ a.s}.
    \end{equation} Hence, by the law of total expectation $\E[X_T]=\E[\E[X_t|T=t]]$, it follows that 
    \begin{equation}
        \mu(x)= \E[P_x]=\frac{1}{2x-2a}\bigg( \int_0^{x-a-b}2 \D t +\int_{x-a-b}^{x-a+b}\D t + \int_{x-a+b}^{2x-2a}2 \D t \bigg)=1+\frac{x-a-b}{x-a}.
    \end{equation} Altogether, we therefore have that $\eta(x)=1+(x-a-b)/(2x-2a)$ for all $x \in [a+b,(3a+b)/2)$.

    If $x > (3a-b)/2$, it follows from Definition~\ref{defn:UDS_} of $\mathcal{UDA}$, that 
\begin{align}\label{eq:probibalistic_vers_P_x_T_x}
    \begin{aligned}
        P_x&
        = 1+\1_{\{\tau \in (0,x-a)\}}\bigg(P_{\tau}+T_{x-\tau-\tfrac{a+b}{2}}\bigg)+\1_{\{\tau \in (x-a,2x-2a)\}}\bigg(T_{t_\tau}+P_{x-t_{\tau}-\tfrac{a+b}{2}}\bigg) \,\text{ a.s. and},\\
        T_x
       & = 
        1+\1_{\big\{\tau \in \big(0,x-\tfrac{3a-b}{2}\big)\big\}}\bigg(T_{\tau + \tfrac{a-b}{2}}+T_{x-\tau-a}\bigg)+\1_{\big\{\tau \in \big(x-\tfrac{3a-b}{2},2x-2a\big)\big\}}\bigg(P_{t_\tau}+P_{x-t_{\tau}-\tfrac{a+b}{2}}\bigg) \, \text{ a.s.},
    \end{aligned}
    \end{align} where $t_\tau=\tau+(3a-b)/2-x$, as illustrated in Figure~\ref{fig:trapezium_arriving}. For all $x\ge (3a+b)/2>(3a-b)/2$ we can find $\eta(x)$ by the recursive relationship in~\eqref{eq:probibalistic_vers_P_x_T_x}. By the useful rule $\E[X_T]=\E[\E[X_t|T=t]]$ and change of variables, it follows from~\eqref{eq:probibalistic_vers_P_x_T_x} that
    \begin{align}\label{eq:resurc_mu}
    \begin{aligned}
        \mu(x)
        &=
        1+\frac{1}{2x-2a} \bigg( \int_0^{x-a} \mu(t)+\nu\big(x-t-\tfrac{a+b}{2}\big) \D t + \int_{\tfrac{a-b}{2}}^{x-\tfrac{a+b}{2}} \nu(t)+\mu\big(x-t-\tfrac{a+b}{2}\big)\D t\bigg)\\
        &=1+\frac{1}{x-a} \bigg(\int_0^{x-a} \mu(t)\D t + \int_{\tfrac{a-b}{2}}^{x-\tfrac{a+b}{2}}\nu(t) \D t\bigg).
    \end{aligned}
    \end{align} Similarly, it follows from~\eqref{eq:probibalistic_vers_P_x_T_x} that
    \begin{align}\label{eq:resurc_nu}
    \begin{aligned}
         \nu(x)
        &=
        1+\frac{1}{2x-2a}\bigg( \int_{0}^{x-\tfrac{3a-b}{2}} \nu\big(t+\tfrac{a-b}{2}\big)+\nu (x-t-a )\D t + \int_0^{x-\tfrac{a+b}{2}} \mu(t)+ \mu\big(x-t-\tfrac{a+b}{2}\big) \D t \bigg)\\
        &=1+\frac{1}{x-a} \bigg( \int_{\tfrac{a-b}{2}}^{x-a} \nu(t) \D t + \int_0^{x-\tfrac{a+b}{2}} \mu(t) \D t \bigg).
    \end{aligned}
    \end{align}
    Note that $\nu(x)=0$ for all $x \in (0,(a-b)/2)$, since we cannot fit $\calT_{(a+b)/2}$ inside $\calT_{(a-b)/2}$. Thus,
    \begin{equation}\label{eq:eta_xbig_3a+b/2}
        \eta(x)= 1+\frac{1}{x-a} \bigg( \int_0^{x-a} \eta(t) \D t+ \int_0^{x-\tfrac{a+b}{2}} \eta(t) \D t\bigg), \quad \text{ for all }x\ge \frac{3a+b}{2}.
    \end{equation}
    By using the values for $\eta(x)$ when $x \in (0,a+b)$, one can show that the form $\eta(x)=1+(x-a-b)/(2x-2a)$ for all $x \in [a+b,(3a+b)/2)$ found above, corresponds to plugging the initial $\eta$ values into~\eqref{eq:eta_xbig_3a+b/2}, concluding that~\eqref{eq:eta_xbig_3a+b/2} holds for all $x>a+b$. The similar approach holds for $x \in (a,a+b)$, since $\eta(x)=0$ for all $x <(a+b)/2$. Hence,~\eqref{eq:eta_xbig_3a+b/2} holds for all $x>a$.

\begin{figure}[!ht]
\begin{subfigure}{1.0\textwidth}
    \centering
    \begin{tikzpicture}[scale = 0.55]
        \draw (1, 0) -- (10, 0);
	\draw (0, 1) -- (9, 1);
        \draw (1, 0) -- (0, 1);
        \draw (10, 0) -- (9, 1);
	\filldraw[fill=gray] (6, 0) -- (7,1) -- (4,1) -- (5, 0);
        \draw[decorate,decoration={brace,amplitude=5pt,raise=0pt,mirror},xshift=0pt, yshift = -3pt] (1, 0) -- (10, 0) node [midway,yshift=-13pt,xshift=0pt]{$x$};
        \draw[decorate,decoration={brace,amplitude=5pt,raise=0pt,mirror},xshift=0pt, yshift = 3pt] (4, 1) -- (0, 1) node [midway,yshift=13pt,xshift=0pt]{$t$};

        \draw[->] (10.5, 0.5) -- (12.5, 0.5);

        \draw (14, 0) -- (18, 0);
	\draw (13, 1) -- (17, 1);
        \draw (14, 0) -- (13, 1);
        \draw (18, 0) -- (17, 1);
        \draw[decorate,decoration={brace,amplitude=5pt,raise=0pt,mirror},xshift=0pt, yshift = -3pt] (14, 0) -- (18, 0) node [midway,yshift=-13pt,xshift=0pt]{$t$};
        \draw[decorate,decoration={brace,amplitude=5pt,raise=0pt,mirror},xshift=0pt, yshift = 3pt] (17, 1) -- (13, 1) node [midway,yshift=13pt,xshift=0pt]{$t$};
        
        \node[rectangle, draw=none, minimum size=1pt] at (18.5, 0.5) {$+$};

        \draw (19, 0) -- (23, 0);
	\draw (20, 1) -- (22, 1);
        \draw (19, 0) -- (20, 1);
        \draw (23, 0) -- (22, 1);
        \draw[decorate,decoration={brace,amplitude=5pt,raise=0pt,mirror},xshift=0pt, yshift = -3pt] (19, 0) -- (23, 0) node [midway,yshift=-13pt,xshift=0pt]{$x - t - b$};
        \draw[decorate,decoration={brace,amplitude=5pt,raise=0pt,mirror},xshift=0pt, yshift = 3pt] (22, 1) -- (20, 1) node [midway,yshift=13pt,xshift=0pt]{$x - t - a$};
    \end{tikzpicture}
    \caption{Trapezium $\calT_{(a+b)/2}$ arriving on $\calP_x$ with base up.}
    \label{fig:parallelogram_up}
\end{subfigure}

\bigskip

\begin{subfigure}{1.0\textwidth}
    \centering
    \begin{tikzpicture}[scale = 0.55]
        \draw (1, 0) -- (10, 0);
	\draw (0, 1) -- (9, 1);
        \draw (1, 0) -- (0, 1);
        \draw (10, 0) -- (9, 1);
	\filldraw[fill=gray] (6, 1) -- (7,0) -- (4,0) -- (5, 1);
        \draw[decorate,decoration={brace,amplitude=5pt,raise=0pt,mirror},xshift=0pt, yshift = -3pt] (1, 0) -- (4, 0) node [midway,yshift=-13pt,xshift=0pt]{$t-\frac{a - b}{2}$};
        \draw[decorate,decoration={brace,amplitude=5pt,raise=0pt,mirror},xshift=0pt, yshift = 3pt] (9, 1) -- (0, 1) node [midway,yshift=13pt,xshift=0pt]{$x$};

        \draw[->] (10.5, 0.5) -- (12.5, 0.5);

        \draw (14, 0) -- (17, 0);
	\draw (13, 1) -- (18, 1);
        \draw (14, 0) -- (13, 1);
        \draw (17, 0) -- (18, 1);
        \draw[decorate,decoration={brace,amplitude=5pt,raise=0pt,mirror},xshift=0pt, yshift = -3pt] (14, 0) -- (17, 0) node [midway,yshift=-13pt,xshift=0pt]{$t-\frac{a-b}{2}$};
        \draw[decorate,decoration={brace,amplitude=5pt,raise=0pt,mirror},xshift=0pt, yshift = 3pt] (18, 1) -- (13, 1) node [midway,yshift=13pt,xshift=0pt]{$t+\frac{a-b}{2}$};
        
        \node[rectangle, draw=none, minimum size=1pt] at (18.5, 0.5) {$+$};

        \draw (20, 0) -- (23, 0);
	\draw (19, 1) -- (22, 1);
        \draw (20, 0) -- (19, 1);
        \draw (23, 0) -- (22, 1);
        \draw[decorate,decoration={brace,amplitude=5pt,raise=0pt,mirror},xshift=0pt, yshift = -3pt] (20, 0) -- (23, 0) node [midway,yshift=-13pt,xshift=0pt]{$x - t - \frac{a+b}{2}$};
        \draw[decorate,decoration={brace,amplitude=5pt,raise=0pt,mirror},xshift=0pt, yshift = 3pt] (22, 1) -- (19, 1) node [midway,yshift=13pt,xshift=0pt]{$x - t - \frac{a+b}{2}$};
    \end{tikzpicture}
    \caption{Trapezium $\calT_{(a+b)/2}$ arriving on $\calP_x$ with base down.}
    \label{fig:parallelogram_down}
\end{subfigure}

\bigskip

\begin{subfigure}{1.0\textwidth}
    \centering
    \begin{tikzpicture}[scale = 0.55]
        \draw (0, 0) -- (10, 0);
	\draw (1, 1) -- (9, 1);
        \draw (0, 0) -- (1, 1);
        \draw (10, 0) -- (9, 1);
	\filldraw[fill=gray] (5, 0) -- (6, 0) -- (7,1) -- (4,1);
        \draw[decorate,decoration={brace,amplitude=5pt,raise=0pt,mirror},xshift=0pt, yshift = -3pt] (0, 0) -- (10, 0) node [midway,yshift=-13pt,xshift=0pt]{$x+\frac{a-b}{2}$};
        \draw[decorate,decoration={brace,amplitude=5pt,raise=0pt,mirror},xshift=0pt, yshift = 3pt] (4, 1) -- (1, 1) node [midway,yshift=13pt,xshift=0pt]{$t-\frac{a-b}{2}$};

        \draw[->] (10.5, 0.5) -- (12.5, 0.5);

        \draw (13, 0) -- (18, 0);
	\draw (14, 1) -- (17, 1);
        \draw (13, 0) -- (14, 1);
        \draw (18, 0) -- (17, 1);
        \draw[decorate,decoration={brace,amplitude=5pt,raise=0pt,mirror},xshift=0pt, yshift = -3pt] (13, 0) -- (18, 0) node [midway,yshift=-13pt,xshift=0pt]{$t+\frac{a-b}{2}$};
        \draw[decorate,decoration={brace,amplitude=5pt,raise=0pt,mirror},xshift=0pt, yshift = 3pt] (17, 1) -- (14, 1) node [midway,yshift=13pt,xshift=0pt]{$t-\frac{a-b}{2}$};
        
        \node[rectangle, draw=none, minimum size=1pt] at (18.5, 0.5) {$+$};

        \draw (19, 0) -- (23, 0);
	\draw (20, 1) -- (22, 1);
        \draw (19, 0) -- (20, 1);
        \draw (23, 0) -- (22, 1);
        \draw[decorate,decoration={brace,amplitude=5pt,raise=0pt,mirror},xshift=0pt, yshift = -3pt] (19, 0) -- (23, 0) node [midway,yshift=-13pt,xshift=0pt]{$x - t - b$};
        \draw[decorate,decoration={brace,amplitude=5pt,raise=0pt,mirror},xshift=0pt, yshift = 3pt] (22, 1) -- (20, 1) node [midway,yshift=13pt,xshift=0pt]{$x - t - a$};
    \end{tikzpicture}
    \caption{Trapezium $\calT_{(a+b)/2}$ arriving on $\calT_x$ with base up.}
    \label{fig:trapeze_up}
\end{subfigure}

\bigskip

\begin{subfigure}{1.0\textwidth}
    \centering
    \begin{tikzpicture}[scale = 0.55]
        \draw (0, 0) -- (10, 0);
	\draw (1, 1) -- (9, 1);
        \draw (0, 0) -- (1, 1);
        \draw (10, 0) -- (9, 1);
	\filldraw[fill=gray] (5, 1) -- (6,1) -- (7,0) -- (4, 0);
        \draw[decorate,decoration={brace,amplitude=5pt,raise=0pt,mirror},xshift=0pt, yshift = -3pt] (0, 0) -- (4, 0) node [midway,yshift=-13pt,xshift=0pt]{$t$};
        \draw[decorate,decoration={brace,amplitude=5pt,raise=0pt,mirror},xshift=0pt, yshift = 3pt] (9, 1) -- (1, 1) node [midway,yshift=13pt,xshift=0pt]{$x-\frac{a-b}{2}$};

        \draw[->] (10.5, 0.5) -- (12.5, 0.5);

        \draw (13, 0) -- (17, 0);
	\draw (14, 1) -- (18, 1);
        \draw (13, 0) -- (14, 1);
        \draw (17, 0) -- (18, 1);
        \draw[decorate,decoration={brace,amplitude=5pt,raise=0pt,mirror},xshift=0pt, yshift = -3pt] (13, 0) -- (17, 0) node [midway,yshift=-13pt,xshift=0pt]{$t$};
        \draw[decorate,decoration={brace,amplitude=5pt,raise=0pt,mirror},xshift=0pt, yshift = 3pt] (18, 1) -- (14, 1) node [midway,yshift=13pt,xshift=0pt]{$t$};
        
        \node[rectangle, draw=none, minimum size=1pt] at (18.5, 0.5) {$+$};

        \draw (20, 0) -- (23, 0);
	\draw (19, 1) -- (22, 1);
        \draw (20, 0) -- (19, 1);
        \draw (23, 0) -- (22, 1);
        \draw[decorate,decoration={brace,amplitude=5pt,raise=0pt,mirror},xshift=0pt, yshift = -3pt] (20, 0) -- (23, 0) node [midway,yshift=-13pt,xshift=0pt]{$x - t - \frac{a+b}{2}$};
        \draw[decorate,decoration={brace,amplitude=5pt,raise=0pt,mirror},xshift=0pt, yshift = 3pt] (22, 1) -- (19, 1) node [midway,yshift=13pt,xshift=0pt]{$x - t - \frac{a+b}{2}$};
    \end{tikzpicture}
    \caption{Trapezium $\calT_{(a+b)/2}$ arriving on $\calT_x$ with base down.}
    \label{fig:trapeze_down}
\end{subfigure}
\caption{Illustration of all the possible scenarios of the trapezium $\calT_{(a+b)/2}$ arriving on strips $\calP_x$ and $\calT_x$ (introduced in Figure~\ref{fig:shapes_of_strips}). After a trapezium arrives, the original strip decomposes into two smaller strips that are again of the two types introduced in Figure~\ref{fig:shapes_of_strips}.}
\label{fig:trapezium_arriving}
\end{figure}


We can now show that $\eta$ is continuous on $(a,\infty)$. Define the indefinite integral $J(z)\coloneqq \int_{0}^{z}\eta(t)\D t$, for $z \ge 0$. Since $\eta$ is (piecewise) bounded on bounded intervals, $J$ is locally Lipschitz-continuous, and hence continuous, on $[0,\infty)$. For $x>a$ we may rewrite~\eqref{eq:defn_eta_x} as $\eta(x)=1+(J(x-a)+J(x-(a+b)/2))/(x-a)$. Here $x\mapsto x-a$ and $x\mapsto x-(a+b)/2$ are continuous on $(a,\infty)$, $J$ is continuous on $[0,\infty)$, and the denominator $x-a$ never vanishes on $(a,\infty)$. Therefore, $\eta$ is continuous on $(a,\infty)$.

Next, we prove that $x\mapsto \eta(x)$ is non-decreasing on $(0,\infty)$. Since $\eta$ is locally bounded, $J$ is locally absolutely continuous on $[0,\infty)$. Moreover, for all $x>a$, we have $\eta(x)=1+(J(x-a)+J(x-(a+b)/2))/(x-a)$, as discussed above. Therefore, it follows that $\eta$ is locally absolutely continuous on $(a,\infty)$. In particular, since $J'(z)=\eta(z)$ for almost every $z>0$, differentiation of the preceding identity yields, for almost every $x>a$,
\begin{equation}\label{eq:star_3_pictres}
    \frac{\D}{\D x}\eta(x)
    =
    -\frac{1}{(x-a)^2}
    \bigg(
        \int_0^{x-a}\eta(t)\D t
        +
        \int_0^{x-\tfrac{a+b}{2}}\eta(t)\D t
    \bigg)
    +
    \frac{1}{x-a}
    \bigg(
        \eta(x-a)
        +
        \eta\bigl(x-\tfrac{a+b}{2}\bigr)
    \bigg).
\end{equation}
Furthermore,
\[
    \frac{\eta(x-a)}{x-a}
    =
    \frac{1}{(x-a)^2}
    \int_0^{x-a}\eta(x-a)\D t, \quad \text{and}\quad
    \frac{\eta\bigl(x-\tfrac{a+b}{2}\bigr)}{x-a}
    =
    \frac{1}{(x-a)^2}
    \int_{\tfrac{a-b}{2}}^{x-\tfrac{a+b}{2}}
    \eta\bigl(x-\tfrac{a+b}{2}\bigr)\D t.
\]
Since $\eta(t)=0$ for $t\in(0,(a-b)/2)$, we therefore obtain, for almost every $x>a$,
\begin{equation}\label{eq:derivative_eta}
    \frac{\D}{\D x}\eta(x)
    =
    \frac{1}{(x-a)^2}
    \bigg(
        \int_0^{x-a}
        \bigl(\eta(x-a)-\eta(t)\bigr)\D t
        +
        \int_{\tfrac{a-b}{2}}^{x-\tfrac{a+b}{2}}
        \bigl(
            \eta\bigl(x-\tfrac{a+b}{2}\bigr)-\eta(t)
        \bigr)\D t
    \bigg).
\end{equation}

By the initial values established above, $x\mapsto\eta(x)$ is non-decreasing on $(0,a]$. Moreover, $\eta\ge 0$, and hence~\eqref{eq:defn_eta_x} shows that $\eta(x)\ge 1=\eta(a)$ for every $x>a$. Thus, in order to extend the monotonicity beyond $a$, it suffices, in view of~\eqref{eq:derivative_eta}, to verify that
\begin{equation}\label{eq:suff_cond_non-decreasing}
    \eta(x-a)-\eta(t)\ge 0
    \quad\text{for all }t\in(0,x-a), \quad \text{and}\quad 
    \eta\bigl(x-\tfrac{a+b}{2}\bigr)-\eta(t)\ge 0
    \quad\text{for all }
    t\in
    \bigl(
        \tfrac{a-b}{2},
        x-\tfrac{a+b}{2}
    \bigr).
\end{equation}

Since $\eta$ is non-decreasing on $(0,a]$, both equations in~\eqref{eq:suff_cond_non-decreasing} hold whenever $x-(a+b)/2<a$, that is, whenever $ a<x<a+(a+b)/2 = (3a+b)/2$. Indeed, since $(a+b)/2\le a$, we also have $x-a\le x-(a+b)/2<a$. Hence $\eta'(x)\ge 0$ for almost every $x\in(a,(3a+b)/2)$. Since $\eta$ is absolutely continuous on every compact subinterval of $(a,\infty)$, it follows that $\eta$ is non-decreasing on $(a,(3a+b)/2)$. Together with the initial values and the fact that $\eta(x)\ge \eta(a)$ for $x>a$, this shows that $\eta$ is non-decreasing on $(0,(3a+b)/2)$, i.e. on $(0,a+(a+b)/2)$.

We now proceed by induction. For $n\ge 1$, let the $n$-th induction hypothesis be
\begin{equation}\label{eq:induction_proof}
    x\mapsto\eta(x)
    \text{ is non-decreasing on }
    \biggl(
        0,
        a+n\frac{a+b}{2}
    \biggr).
\end{equation}
The preceding argument proves the case $n=1$. Assume that~\eqref{eq:induction_proof} holds for some $n\ge 1$. Let $a+n(a+b)/2<x<a+(n+1)(a+b)/2$. Then, $x-(a+b)/2<a+n(a+b)/2$. Moreover, since $(a+b)/2\le a$, it follows that $ x-a \le x-(a+b)/2< a+n(a+b)/2$. Thus, by the induction hypothesis,~\eqref{eq:suff_cond_non-decreasing} holds for this $x$. It follows from~\eqref{eq:derivative_eta} that $\eta'(x)\ge 0$ for almost every $x\in (a+n(a+b)/2, a+(n+1)(a+b)/2)$. Since $\eta$ is locally absolutely continuous on $(a,\infty)$, for any $a+n(a+b)/2<u<v< a+(n+1)(a+b)/2$, we have $\eta(v)-\eta(u) = \int_u^v \eta'(x)\D x \ge 0$. Hence $\eta$ is non-decreasing on this interval. Since $\eta$ is continuous on $(a,\infty)$, this monotonicity joins continuously with the induction hypothesis at $a+n(a+b)/2$, and therefore~\eqref{eq:induction_proof} holds with $n$ replaced by $n+1$.

By induction, $x\mapsto\eta(x)$ is non-decreasing on $(0,\infty)$, completing the proof.
\end{proof}

\subsection{Exact parking constant}

\begin{lemma}\label{lem:parking_constant_a,b}
    For all $0 \le b \le a <\infty$ such that $a >0$, and $\eta$ defined in~\eqref{eq:defn_eta}, it follows that
\begin{equation}\label{eq:parking_const_number_dep_2}
        \lim_{x \to \infty}\frac{\eta(x)}{x} = \lambda(a,b)= \frac{1}{2} \int_0^\infty (2+e^{-tb}-e^{-ta})\exp \bigg( \int_0^t \frac{-2+e^{-s(a+b)/2}+e^{-as}}{s} \D s\bigg) \D t \in (0,\infty).
    \end{equation} 
\end{lemma}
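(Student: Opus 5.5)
We follow the Rényi/Laplace-transform route, using the integral equation of Lemma~\ref{lem:eta_func}. Multiplying~\eqref{eq:defn_eta_x} by $x-a$ gives, for all $x>a$,
\[
(x-a)\eta(x) = (x-a) + \int_0^{x-a}\eta(t)\D t + \int_0^{x-\frac{a+b}{2}}\eta(t)\D t .
\]
By Remark~\ref{rem:det_bound_eta}, $\eta(x)\le 2x/(a+b)$, so the Laplace transform $\phi(s)\coloneqq\int_0^\infty e^{-sx}\eta(x)\D x$ exists for $s>0$. It is convenient to work with $\psi(s)\coloneqq\int_a^\infty e^{-sx}\eta(x)\D x$, or equivalently to shift $x\mapsto x+a$. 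The initial values of Lemma~\ref{lem:eta_func} ($\eta=0$ on $[0,(a+b)/2)$ and $\eta=1/2$ on $[(a+b)/2,a)$) give the small-$x$ contribution explicitly.

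\textbf{Step 1: transform the equation.} Set $\eta_a(x)\coloneqq\eta(x+a)$ and multiply the identity by $e^{-sx}$. The left side $x\,\eta_a(x)$ becomes $-\frac{\D}{\D s}\widehat{\eta_a}(s)$. The right side becomes $1/s^2 + \phi(s)/s + e^{-s(a-b)/2}\phi(s)/s$ plus boundary terms. Using $\widehat{\eta_a}(s)=e^{as}\bigl(\phi(s)-\int_0^a e^{-st}\eta(t)\D t\bigr)$, this becomes a first-order linear ODE for $\phi$ of the form
\[
\phi'(s)=-\Bigl(\frac{e^{-as}+e^{-s(a+b)/2}}{s}-a\Bigr)\phi(s)+g(s),
\]
with explicit $g$ built from $e^{-as}/s^2$ and the initial segment $\int_0^a e^{-st}\eta(t)\D t = \tfrac{1}{2}\bigl(e^{-s(a+b)/2}-e^{-sa}\bigr)/s$. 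The difference $e^{-tb}-e^{-ta}$ in the claimed integrand traces back to this $\tfrac12$-plateau.

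\textbf{Step 2: solve and apply a Tauberian argument.} Solve the ODE with an integrating factor. The boundary condition is imposed at $s\to\infty$, where $\phi(s)\to0$ faster than the integrating factor grows, with decay $\sim e^{-s(a+b)/2}/s$. Then $\lambda=\lim_{s\downarrow0}s^2\phi(s)$. Changing variables turns the integrating factor into $\exp\bigl(\int_0^t(-2+e^{-u(a+b)/2}+e^{-au})\,\D u/u\bigr)$ and yields~\eqref{eq:parking_const_number_dep_2}. The limit is finite and positive because the exponential factor is at most $C t^{-2}$ for large $t$, the bracket $2+e^{-tb}-e^{-ta}$ lies in $[2,3]$, and near $0$ the integrand is bounded. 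To pass from $s^2\phi(s)\to\lambda$ to $\eta(x)/x\to\lambda$, we use that $\eta$ is non-decreasing (Lemma~\ref{lem:eta_func}). Karamata's Tauberian theorem applied to the measure $\D\eta$ gives $\eta(x)\sim\lambda x$, since $\int e^{-sx}\D\eta(x)=s\phi(s)\sim\lambda/s$.

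\textbf{Main obstacle.} The main difficulty is Step 1/2 bookkeeping: correctly tracking the boundary terms from the shifts by $a$ and $(a+b)/2$ and the initial plateau so that the final formula takes the stated form. We must also justify the constant of integration, i.e.\ show that the solution of the ODE vanishing at $\infty$ is the Laplace transform. For that we would use the explicit asymptotic $\phi(s)=\tfrac12 e^{-s(a+b)/2}/s+\Oh(e^{-sa}/s)$ as $s\to\infty$, read directly off the initial values.
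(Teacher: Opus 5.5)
Your strategy is the paper's: Laplace transform of the delay equation, a first-order linear ODE solved by an integrating factor, a normalisation at infinity, Karamata's theorem via monotonicity of $\eta$, and the $Ct^{-2}$ bound for $\lambda<\infty$. The last two of these steps are fine, but Steps 1--2 do not go through as written.

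\textbf{The signs in Step 1 are wrong.} Since $\eta=0$ on $[0,\tfrac{a-b}{2})$, the shifted term is exactly $e^{+s(a-b)/2}\phi(s)/s$, with no boundary terms. The resulting equation is
\[
\phi'(s)=-\Bigl(a+\frac{e^{-as}+e^{-s(a+b)/2}}{s}\Bigr)\phi(s)+g(s),\qquad g=a\Phi_0+\Phi_0'-\frac{e^{-as}}{s^2},\quad \Phi_0(s)=\frac{e^{-s(a+b)/2}-e^{-sa}}{2s},
\]
with $+a$, not $-a$. This is not cosmetic. With your sign the integrating factor decays like $e^{-as}$ instead of growing like $e^{as}$, and the stated formula cannot come out. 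In the correct derivation, the growth $e^{a(t-s)}$ of the integrating factor is exactly what cancels the factor $e^{-at}$ in the forcing.

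\textbf{With the correct sign, your normalisation in Step 2 breaks down.} Nonzero homogeneous solutions are $\asymp e^{-as}$, so every solution tends to $0$ and ``the solution vanishing at $\infty$'' selects nothing. For $b<a$ one has $\phi(s)\mathcal I(s)\asymp e^{s(a-b)/2}/s\to\infty$, contrary to your claim that $\phi$ decays faster than the integrating factor grows. Moreover $g(s)\mathcal I(s)\sim\frac{a-b}{4s}e^{-s(a+b)/2}\mathcal I(s)\asymp e^{s(a-b)/2}/s$, so $\int_s^\infty g\mathcal I$ diverges.

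The missing step is to strip off the plateau before integrating. Take as unknown your own $e^{-as}\widehat{\eta_a}(s)=F(s)=\int_a^\infty\eta(t)e^{-st}\,\D t=\phi(s)-\Phi_0(s)$, as the paper does. Then:
\[
F'+\Bigl(a+\frac{e^{-as}+e^{-s(a+b)/2}}{s}\Bigr)F=-\frac{e^{-as}}{2s^2}\bigl(2+e^{-bs}-e^{-as}\bigr).
\]
The bound $\eta(t)\le 2t/(a+b)$ gives $F(s)\le Ce^{-as}(1+as)/s^2$, hence $F\mathcal I\to0$, and this fixes the constant:
\[
F(s)\mathcal I(s)=\int_s^\infty\frac{\mathcal I(t)}{2t^2}e^{-at}\bigl(2+e^{-bt}-e^{-at}\bigr)\,\D t.
\]
Then $s^2\phi(s)=s^2\Phi_0(s)+s^2F(s)\to\lambda(a,b)$ by dominated convergence.

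Your asymptotic $\phi=\tfrac12e^{-s(a+b)/2}/s+\Oh(e^{-as}/s)$ carries exactly this information. However, it must replace the boundary condition stated in Step 2, not supplement it.
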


For the proof of Lemma~\ref{lem:parking_constant_a,b}, we define the  ``$a$-truncated'' Laplace transform of $\eta$, by
\begin{equation}\label{eq:truncated_laplace_transform}
    F(x)\coloneqq \int_a^\infty \eta(s)e^{-s x} \D s,
\end{equation} which is well-defined, since $s \mapsto \eta(s)e^{-s x}\in \mathcal{L}^1((a,\infty))$ for all $x>0$.

\begin{proof}[Proof of Lemma~\ref{lem:parking_constant_a,b}]
    From Lemma~\ref{lem:eta_func} we have that $\eta$ is locally absolutely continuous on $(a,\infty)$, and hence by~\eqref{eq:star_3_pictres}, it follows for almost every $x>a$, that \begin{equation}\label{eq:star_3_pictres_alt}
        \eta(x)+(x-a)\eta'(x)=1+\eta(x-a)+\eta\big(x-\tfrac{a+b}{2}\big).
    \end{equation} 
    Note that $s \mapsto (\eta(s)+(s-a)\eta'(s))e^{-s x} \in \mathcal{L}^1((a,\infty))$ for all $x >0$. Indeed, by~\eqref{eq:star_3_pictres_alt} it is equivalent to show that $s \mapsto (1+\eta(s-a)+\eta(s-(a+b)/2)) e^{-s x} \in \mathcal{L}^1((a,\infty))$ for all $x >0$, which follows directly from the linear growth of $\eta$ as shown in Remark~\ref{rem:det_bound_eta}. Thus, $(s-a)\eta'(s)e^{-s x} \in \mathcal{L}^1((a,\infty))$ by Remark~\ref{rem:det_bound_eta} and since $\eta(s)$ is positive. Hence, it follows from~\eqref{eq:star_3_pictres_alt} and Remark~\ref{rem:det_bound_eta} that
\begin{equation}\label{eq:integral_form_Laplace}
    F(x)+\int_a^\infty (s-a)\eta'(s)e^{-sx} \D s 
    = 
    \int_a^\infty e^{-s x} \D s
    +
    \int_a^\infty \eta(s-a) e^{-s x} \D s
    +
    \int_a^\infty \eta(s-\tfrac{a+b}{2}) e^{-s x} \D s,
\end{equation} for all $x>0$.
Each integral in~\eqref{eq:integral_form_Laplace} is finite and well-defined, as argued above. By integration by parts, each integral in~\eqref{eq:integral_form_Laplace} can be written in terms of $F(x)$. Since $\eta$ is locally absolutely continuous on $(a,\infty)$, we may apply integration by parts on $[a+\varepsilon,R]$, for every $\varepsilon>0$ and $R>a+\varepsilon$. Letting $\varepsilon\downarrow0$ and $R\to\infty$, the boundary terms vanish by the linear growth bound from Remark~\ref{rem:det_bound_eta}, and hence
\begin{align}
    I_1(x)\coloneqq \int_a^\infty (s-a)\eta'(s)e^{-sx} \D s
    =
    x \int_a^\infty s\eta(s)e^{-sx}\D s-(1+ax)F(x).
\end{align} Note, that $s \mapsto \eta(s)e^{-sx} \in \mathcal{L}^1((a,\infty))$ for all $x>0$ and $|\tfrac{\D }{\D x}\eta(s)e^{-sx}|=|s\eta(s)e^{-sx}|\le s\eta(s)e^{-s}$ for all $x \ge 1$, where $s \mapsto s\eta(s)e^{-s} \in \mathcal{L}^1((a,\infty))$. If we instead consider $x \in (0,1)$, then we may fix $x_0>0$ and consider all $x \in [x_0,1)$, for which $|\tfrac{\D }{\D x}\eta(s)e^{-sx}| \le s\eta(s)e^{-s x_0/2}$, where $s \mapsto s\eta(s)e^{-s x_0 /2} \in \mathcal{L}^1((a,\infty))$. Since $x_0>0$ is arbitrary, we have that $|\tfrac{\D }{\D x}\eta(s)e^{-sx}|$ is in $\mathcal{L}^1((a,\infty))$ for all $x>0$. Hence, when calculating $F'(x)$, we can change the order of integration and differentiation, implying that $F'(x)=- \int_a^\infty s\eta(s)e^{-sx} \D s$ for all $x >0$. Thus, 
\begin{equation}
    I_1(x)=-xF'(x)-(1+ax)F(x), \quad \text{ for all }x >0.
\end{equation} Next, we get directly that $I_2(x)\coloneqq \int_a^\infty e^{-s x} \D s = e^{-ax}/x$ for all $x>0$. By change of variables, it follows that
\begin{align}
    I_3(x)&\coloneqq \int_a^\infty \eta(s-a) e^{-s x} \D s 
    =
    e^{-ax}\int_0^\infty \eta(s) e^{-sx} \D s \\
    &=
    e^{-ax}\bigg( \frac{1}{2}\int_{\tfrac{a+b}{2}}^a e^{-sx} \D s+F(x)\bigg)
    =e^{-a x}\bigg( \frac{1}{2x} \bigg( e^{-x\big(\tfrac{a+b}{2}\big)}-e^{-x a}\bigg)+F(x)\bigg).
\end{align} Similarly, it follows that
\begin{equation}
    I_4(x)\coloneqq \int_a^\infty \eta\big(s-\tfrac{a+b}{2}\big) e^{-s x} \D s 
    =
    e^{-\big( \tfrac{a+b}{2}\big) x}\bigg( \frac{1}{2x} \bigg( e^{-x\big(\tfrac{a+b}{2}\big)}-e^{-x a}\bigg)+F(x)\bigg).
\end{equation} Using~\eqref{eq:integral_form_Laplace}, together with $I_1(x)$, $I_2(x)$, $I_3(x)$ and $I_4(x)$, it follows that
\begin{equation}\label{eq:star_5_pictures_cambridge}
    F'(x)+F(x)\mleft( a+\frac{e^{-x\big(\tfrac{a+b}{2} \big)}}{x}+\frac{e^{-a x }}{x} \mright)= \frac{1}{2x^2}\bigg( e^{-2xa}-2e^{-xa}-e^{-x(a+b)}\bigg), \quad \text{ for all }x >0.
\end{equation} Next, define $\mathcal{I}$ as follows:
\begin{equation}
    \mathcal{I}(x) \coloneqq \exp \mleft\{ \int_1^x \bigg( as+e^{-s \big( \tfrac{a+b}{2}\big)}+e^{-s a} \bigg)\frac{\D s}{s}\mright\}, \quad \text{ for all }x>0.
\end{equation} By definition of $\mathcal{I}$ and~\eqref{eq:star_5_pictures_cambridge}, we see that
\begin{equation}\label{eq:integrable_product_f_I}
    \frac{\D}{\D x} F(x)\mathcal{I}(x)
    =
    F'(x)\mathcal{I}(x)+\frac{F(x)\mathcal{I}(x)}{x}\mleft( ax+e^{-x\big(\tfrac{a+b}{2} \big)}+e^{-a x } \mright)
    = 
    \frac{\mathcal{I}(x)}{2x^2}\mleft( e^{-2xa}-2e^{-xa}-e^{-x(a+b)}\mright),
\end{equation} for all $x>0$. Next, we show that the right-hand side of~\eqref{eq:integrable_product_f_I} belongs to $\mathcal L^1((0,\infty))$. We consider separately the regions $(0,1]$ and $[1,\infty)$. First, let $x\in(0,1]$. Recall the definition of $\mathcal{I}$. Using the elementary inequality $e^{-u}\ge  1-u$, we obtain $e^{-s(a+b)/2}+e^{-as}
\ge 
2-((a+b)/2+a)s$ for all $s>0$. Since $x\le 1$, reversing the limits of integration yields
\begin{align*}
\int_1^x \frac{ e^{-s(a+b)/2}+e^{-as}}{s} \D s 
&\le
\int_1^x \mleft(\frac{2}{s}-\frac{a+b}{2}-a\mright)\D s 
=
2\log x - \mleft( \frac{a+b}{2}+a \mright)(x-1).
\end{align*} 
Consequently,
\begin{align*}
\log\mathcal I(x)
&\le
a(x-1)+2\log x - \mleft( \frac{a+b}{2}+a \mright)(x-1) 
=
2\log x+\frac{a+b}{2}(1-x)
\le
2\log x+\frac{a+b}{2}.
\end{align*}
Hence, $\mathcal I(x)
    \le
    e^{(a+b)/2}x^2$ for all $x \in (0,1]$. It follows for all $x \in (0,1]$, that
\begin{align*}
&\mleft| \frac{\mathcal I(x)}{2x^2} \mleft( e^{-2ax}-2e^{-ax}-e^{-(a+b)x} \mright) \mright| 
\le
\frac{e^{(a+b)/2}}{2} \mleft( e^{-2ax}+2e^{-ax}+e^{-(a+b)x} \mright)
\le
2e^{(a+b)/2},
\end{align*}
where we used $a,b>0$. Therefore,
\begin{equation}\label{eq:integrability_near_zero}
\int_0^1 \mleft| \frac{\mathcal I(x)}{2x^2} \mleft( e^{-2ax}-2e^{-ax}-e^{-(a+b)x} \mright) \mright| \D x
<\infty.
\end{equation}

It remains to consider $x\ge 1$. Since $x\mapsto \int_1^x (e^{-s(a+b)/2}+e^{-as})s^{-1} \D s$ is an increasing function and $\int_1^\infty e^{-cs} s^{-1} \D s = -\Ei(-c)$ for all $c>0$, we have
\begin{equation}\label{eq:bound_I_(x)}
    \mathcal I(x)
    \le
    C_{a,b}e^{ax}, \quad \text{where}\quad C_{a,b}
\coloneqq
\exp \mleft\{
-a-\Ei \mleft(-\frac{a+b}{2}\mright)-\Ei(-a)
\mright\}
<\infty,
    \quad\text{for all } x\ge 1.
\end{equation}
Thus,
\begin{align*}
&\mleft| \frac{\mathcal I(x)}{2x^2} \mleft( e^{-2ax}-2e^{-ax}-e^{-(a+b)x} \mright) \mright| 
\le
\frac{C_{a,b}}{2x^2} \mleft( e^{-ax}+2+e^{-bx} \mright)
\le
\frac{2C_{a,b}}{x^2}, \qquad \text{for all } x\ge 1.
\end{align*}
Since $x\mapsto x^{-2} \in \mathcal{L}^1((1,\infty))$, we conclude that
\begin{equation}\label{eq:integrability_infinity}
\int_1^\infty \mleft| \frac{\mathcal I(x)}{2x^2} \mleft( e^{-2ax}-2e^{-ax}-e^{-(a+b)x} \mright) \mright| \D x
<\infty.
\end{equation}
Combining~\eqref{eq:integrability_near_zero}
and~\eqref{eq:integrability_infinity}, we obtain $x \mapsto \mathcal I(x)(2x^2)^{-1}( e^{-2ax}-2e^{-ax}-e^{-(a+b)x} ) \in \mathcal L^1((0,\infty))$. In particular,
\[
\lim_{t\downarrow0} \int_t^\infty \mleft| \frac{\mathcal I(x)}{2x^2} \mleft( e^{-2ax}-2e^{-ax}-e^{-(a+b)x} \mright) \mright| \D x
=
\int_0^\infty \mleft| \frac{\mathcal I(x)}{2x^2} \mleft( e^{-2ax}-2e^{-ax}-e^{-(a+b)x} \mright) \mright| \D x
<\infty.
\]

Note that $\lim_{x \to \infty } F(x)\mathcal{I}(x)=0$. Indeed, by Remark~\ref{rem:det_bound_eta} together with~\eqref{eq:bound_I_(x)}, it suffices to show that $e^{ax}\int_a^\infty t e^{-tx} \D t \to 0$ as $x \to \infty$, which follows directly by calculating the integral:
\begin{equation}
    e^{ax}\int_a^\infty te^{-tx} \D t= \frac{1 + ax}{x^2} \to 0, \quad \text{ as }x \to \infty.
\end{equation} Hence, since $\lim_{x \to \infty } F(x)\mathcal{I}(x)=0$, integrating on both sides of~\eqref{eq:integrable_product_f_I} implies that
\begin{equation}
    F(x)\mathcal{I}(x)= \int_x^\infty \frac{\mathcal{I}(t)}{2t^2 }e^{-ta}\mleft( 2+e^{-tb}-e^{-ta}\mright) \D t, \quad \text{ for all }x>0.
\end{equation} Dividing through with $\mathcal{I}(x)$ and using the definition of $\mathcal{I}$ implies, for all $x \ge 1$, that
\begin{align*}
    F(x)
    &=
    \frac{1}{2}\int_x^\infty \frac{e^{-ta}}{t^2}(2+e^{-bt}-e^{-at})\exp \mleft\{ \int_x^t \bigg( as+e^{-s \big( \tfrac{a+b}{2}\big)}+e^{-s a} \bigg)\frac{\D s}{s}\mright\} \D t\\
    &=
    \frac{e^{-xa}}{2x^2}\int_x^\infty \frac{x^2}{t^2}(2+e^{-bt}-e^{-at})\exp \mleft\{ \int_x^t \bigg(e^{-s \big( \tfrac{a+b}{2}\big)}+e^{-s a} \bigg)\frac{\D s}{s}\mright\} \D t.
\end{align*} Next, using that $x^2/t^2=\exp \big(-\int_x^t 2s^{-1} \D s\big)$, it follows that
\begin{equation}\label{eq:reduced_F(x)_form}
    F(x)
    =
    \frac{e^{-xa}}{2x^2}\int_x^\infty (2+e^{-bt}-e^{-at})\exp \mleft\{ \int_x^t \bigg(-2+e^{-s \big(\tfrac{a+b}{2}\big)}+e^{-s a} \bigg)\frac{\D s}{s}\mright\} \D t.
\end{equation} Consider the Laplace--Stieltjes transform of $\eta$, defined by $\wh \eta(x)\coloneqq \int_{[0,\infty)} e^{-sx}\D\eta(s)=x\int_0^\infty \eta(s)e^{-sx}\D s$ for $x>0$. Using the definition of $F$,~\eqref{eq:reduced_F(x)_form}, and Lemma~\ref{lem:eta_func}, it follows that
\begin{equation*}
    \wh\eta (x) 
    =
    xF(x)+\frac{x}{2}\int_{\tfrac{a+b}{2}}^a e^{-sx }\D s 
    =
    \frac{1}{2}\mleft( e^{-x(a+b)/2}-e^{-x a} \mright) +xF(x) \sim \frac{\lambda(a,b)}{x}, \quad \text{ as } x \da 0,
\end{equation*} where $\lambda(a,b)$ is defined in~\eqref{eq:parking_const_number_dep}. We assume for now that $\lambda(a,b)\in (0,\infty)$, but we will show this as the final part of the proof. Using the Tauberian theorem for Laplace--Stieltjes transforms~\cite[Thm~1.7.1]{MR1015093} implies that $\eta(x) \sim \lambda(a,b)x$ as $x \to \infty$, and hence $\lim_{x \to \infty } \eta(x)/x=\lambda(a,b)$. 

Finally, it only remains to prove that $\lambda(a,b)\in (0,\infty)$. It follows directly from the definition of $\lambda(a,b)$ in~\eqref{eq:parking_const_number_dep} that $\lambda(a,b) > 0$. Hence, it remains to prove that $\lambda(a,b)<\infty$. To show this, we note by~\cite[Eq.~(6.2.1),~(6.2.3) \&~(6.2.4)]{NIST:DLMF} together with $\Gamma(0,x)=\int_x^\infty e^{-u}u^{-1} \D u$ for all $x>0$, that
\begin{equation}\label{eq:Walpha_equal}
    \int_0^t \frac{1-e^{-cu}}{u} \D u = \log(ct)+\Gamma(0,ct)+\gamma, \quad \text{ for all }t>0 \text{ and }c>0,
\end{equation} where $\gamma=-\int_0^\infty e^{-u}\log(u)\D u$ is the Euler-Mascheroni constant. Note that the right-hand side of~\eqref{eq:Walpha_equal} is finite for all $t>0$. 

We can now use~\eqref{eq:Walpha_equal} to show that $\lambda(a,b)<\infty$. Note at first that
\begin{equation}
    \int_0^1 (2+e^{-tb}-e^{-ta})\exp \bigg( \int_0^t \frac{-2+e^{-s(a+b)/2}+e^{-as}}{s} \D s\bigg) \D t
    \le 
    3,
\end{equation} since $e^{-tb}-e^{-ta}\le 1$ for all $t >0$, and since $\int_0^t (-2+e^{-s(a+b)/2}+e^{-as})s^{-1} \D s \le 0$ for all $t\in (0,1)$. Thus, it follows that
\begin{equation*}
    \frac{2\lambda(a,b) }{3}-1 \le \int_1^\infty \exp \bigg( \int_0^t \frac{-2+e^{-s(a+b)/2}+e^{-as}}{s} \D s\bigg) \D t \eqqcolon \wt \lambda(a,b),
\end{equation*} and it therefore suffices to show that $\wt\lambda(a,b)<\infty$. By~\eqref{eq:Walpha_equal}, it follows that
\begin{align*}
    \wt\lambda(a,b) 
    &=
    \int_1^\infty \exp \mleft( -\int_0^t \frac{1-e^{-s(a+b)/2}}{s} \D s - \int_0^t \frac{1-e^{-sa}}{s} \D s\mright) \D t\\
    &=
    e^{-2\gamma-\log\big(\tfrac{a^2+ab}{2}\big)}\int_1^\infty e^{-\Gamma\big(0,\tfrac{a+b}{2}t\big)-\Gamma(0,at)} \frac{\D t}{t^2}
    \le 
    e^{-2\gamma-\log\big(\tfrac{a^2+ab}{2}\big)}<\infty.
\end{align*} In the last inequality, we used that $e^{-\Gamma(0,t(a+b)/2)-\Gamma(0,at)} \le 1$ for all $t\ge 1$, and that $\int_1^\infty t^{-2}\D t=1$.
\end{proof} 

\subsection{Quantitative convergence}

\begin{lemma}\label{lem:ROC_first_order}
    There exists some $\ov\lambda(a,b) \in \R$, such that, for all $x > 2a(e+1)$, it holds that
    \begin{equation}\label{eq:gen_roc_eta_tech}
    \bigg|\eta(x) -\ov\lambda(a,b) x +1-\ov\lambda(a,b) \frac{a+b}{2}\bigg|
    \le \begin{dcases}
      \mleft(\frac{2e}{x/a}\mright)^{x/a-3/2} \frac{e^{3/2} K_1}{\sqrt{\pi}}, &\text{ if }a=b,\\
      \mleft(\frac{2e}{x/a}\mright)^{x/a-5/2} \frac{3e^{5/2}(2K_1+K_2)}{\sqrt{\pi}}, &\text{ if }a>b.
    \end{dcases}
\end{equation} where $K_1 \coloneqq 1 + 2a(2/(a+b)+\ov\lambda(a,b)) + \ov\lambda(a,b)\cdot  (a+b)/2>0$ and $K_2 \coloneqq (2/(a+b)+\ov\lambda(a,b))(a-b)/2\ge 0$.
\end{lemma}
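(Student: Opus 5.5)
The plan is to follow the Dvoretzky--Robbins scheme. Put $c_0\coloneqq (a+b)/2$ and $g(x)\coloneqq \eta(x)+1$. Multiplying~\eqref{eq:defn_eta_x} by $x-a$, the recursion for $x>a$ becomes
\[
(x-a)\,g(x)=2(x-a)+\int_0^{x-a}\eta(t)\D t+\int_0^{x-c_0}\eta(t)\D t .
\]
The first step is to check that the affine function $\ell(x)=\kappa(x+c_0)-1$ satisfies this relation exactly for $x$ large, for every $\kappa\in\R$. Indeed, $\int_0^{x-a}\ell+\int_0^{x-c_0}\ell$ is quadratic in $x$; comparing coefficients of $x^2$ and $x$ works identically, and the constant term is absorbed by the correction $-1$. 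Writing $\eta=\ell+\Delta$, the residual $\Delta$ then solves the homogeneous relation $(x-a)\Delta(x)=\int_{0}^{x-a}\Delta+\int_0^{x-c_0}\Delta+E$, where $E$ is a constant coming from the initial segment $[0,a]$, on which $\eta$ is explicitly known by Lemma~\ref{lem:eta_func}.

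Next we work with the window quantities $I_x,S_x$ from~\eqref{eq_defn_I_x,S_x_prime}, i.e.\ the infimum and supremum of $g(c)/(c+c_0)$ over $c\in[x,x+a]$. Inserting the recursion, $g(x)/(x+c_0)$ is a weighted average of the values of $g/(\cdot+c_0)$ at the arguments $x-a$ and $x-c_0$, up to an explicit error term. Monotonicity in $x$ then gives $I_x$ non-decreasing and $S_x$ non-increasing, hence $\ov\lambda\coloneqq\lim I_x=\lim S_x$ and $|g(x)-\ov\lambda(x+c_0)|\le (x+c_0)(S_x-I_x)$.

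The core step is a contraction estimate for the oscillation $\omega(x)\coloneqq S_x-I_x$. Differentiating as in~\eqref{eq:derivative_eta}, the derivative of $h(x)=g(x)/(x+c_0)$ is expressed through averages of $h(x-a)-h(t)$ and $h(x-c_0)-h(t)$, divided by $(x-a)$. I would aim for an inequality of the form
\[
\omega(x)\le \frac{2a}{x-a}\,\omega(x-a)\;+\;\text{(correction)},
\]
so that each step back by $a$ gains a factor of about $2a/x$. Iterating roughly $x/a$ times produces $\prod_k 2/(x/a-k)\approx (2/(x/a))^{x/a}\cdot\big((x/a)!\big)^{-1}(x/a)^{x/a}$. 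Stirling's formula, $n!\ge \sqrt{2\pi n}\,(n/e)^n$, then gives $(2e/(x/a))^{x/a-3/2}/\sqrt{\pi}$ up to the prefactor. The base case is the bounded window near $x\approx 2a(e+1)$, where we use the deterministic bound of Remark~\ref{rem:det_bound_eta} and $|\ov\lambda|$. This is where $K_1$, built from $1$, $2a(2/(a+b)+\ov\lambda)$ and $\ov\lambda c_0$, enters.

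When $a>b$ the two delays $a$ and $c_0$ differ. Then the second integral in the derivative is not aligned with the window of length $a$, leaving a boundary strip of width $(a-b)/2$, which I would control by $K_2$. This mismatch forces one extra factor $x/a$ per unit in the exponent, giving $-5/2$ instead of $-3/2$, and the constant $3e^{5/2}(2K_1+K_2)$.

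I expect the main obstacle to be this $a>b$ contraction. There I would handle the second term by bounding the oscillation over $[x-a,x]$ using the already established window at $x-a$, and absorb the remainder into the polynomial loss. Finally, $\ov\lambda=\lambda(a,b)$ follows from Lemma~\ref{lem:parking_constant_a,b}, but the lemma only needs $\ov\lambda$ to exist.
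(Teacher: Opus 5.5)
Your outline has a genuine gap at its core, and the bookkeeping there does not produce the stated exponents.

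\textbf{The contraction step.} The inequality $\omega(x)\le\frac{2a}{x-a}\omega(x-a)$ is only announced, not derived. The tool you point to, \eqref{eq:derivative_eta}, expresses $\eta'$ through averages of $\eta$ over all of $[0,x-a]$, so it does not localise to windows.

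\textbf{The lost power of $x$.} More seriously, suppose the inequality holds. Iterating it and applying Stirling bounds the oscillation of the \emph{ratio} $g/(\cdot+c_0)$ by roughly $(2e/(x/a))^{x/a-3/2}$. To get back to $|g(x)-\ov\lambda(x+c_0)|$ you must multiply by $x+c_0$. That costs a full power of $x/a$ and gives only the exponent $x/a-5/2$, even when $a=b$. So the $a=b$ case of the statement is not reached.

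The paper avoids this loss by working with the absolute error $f^*(t)=f(t)-\ov\lambda(t+c_0)$. Since $f$ is continuous on $(a,\infty)$ and $I_\varpi\le\ov\lambda\le S_\varpi$, each window $[\varpi,\varpi+a]$ contains a zero $x_\varpi$ of $f^*$. Anchoring the two-point identity \eqref{eq:star_9}, in which all constants cancel, at $x_\varpi$ gives $\mathfrak D(\varpi)\le\frac{a}{\varpi-a}\bigl(\mathfrak D(\varpi-a)+\mathfrak D(\varpi-c_0)\bigr)$ for $\mathfrak D(z)=\sup_{[z,z+a]}|f^*|$. The base values of this recursion are bounded directly by $K_1$ via Remark~\ref{rem:det_bound_eta}.

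If you want to stay with $h=g/(\cdot+c_0)$, you need the exact weights. The delay equation $g(x)+(x-a)g'(x)=g(x-a)+g(x-c_0)$ gives
$(x-a)(x+c_0)h'(x)=(x-\delta)\bigl(h(x-a)-h(x)\bigr)+x\bigl(h(x-c_0)-h(x)\bigr)$, with $\delta=(a-b)/2$.
Hence $\omega(x)\le\frac{a(2x-\delta)}{(x-a)(x+c_0)}\,\omega(x-a)$. The extra factor $\frac{x-\delta/2}{x+c_0}$ telescopes along the iteration and recovers the lost power of $x$ when $a=b$.

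\textbf{The case $a>b$.} Your explanation here does not correspond to any actual step. In the paper the two delays are kept separate and iterated binomially (Lemma~\ref{lem:technical_D_lemma}). A $c_0$-step lowers the argument by less than $a$, so after $k_\varpi=\lceil\varpi/a\rceil-1$ steps the windows at $\varpi-ra-(k_\varpi-r)c_0$ are only known to end below $2a+(k_\varpi-r)\delta$. The deterministic bound therefore gives $K_1+K_2(k_\varpi-r)$ there, and summing yields $(2a)^{k_\varpi}(K_1+K_2k_\varpi/2)$. This linear factor is the sole source of $K_2$ and of the exponent $x/a-5/2$. There is no separate ``boundary strip'' estimate. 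In your oscillation framework the misalignment would cost nothing at all, because $\omega$ is non-increasing and hence $\omega(x-c_0)\le\omega(x-a)$.

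\textbf{Smaller points.}
\begin{itemize}
\item The base case is not a window near $2a(e+1)$. The iteration must run down to windows near the origin; $2a(e+1)$ only enters when the Stirling expression is simplified.
\item Monotonicity of $I_x$ and $S_x$ gives existence of their limits but not equality. The paper proves $S_x-I_x=O(1/x)$ separately.
\item When $a>b$, $\kappa(x+c_0)-1$ does not satisfy your integral form for every $\kappa$. There is a constant mismatch $\delta\bigl(\kappa(a+c_0)/2-1\bigr)$, which the two-point identity avoids.
\end{itemize}
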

For the proof of Lemma~\ref{lem:ROC_first_order}, we follow the idea of the proof from~\cite{Dvoretzky_Robbins}; however, due to the complexity of our setup, many additional technical additions and variations are needed.
\begin{proof}[Proof of Lemma~\ref{lem:ROC_first_order}]
Recall from~\eqref{eq:defn_eta_x} from Lemma~\ref{lem:eta_func}, that,
    \begin{equation*}
        \eta(x + a) = 1 + \frac{1}{x} \mleft( \int_0^x \eta(t)\D t + \int_{\frac{a-b}{2}}^{x + \frac{a-b}{2}} \eta(t)\D t \mright), \quad \text{ for all }x>0.
    \end{equation*}
Now, for $f(x) = \eta(x) + 1$, for all $x>0$, we can apply~\eqref{eq:defn_eta_x} with $\eta(x+a)$, and see that
    \begin{equation}\label{eq:f(x+a)}
        \begin{aligned}
            f(x+a)
            & = \eta(x+a) + 1 = 2 + \frac{1}{x} \mleft( \int_0^x (f(t) - 1)\D t + \int_{\frac{a-b}{2}}^{x + \frac{a-b}{2}} (f(t) - 1)\D t \mright) \\
            & = \frac{1}{x} \mleft( \int_0^x f(t)\D t + \int_{\frac{a-b}{2}}^{x + \frac{a-b}{2}} f(t)\D t \mright), \quad \text{ for all }x>0.
        \end{aligned}
    \end{equation}
Next, applying~\eqref{eq:f(x+a)}, we note that for $0 < x \le y$, it holds that
    \begin{equation}\label{eq:f(y+a)-(x/y)f(x+a)}
        f(y+a) - \frac{x}{y}f(x+a) = \frac{1}{y} \mleft( \int_x^y f(t)\D t + \int_{x+\frac{a-b}{2}}^{y + \frac{a-b}{2}} f(t)\D t \mright).
    \end{equation}
It is now straightforward to check that $f(x) = x + (a+b)/2$ satisfies the equation
    \begin{equation*}
        f(x+a) = \frac{1}{x} \mleft( \int_0^x f(t)\D t + \int_{\frac{a-b}{2}}^{x + \frac{a-b}{2}} f(t)\D t \mright).
    \end{equation*}
Hence, we may plug $f(x) = x + (a+b)/2$ into~\eqref{eq:f(y+a)-(x/y)f(x+a)}, which yields
    \begin{equation}\label{eq:f(y+a)-(x/y)f(x+a)-precise_f}
        y + \frac{3a+b}{2} = \frac{x}{y} \mleft( x + \frac{3a+b}{2} \mright) + \frac{1}{y} \mleft( \int_x^y \mleft( t + \frac{a+b}{2} \mright)\D t + \int_{x+\frac{a-b}{2}}^{y + \frac{a-b}{2}} \mleft( t + \frac{a+b}{2} \mright)\D t \mright).
    \end{equation}
Subtracting~\eqref{eq:f(y+a)-(x/y)f(x+a)-precise_f} multiplied with some $A \in \R$ from equation~\eqref{eq:f(y+a)-(x/y)f(x+a)}, then implies that
    \begin{equation}\label{eq:star_9}
       \begin{aligned}
            f(y+a)
        & - A\mleft( y + \frac{3a+b}{2} \mright) = \frac{x}{y} \mleft( f(x+a) - A\mleft( x + \frac{3a+b}{2} \mright) \mright) \\
        & \quad + \frac{1}{y} \mleft\{ \int_x^y \mleft( f(t) - A\mleft( t + \frac{a+b}{2} \mright) \mright)\D t + \int_{x+\frac{a-b}{2}}^{y + \frac{a-b}{2}} \mleft( f(t) - A\mleft( t + \frac{a+b}{2} \mright) \mright)\D t \mright\}.
       \end{aligned}
    \end{equation}
Next, we define 
\begin{equation}\label{eq_defn_I_x,S_x}
    I_x \coloneqq \inf_{x \le c \le x+a} \frac{f(c)}{c+(a+b)/2}, \quad \text{ and } \quad S_x \coloneqq \sup_{x \le c \le x+a} \frac{f(c)}{c+(a+b)/2}, \quad \text{ for all }x\ge0.
\end{equation}
Note by definition of $I_x$, that $f(y)-I_x\cdot (y+(a+b)/2)\ge 0$ for all $y \in [x,x+a]$. Hence, using~\eqref{eq:star_9} with $A=I_x$, we see that $f(y+a)-I_x\cdot (y+(3a+b)/2)\ge 0$ for all $y \in [x,x+(a+b)/2]$, which implies that $f(y)-I_x\cdot (y+(a+b)/2)\ge 0$ for all $y \in [x+a,x+(3a+b)/2]$. Thus, it follows altogether that 
\begin{equation}\label{eq:ineq_i_x_i_y}
    \frac{f(y)}{(y+(a+b)/2)}\ge I_x, \quad \text{ for all } y \in \mleft[ x,x+\frac{3a+b}{2}\mright].
\end{equation} We can now use~\eqref{eq:ineq_i_x_i_y}, to prove that $I_x \le I_z$ for all $z \in [x,x+(a+b)/2]$. Indeed, fix some $z \in [x,x+(a+b)/2]$, in which case $[z,z+a] \subset [x,x+(3a+b)/2]$, since $a+b\le (3a+b)/2$. Then, taking infimum over all $y \in [z,z+a]$ in~\eqref{eq:ineq_i_x_i_y}, implies that $I_x \le I_z$. Moreover, this local monotonicity implies that $I_x \le I_y$ for arbitrary $0\le x\le y<\infty$. Indeed, let $N \coloneqq \lceil 2(y-x)/(a+b)\rceil $, and, if $x<y$, set $x_k\coloneqq x+ k(y-x)/N$ for all $k=0,\ldots, N$. Then, $x_{k+1}-x_k\le (a+b)/2$, and therefore the preceding argument gives $I_{x_k}\le I_{x_{k+1}}$ for every $k=0,\ldots,N-1$. Consequently, $I_x=I_{x_0}\le I_{x_1}\le\cdots\le I_{x_N}=I_y$. Thus, $I_x\le I_y$ for all $0\le x\le y<\infty$.

Following the exact same steps for $S_x$, again using~\eqref{eq:star_9} and definition~\eqref{eq_defn_I_x,S_x}, it follows that $S_x \ge S_y$ for all $0\le x \le y<\infty$. We now show that $I_\infty \coloneqq \lim_{x \to \infty} I_x$ and $S_\infty \coloneqq \lim_{x \to \infty} S_x$ exist and that $-\infty < I_\infty \le S_\infty <\infty$. First, since $I_x \le I_y$ for all $0 \le x \le y<\infty$, it follows that $I_x \le \liminf_{y \to \infty} I_y$ for all $x \ge 0$. Hence, it follows that
\begin{equation*}
    \liminf_{x \to \infty} I_x \le \limsup_{x \to \infty} I_x \le \liminf_{y \to \infty} I_y,
\end{equation*} which therefore yields that $\lim_{x \to \infty} I_x =I_\infty$ exists in $\bar{\R}$. Moreover, by definition of $\eta$, the quantity $I_0$ can be calculated exactly, and it follows that, for all $x \ge 0$, $ I_\infty \ge I_x \ge I_0>-\infty$. Hence, $I_\infty$ exists and $I_\infty >-\infty$. By an analogous argument it follows that $S_\infty$ exists and that $S_\infty <\infty$, and hence 
\begin{equation}\label{eq:bounded_over_lambda_ixsx}
    -\infty < I_0 \le I_x \le I_\infty \le S_\infty \le S_y \le S_0 < \infty, \quad \text{ for all }x,y\ge 0.
\end{equation}

We will now show that $I_\infty =S_\infty$. For all $x,y >0$, we have from~\eqref{eq:f(y+a)-(x/y)f(x+a)}, that
\begin{equation*}
    f(y+a)-f(x+a)=\frac{x-y}{y} f(x+a) + \frac{1}{y} \mleft\{  \int_{x+\frac{a-b}{2}}^{y+\frac{a-b}{2}} f(t)\D t + \int_{x}^{y} f(t)\D t \mright\}.
\end{equation*} From Remark~\ref{rem:det_bound_eta}, we recall that $\eta(x) \le 2x/(a+b)$ for all $x \ge 0$, implying that for $c \coloneqq \max\{2/(a+b), 1\}$ we have $f(x) \le c(x + 1)$ for all $x \ge 0$. Using this, it follows from the display above that
\begin{align*}
    &\sup_{x \le y \le x+a } |f(y+a)-f(x+a)| \\ &\qquad \le  c \sup_{x \le y \le x+a }  \mleft( \frac{|x-y| (x+a+1)}{y}+\frac{1}{y} \mleft\{  \int_{x+\frac{a-b}{2}}^{y+\frac{a-b}{2}} (t+1)\D t + \int_{x}^{y} (t+1) \D t \mright\} \mright)\\
    &\qquad = c \sup_{x \le y \le x+a } \mleft( \frac{|y-x|(x+a+1)}{y}+\frac{ (y-x)(y+x+a-b)+(y-x)(y+x)}{2y}  + \frac{2(y-x)}{y}\mright).
\end{align*}
Next, under the supremum, we note that $y-x \le a$, $y+x \le 2x+a$, and $1/y \le 1/x$ for all $x>0$, which implies that
\begin{equation}\label{eq:sup_norm_bound_f}
     \sup_{x \le y \le x+a } |f(y+a)-f(x+a)| \le c \mleft(  3a + \frac{5a^2 + 6a -ab}{2x} \mright) \eqqcolon \wt c + \frac{\wh c}{x}, \quad \text{ for all }x>0.
\end{equation} Recall from Lemma~\ref{lem:eta_func}, that $\eta$ is a non-decreasing function, implying the same for $f$. Hence, $\sup_{x \le y \le x+a } |f(y+a)-f(x+a)|=\sup_{x \le y \le x+a } f(y+a)-f(x+a)$. By definition of $S_x$ from~\eqref{eq_defn_I_x,S_x} and by~\eqref{eq:sup_norm_bound_f}, it follows for all $x>0$, that
\begin{align*}
    S_{x+a}-\frac{f(x+a)}{x+a+(a+b)/2}&= \sup_{x \le y \le x+a} \frac{f(y+a)}{y+a+(a+b)/2}-\frac{f(x+a)}{x+a+(a+b)/2}\\
    &\le  \frac{\sup_{x \le y \le x+a} f(y+a)-f(x+a)}{x+a+(a+b)/2} \le \frac{\wt c+\wh c /x}{x+a+(a+b)/2}.
\end{align*} Thus, for all $x>a$, we may shift from $x+a$ to $x$, in the equation above, implying that
\begin{equation*}
    S_x- \frac{f(x)}{x+(a+b)/2} \le \frac{\wt c + \wh c /(x-a)}{x+(a+b)/2}, \quad \text{ for all }x>a.
\end{equation*} Thus, taking supremum, we see, that
\begin{align*}
    0 \le S_x-I_x &\le \sup_{x \le t \le x+a} S_t - \inf_{x \le t \le x+a} \frac{f(t)}{t+(a+b)/2} \le \sup_{x \le t \le x+a} \mleft( S_t-\frac{f(t)}{t+(a+b)/2}\mright)\\
    &\le \sup_{x \le t \le x+a} \frac{\wt c + \wh c /(t-a)}{t+(a+b)/2} \le \frac{\wt c + \wh c /(x-a)}{x+(a+b)/2}, \quad \text{ for all }x>a.
\end{align*} Hence, taking $x \to \infty$, it follows that $S_\infty = I_\infty$, as we wanted to show, and we will now call this quantity $\ov{\lambda}(a,b)=\ov \lambda$ where we suppress the dependence on $a$ and $b$.

Since $\eta$ is continuous for all $x>a$, we have that $f$ is a continuous function for all $x>a$, and will therefore attain its infimum and supremum on any compact interval. Hence, since $I_x \le \ov \lambda \le S_x$, it follows for all $x >a$ that there exists some $x' \in [x,x+a]$, such that 
\begin{equation*}
    \mleft| \frac{f(x')}{x'+(a+b)/2}-\ov \lambda\mright|=0, \qquad \text{ i.e.\ }  \qquad f^*(x')\coloneqq f(x')-\ov \lambda \cdot (x'+(a+b)/2)=0.
\end{equation*} Now, consider some $\varpi>a$, and the interval $[\varpi,\varpi+a]$, where $x_\varpi \in [\varpi,\varpi+a]$ is given such that $f^*(x_\varpi)=0$. Let $y+a \in [x_\varpi,\varpi+a]$, then~\eqref{eq:star_9} with $A=\ov \lambda$, implies that
\begin{align*}
    |f^*(y+a)|
    &=
    \mleft|\frac{x_\varpi-a}{y} f^*(x_\varpi)+\frac{1}{y} \mleft\{ \int_{x_\varpi-a}^{y} f^*(t)\D t+\int_{x_\varpi-a+\frac{a-b}{2}}^{y+\frac{a-b}{2}} f^*(t) \D t \mright\} \mright|\\
    &\le 
    \frac{y-x_\varpi+a}{y} \mleft( \sup_{x_\varpi-a \le t \le y} |f^*(t)|+ \sup_{x_\varpi-a+\frac{a-b}{2}\le t \le y+\frac{a-b}{2}}|f^*(t)|\mright).
\end{align*} Next, since $y-x_\varpi+a \le \varpi+a-x_\varpi \le a$, it follows that
\begin{equation*}
    |f^*(y+a)| \le \frac{a}{\varpi-a} \mleft( \sup_{\varpi-a \le t \le \varpi} |f^*(t)|+ \sup_{\varpi-a+\frac{a-b}{2}\le t \le \varpi +\frac{a-b}{2}}|f^*(t)| \mright).
\end{equation*} Thus, for $\mathfrak{D}(z) \coloneqq \sup_{z \le t \le z+a}|f^*(t)|$, it holds that
\begin{equation}\label{eq:part_1_f^*_bound}
    \sup_{y \in [x_\varpi,\varpi + a]}|f^*(y)|=\sup_{y+a \in [x_\varpi, \varpi+a]}|f^*(y+a)| \le \frac{a}{\varpi-a} (\mathfrak{D}(\varpi-a)+\mathfrak{D}(\varpi-a+(a-b)/2)).
\end{equation}
Similarly, if $x_\varpi \in [\varpi, \varpi+a]$ and $x+a \in [\varpi,x_\varpi]$, then~\eqref{eq:star_9} with $A=\ov \lambda$ and $y=x_\varpi-a$, implies that 
\begin{equation*}
    0=f^*(x_\varpi) = \frac{x}{x_\varpi-a} f^*(x+a)+ \frac{1}{x_\varpi-a} \mleft\{ \int_x^{x_\varpi-a} f^*(t) \D t+ \int_{x+\frac{a-b}{2}}^{x_\varpi-a+\frac{a-b}{2}} f^*(t)\D t\mright\}.
\end{equation*} Since,  $x_\varpi \in [\varpi,\varpi+a]$ and $x+a \in [\varpi,x_\varpi]$, we have that $x_\varpi -a-x \le a$ and $1/x \le 1/(\varpi-a)$, giving us
\begin{align*}
    |f^*(x+a)| &\le \frac{x_\varpi-a-x}{x} \mleft( \sup_{ x \le t \le x_\varpi-a} |f^*(t)|+\sup_{x+\frac{a-b}{2}\le t \le x_\varpi-a+\frac{a-b}{2}}|f^*(t)|\mright)\\
    &\le \frac{a}{\varpi-a}  \mleft( \sup_{ \varpi-a \le t \le \varpi} |f^*(t)|+\sup_{\varpi-a+\frac{a-b}{2}\le t \le \varpi+\frac{a-b}{2}}|f^*(t)|\mright).
\end{align*} Altogether, we see that
\begin{equation}\label{eq:part_2_f^*_bound}
    \sup_{x \in [\varpi,x_\varpi]}|f^*(x)|=\sup_{x+a \in [\varpi,x_\varpi]}|f^*(x+a)| \le \frac{a}{ \varpi-a} (\mathfrak{D}( \varpi-a)+\mathfrak{D}(\varpi-a+(a-b)/2)).
\end{equation}

From~\eqref{eq:part_1_f^*_bound} \&~\eqref{eq:part_2_f^*_bound}, we can now conclude that
\begin{equation}\label{eq:gen_bound_th_D_xi}
    \mathfrak{D}(\varpi)=\sup_{\varpi \le t \le \varpi+a} |f^*(t)| \le \frac{a}{\varpi-a} \mleft(\mathfrak{D}(\varpi-a)+\mathfrak{D}(\varpi-(a+b)/2) \mright), \quad \text{ for all }\varpi>a.
\end{equation}

The idea is now to iterate the bound in~\eqref{eq:gen_bound_th_D_xi}, so that we get a concrete bound on $\mathfrak{D}(\varpi)$ for all $\varpi>a$. Let $\varpi>a$, and define $k_\varpi \coloneqq \inf\{n \in \N: \varpi-an \le a\}$. From~\eqref{eq:gen_bound_th_D_xi} together with Lemma~\ref{lem:technical_D_lemma} below, we note that
\begin{equation}\label{eq:D_bound_general}
    \mathfrak{D}(\varpi) \le \frac{a^{k_\varpi}}{(\varpi-a)(\varpi-2a)\cdots (\varpi-k_\varpi a)} \sum_{r=0}^{k_\varpi} \binom{k_\varpi}{r} \mathfrak{D}\mleft(\varpi -ra - (k_\varpi-r)\mleft(\frac{a+b}{2}\mright)\mright).
\end{equation} Let now $r \in \{0 , \ldots , k_\varpi\}$, and note that $\varpi-k_\varpi a \le a$, implies that 
\begin{equation}\label{eq:ci_int_bound}
    \varpi -r a + a - (k_\varpi-r)\mleft(\frac{a+b}{2}\mright) 
    = (\varpi -k_\varpi a) + a + (k_\varpi-r)\mleft(\frac{a-b}{2}\mright) \le 2a + (k_\varpi-r) \mleft( \frac{a-b}{2}\mright).
\end{equation}
Recall from Remark~\ref{rem:det_bound_eta}, that $\eta(x) \le cx$ for $c=2/(a+b)$. Hence, from this,~\eqref{eq:ci_int_bound} and the definition of $\mathfrak{D}$, it holds that
\begin{equation}\label{eq:D_bound_k_1,k_2}
\begin{aligned}
     \mathfrak{D}\mleft(\varpi -ra - (k_\varpi-r)\mleft(\frac{a+b}{2}\mright)\mright) 
    \le \sup_{0 \le t \le 2a + (k_\varpi-r)\mleft(\frac{a-b}{2}\mright)} \mleft(c t+1+\ov\lambda \mleft(t+\frac{a+b}{2}\mright)\mright) \\
    \le  1 + 2a(c+\ov\lambda) + \ov\lambda \mleft(\frac{a+b}{2}\mright) + (k_\varpi-r)(c+\ov\lambda)\mleft(\frac{a-b}{2}\mright) = K_1 + K_2 (k_\varpi -r),
\end{aligned}
\end{equation} where $K_1 \coloneqq 1 + 2a(c+\ov\lambda) + \ov\lambda (a+b)/2>0$ and $K_2 \coloneqq (c+\ov\lambda)(a-b)/2\ge 0$. Note that $K_2=0$ if $a=b$. Using $\sum_{i=0}^n \binom{n}{i}=2^n$ and $\sum_{i=0}^n \binom{n}{i}(n-i)=n2^{n-1}$, together with~\eqref{eq:D_bound_general} and~\eqref{eq:D_bound_k_1,k_2}, we then get
\begin{equation}
     \mathfrak{D}(\varpi) 
     \le \frac{a^{k_\varpi}\sum_{r=0}^{k_\varpi} \binom{k_\varpi}{r} ( K_1 + K_2 (k_\varpi -r))}{(\varpi-a)(\varpi-2a)\cdots (\varpi-k_\varpi a)} 
     =\frac{(2a)^{k_\varpi}\mleft( K_1 +K_2 k_\varpi/2\mright)}{(\varpi-a)(\varpi-2a)\cdots (\varpi-k_\varpi a)}.
\end{equation}

We note now, that $k_\varpi = \ceil{\varpi/a}-1$, and let $x = \varpi/a>1$, yielding $k_\varpi=\ceil{x}-1$. Next, applying the property $\Gamma(x+1)=x\Gamma(x)$ for all $x>0$ of the Gamma-function, implies that
\begin{equation*}
    (\varpi-a)(\varpi-2a)\cdots (\varpi-k_\varpi a) = a^{k_\varpi} \prod_{j=1}^{k_\varpi} (x-j) = a^{k_\varpi} \frac{\Gamma(x)}{\Gamma(\rho_x)},
\end{equation*} where $\rho_x= x-k_\varpi =x-\ceil{x}+1 \in (0,1]$. Hence, it follows that
\begin{equation*}
    \mathfrak{D}(\varpi) 
     \le 
     \frac{2^{x-\rho_x}\mleft( K_1 +K_2 (x-\rho_x)/2\mright) \Gamma(\rho_x)}{\Gamma(x)}.
\end{equation*} Next, by~\cite[Eq.~(3.9)]{ArtinGamma1964}, and the standard rule $\Gamma(x+1)=x\Gamma(x)$, Stirling's inequality for the Gamma function follows, i.e.\ $\sqrt{2\pi x}(x/e)^x<\Gamma(x+1)< \sqrt{2\pi x}(x/e)^x e^{1/12}$ for all $x \ge 1$. Hence, for all $x\ge 2$ (i.e.\ $\varpi \ge 2a$, which we will see later is sufficient), it follows that
\begin{equation}\label{eq:D_bound_stirling}
\begin{aligned}
     \mathfrak{D}(\varpi) 
     &\le  
     2^{-\rho_x+1} \Gamma(\rho_x) \mleft(\frac{2e}{x-1}\mright)^{x-1} \frac{K_1+K_2 (x-\rho_x)/2}{\sqrt{2\pi (x-1)}} \\
     &\le 2 \Gamma(\rho_x)
     \mleft(\frac{2e}{x-1}\mright)^{x-1} \frac{K_1+K_2 x/2}{\sqrt{2\pi (x-1)}} 
     =
     2\Gamma(\rho_x) \mleft(\frac{2e}{\varpi/a-1}\mright)^{\varpi/a-1} \frac{K_1+K_2 \varpi/(2a)}{\sqrt{2\pi (\varpi/a-1)}}.
\end{aligned}
\end{equation} 

For any $t>2a(e+1)$, there exists some $n \in \N$ such that $na \le t \le na+a$. In this case, for $\varpi=an$, we note that $x=\varpi/a=n \in \N$, implying that $\rho_x\equiv 1$ and $\Gamma(\rho_x)=1$. Hence, by~\eqref{eq:D_bound_stirling}, it follows for $an \le t \le a(n+1)
$, that
\begin{align}
    |\eta(t) -\ov\lambda t +1-\ov\lambda (a+b)/2| 
    &\le 
    \mathfrak{D}(an) 
    \le 
    2\mleft(\frac{2e}{n-1}\mright)^{n-1} \frac{K_1+K_2 n/2}{\sqrt{2\pi (n-1)}} \nonumber\\
    & \le 
    \mleft(\frac{2e}{t/a-2}\mright)^{t/a-2} \frac{2K_1+K_2 t/a}{\sqrt{2\pi (t/a-2)}}\nonumber \\
    &
    \le \begin{dcases}
      \mleft(\frac{2e}{t/a-2}\mright)^{t/a-3/2} \frac{K_1}{\sqrt{\pi e}}, &\text{ if }a=b,\\
      \mleft(\frac{2e}{t/a-2}\mright)^{t/a-5/2} \frac{(6K_1+3K_2)\sqrt{e}}{\sqrt{\pi}}, &\text{ if }a>b.
    \end{dcases}\label{eq:t/abound_almost}
\end{align} In the case $a>b$, we used the inequalities that $t>3a$ (i.e.\ $t/a>1$ and $t/a-2>1$), and hence
\begin{equation*}
    \frac{K_1+K_2 t/(2a)}{\sqrt{2\pi (t/a-2)}} 
    \le 
    \frac{(K_1+K_2/2) t/a}{\sqrt{2\pi (t/a-2)}} \le \frac{(K_1+K_2/2) (t/a-2+2)}{\sqrt{2\pi (t/a-2)}} \le  \frac{3(K_1+K_2/2) (t/a-2)}{\sqrt{2\pi (t/a-2)}}.
\end{equation*}

Next, from~\eqref{eq:t/abound_almost}, it follows we can note, for $m \in \{3/2,5/2\}$, that
\begin{equation}\label{eq:bound_exp_rate}
    \mleft(\frac{2e}{t/a-2}\mright)^{t/a-m}
    = 
    \mleft(\frac{2e}{t/a}\mright)^{t/a-m} \mleft(\frac{t/a}{t/a-2}\mright)^{t/a-m} 
    \le
    e^2\mleft(\frac{2e}{t/a}\mright)^{t/a-m} .
\end{equation} To see that the final inequality holds, we note that it is equivalent to proving that $(1-2/y)^{m-y}<e^2$ for all $y>3$, which holds if $G(y)=(m-y)\log(1-2/y)<2$ for all $y>3$. Note that $G'(y)=2(m-y)/(y^2-2y)-\log(1-2/y) \ge 2(m-y)/(y^2-2y)+ 2/(y-1)$ for all $y >3$, which follows from the bound $-\log(1-z) \ge 2z/(2-z)$ for all $z \in (0,1)$. Hence, since 
\begin{equation*}
    G'(y)\ge  \frac{2(m-y)}{y^2-2y}+ \frac{2}{y-1}=2\frac{y(m-1)-m}{y(y-2)(y-1)}>0, \quad \text{ for } m \in \{3/2,5/2\} \text{ and all }y>3,
\end{equation*} it follows that $G$ is a strictly increasing function. We can therefore conclude $G(y) < \lim_{y\to \infty} G(y)=2$ for all $y>3$, finishing the argument.

Thus, by~\eqref{eq:t/abound_almost} \&~\eqref{eq:bound_exp_rate}, it follows for all $t>2a(e+1)$, that 
\begin{equation*}
    |\eta(t) -\ov\lambda t +1-\ov\lambda (a+b)/2| \le \begin{dcases}
      \mleft(\frac{2e}{t/a}\mright)^{t/a-3/2} \frac{e^{3/2} K_1}{\sqrt{\pi}}, &\text{ if }a=b,\\
      \mleft(\frac{2e}{t/a}\mright)^{t/a-5/2} \frac{3e^{5/2}(2K_1+K_2)}{\sqrt{\pi}}, &\text{ if }a>b.
    \end{dcases}
    \qedhere
\end{equation*}
\end{proof}
\begin{lemma}\label{lem:technical_D_lemma}
Let $a>0$ and $b\in [0,a]$, and set $c\coloneqq (a+b)/2\le a$. Suppose $\mathfrak{D}:(0,\infty)\to[0,\infty)$ satisfies, for all $\varpi>a$,
\begin{equation}\label{eq:one-step}
\mathfrak{D}(\varpi)\le \frac{a}{\varpi-a}\Big(\mathfrak{D}(\varpi-a)+\mathfrak{D}(\varpi-c)\Big).
\end{equation}
For $\varpi>a$ define $k_\varpi\coloneqq \inf\{n\in\mathbb{N}: \varpi-an\le a \}$. Then
\begin{equation}\label{eq:iterated}
\mathfrak{D}(\varpi)\le \frac{a^{k_\varpi}}{(\varpi-a)(\varpi-2a)\cdots(\varpi-k_\varpi a)} \sum_{r=0}^{k_\varpi}\binom{k_\varpi}{r} \mathfrak{D}\big(\varpi-ra-(k_\varpi-r)c\big).
\end{equation}
\end{lemma}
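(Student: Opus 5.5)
The plan is to prove, by induction on the number of levels $j$, the intermediate bound
\begin{equation}\label{eq:level_j_plan}
\mathfrak{D}(\varpi)\le \frac{a^{j}}{\prod_{i=1}^{j}(\varpi-ia)}\sum_{r=0}^{j}\binom{j}{r}\,\mathfrak{D}\big(\varpi-ra-(j-r)c\big),\qquad j=0,1,\dots,k_\varpi .
\end{equation}
Here the empty product is $1$. The case $j=0$ is trivial, and the case $j=k_\varpi$ is exactly \eqref{eq:iterated}. Throughout, fix $\varpi>a$. By the definition of $k_\varpi$, we have $\varpi-ja>a$ for every integer $0\le j<k_\varpi$. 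In particular, all factors $\varpi-ia$ with $1\le i\le k_\varpi$ are strictly positive, so every prefactor in \eqref{eq:level_j_plan} is well defined and positive.

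For the inductive step from $j$ to $j+1$ (with $j<k_\varpi$), I consider a generic argument $\varpi_r\coloneqq\varpi-ra-(j-r)c$. Since $c\le a$, I get $\varpi_r\ge \varpi-ja>a$. Hence the one-step bound \eqref{eq:one-step} applies to $\mathfrak{D}(\varpi_r)$ and gives
\begin{equation}
\mathfrak{D}(\varpi_r)\le \frac{a}{\varpi_r-a}\Big(\mathfrak{D}\big(\varpi-(r+1)a-(j-r)c\big)+\mathfrak{D}\big(\varpi-ra-(j+1-r)c\big)\Big).
\end{equation}
The prefactor is made uniform in $r$ as follows. From $\varpi_r-a\ge\varpi-(j+1)a>0$ I obtain $a/(\varpi_r-a)\le a/(\varpi-(j+1)a)$. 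This replacement is legitimate because $\mathfrak{D}\ge0$, so enlarging the prefactor only increases the right-hand side.

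Next I substitute this into \eqref{eq:level_j_plan} at level $j$, which is allowed since all binomial weights are nonnegative. The result is a sum over $r$ of $\binom{j}{r}$ times two terms: one indexed by $(r+1,\,j-r)$ and one by $(r,\,j+1-r)$. Reindexing and using Pascal's rule $\binom{j}{r-1}+\binom{j}{r}=\binom{j+1}{r}$ collects everything into
\begin{equation}
\sum_{r=0}^{j+1}\binom{j+1}{r}\,\mathfrak{D}\big(\varpi-ra-(j+1-r)c\big),
\end{equation}
with the new prefactor $a^{j+1}/\prod_{i=1}^{j+1}(\varpi-ia)$. This is \eqref{eq:level_j_plan} at level $j+1$, and the induction ends at $j=k_\varpi$.

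The only delicate point is the domain check in the inductive step. I need every argument at level $j<k_\varpi$ to exceed $a$, so that \eqref{eq:one-step} may be applied, and every denominator to be positive. Both follow from $c\le a$ together with the minimality in the definition of $k_\varpi$. Note that at level $k_\varpi$ the arguments may drop to $a$ or below, but no further expansion is needed there, so this causes no difficulty.
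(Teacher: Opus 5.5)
Your proof is correct and takes essentially the same route as the paper. Both argue by induction on the number of iterations, use $c\le a$ to bound the $r$-dependent denominators by the worst case $\varpi-(j+1)a$, and collect the binomial weights with Pascal's rule. The difference is cosmetic: the paper expands at the root, applying the induction hypothesis (stated for all arguments $>ka$) at $\varpi-a$ and $\varpi-c$ and comparing $\mathfrak{P}_k(\varpi-c)\le\mathfrak{P}_k(\varpi-a)$; you fix $\varpi$ and expand every leaf at the current level, which makes the domain check via the minimality of $k_\varpi$ slightly more transparent.
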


\begin{proof}
We first prove, by induction on $k\in\mathbb{N}$, that for every $\varpi>ka$,
\begin{align}
\mathfrak{D}(\varpi)
&\le 
\frac{a^{k}}{\prod_{j=1}^{k}(\varpi-ja)} \sum_{r=0}^{k}\binom{k}{r}  \mathfrak{D}\big(\varpi-ra-(k-r)c\big)= \mathfrak{P}_k(\varpi) \mathfrak{S}_k(\varpi), \quad \text{where}\label{eq:Ik}\\
    \mathfrak{P}_k(\varpi) 
    &\coloneqq 
    \frac{a^{k}}{\prod_{j=1}^{k}(\varpi-ja)}, \quad \text{ and }\quad \mathfrak{S}_k(\varpi) \coloneqq \sum_{r=0}^{k}\binom{k}{r} \mathfrak{D}\big(\varpi-ra-(k-r)c\big).\nonumber
\end{align}

The initial step $k=1$, follows directly from~\eqref{eq:one-step}, since we by definition of $\mathfrak{P}_1$ and $\mathfrak{S}_1$, have that $\mathfrak{P}_1(\varpi)=a/(\varpi-a)$ and $\mathfrak{S}_1(\varpi)=\mathfrak{D}(\varpi-a)+\mathfrak{D}(\varpi-c)$. Assume now that~\eqref{eq:Ik} holds for some $k\ge1$ and all $\zeta>ka$. Fix $\varpi>(k+1)a$. Using~\eqref{eq:one-step} at $\varpi$ and then the induction hypothesis at $\varpi-a$ and $\varpi-c$ (note that $\varpi-a>ka$ and $\varpi-c>ka$), we get
\[
\mathfrak{D}(\varpi)
\le 
\frac{a}{\varpi-a}\Big(\mathfrak{D}(\varpi-a)+\mathfrak{D}(\varpi-c)\Big)\le \frac{a}{\varpi-a}\Big(\mathfrak{P}_k(\varpi-a) \mathfrak{S}_k(\varpi-a)+\mathfrak{P}_k(\varpi-c) \mathfrak{S}_k(\varpi-c)\Big).
\]
Because $c\le a$, for each $1\le j\le k$ we have $\varpi-c-ja\ge\varpi-a-ja=\varpi-(j+1)a$, hence
\[
\mathfrak{P}_k(\varpi-c)=\frac{a^k}{\prod_{j=1}^k(\varpi-c-ja)}
\le
\frac{a^k}{\prod_{j=2}^{k+1}(\varpi-ja)} 
= 
\mathfrak{P}_k(\varpi-a).
\]
Therefore, $\mathfrak{D}(\varpi)\le a(\varpi-a)^{-1} \mathfrak{P}_k(\varpi-a) (\mathfrak{S}_k(\varpi-a)+\mathfrak{S}_k(\varpi-c))$. A direct reindexing in the sum, and an application of Pascal's identity, yield
\[
\begin{aligned}
\mathfrak{S}_k(\varpi-a)+\mathfrak{S}_k(\varpi-c)
&=
\sum_{r=0}^k\binom{k}{r}
\Big(\mathfrak{D}(\varpi-(r+1)a-(k-r)c)+\mathfrak{D}(\varpi-ra-(k-r+1)c)\Big)\\
&=
\sum_{s=0}^{k+1}\mleft(\binom{k}{s-1}+\binom{k}{s}\mright)
\mathfrak{D}\big(\varpi-sa-(k+1-s)c\big)\\
&=
\sum_{s=0}^{k+1}\binom{k+1}{s} \mathfrak{D}\big(\varpi-sa-(k+1-s)c\big)
=
\mathfrak{S}_{k+1}(\varpi).
\end{aligned}
\]
Finally, from the definition of $\mathfrak{P}_k$, we see that
\[
\frac{a}{\varpi-a} \mathfrak{P}_k(\varpi-a)
=
\frac{a}{\varpi-a}\cdot\frac{a^k}{\prod_{j=2}^{k+1}(\varpi-ja)}
=
\frac{a^{k+1}}{\prod_{j=1}^{k+1}(\varpi-ja)}
=
\mathfrak{P}_{k+1}(\varpi).
\]
Hence, $\mathfrak{D}(\varpi)\le \mathfrak{P}_{k+1}(\varpi) \mathfrak{S}_{k+1}(\varpi)$, completing the induction step.

Now take $k=k_\varpi$. By definition of $k_\varpi$ we have $\varpi>k_\varpi a$, so~\eqref{eq:Ik} applies and yields exactly~\eqref{eq:iterated}.
\end{proof}

\begin{proof}[Proof of Theorem~\ref{thm:main_theorem}]
The first conclusion of the theorem follows directly from Lemma~\ref{lem:parking_constant_a,b}. 
    
For the second conclusion of the theorem, we note that by the proof of Lemma~\ref{lem:ROC_first_order}, it follows that \begin{equation*}
    \lim_{x \to \infty }\inf_{x \le c \le x+a}\frac{\eta(c)+1}{c+(a+b)/2} =I_\infty = \ov\lambda(a,b)=S_\infty=\lim_{x \to \infty }\sup_{x \le c \le x+a}\frac{\eta(c)+1}{c+(a+b)/2},
    \end{equation*} and since $\lambda(a,b)=\lim_{x \to \infty} \eta(x)/x$ exists and is finite by Lemma~\ref{lem:parking_constant_a,b}, it follows by uniqueness of limits, that $\ov\lambda(a,b)=\lambda(a,b)$ for all $0 \le b \le a <\infty$ with $a>0$. Hence, the second conclusion on the rate of convergence follows directly from Lemma~\ref{lem:ROC_first_order}.
\end{proof}

\subsection{Worst trapezium}
\begin{proof}[Proof of Lemma~\ref{lem:worst_trap_exists_uniq}]
For $x\in(0,1]$, set $\alpha(x)\coloneqq (1+x)/2$ and $\Phi(x,t) \coloneqq \int_0^t (-2+e^{-\alpha(x)s}+e^{-s})s^{-1}\D s$ for all $t >0$. Furthermore, write $W_x(t)\coloneqq e^{\Phi(x,t)}$ and $A_x(t)\coloneqq 2+e^{-xt}-e^{-t}$. By Corollary~\ref{cor:parking_constant}, it follows that $ \xi(1,x) = 4^{-1}(1+x) \int_0^\infty A_x(t)W_x(t)\D t$. On every compact subinterval of $(0,1]$, differentiation under the integral sign is justified by dominated convergence. Consequently,
\begin{equation}\label{eq:defn_Xi_correct}
    \Xi(x)
    \coloneqq
    \frac{\D}{\D x}\xi(1,x)
    =
    \frac{1}{4} \int_0^\infty J_x(t)W_x(t)\D t, \quad \text{where}\quad 
    J_x(t)
    \coloneqq
    A_x(t)e^{-\alpha(x)t} - 2\alpha(x)t e^{-xt}.
\end{equation} We will prove that $\Xi$ has exactly one zero in $(0,1)$. The proof is
divided into four steps.

\textbf{Step 1:} Define
\begin{equation}\label{eq:defn_H_worst}
    H_x(t)
    \coloneqq
    A_x(t)(1-e^{-t}) - t(e^{-xt}+e^{-t}).
\end{equation}
We claim that
\begin{equation}\label{eq:Xi_H_representation}
    \Xi(x)
    =
    \frac14
    \int_0^\infty
    H_x(t)W_x(t)\D t.
\end{equation}
Indeed, $\partial_t\Phi(x,t)
    = (-2+e^{-\alpha(x)t}+e^{-t})t^{-1}$ and $\partial_t A_x(t)=A_x'(t) = -xe^{-xt}+e^{-t}$. Applying these equalities, applying the product rule thrice, and recalling the definition of $W_x(t)$, yields
\begin{equation}\label{eq:derivative_W_A_J_H}
\begin{aligned}
    \frac{1}{W_x(t)} \frac{\D}{\D t} \mleft( tA_x(t)W_x(t) \mright)
&=
A_x(t) + tA_x'(t) + tA_x(t)\partial_t\Phi(x,t)\\
&=
A_x(t) \big( e^{-\alpha(x)t}+e^{-t}-1 \big) + t(e^{-t}-xe^{-xt}) 
=
J_x(t)-H_x(t).
\end{aligned}
\end{equation}
Moreover, $ \lim_{t\downarrow0} tA_x(t)W_x(t)=0$. Next, by~\eqref{eq:Walpha_equal}, it follows that $\Phi(x,t) \le -2\log(t)- \log(\alpha(x))-2\gamma$, and hence $W_x(t) \le t^{-2}e^{-2\gamma}\alpha(x)^{-1} \le 2e^{-2\gamma}t^{-2}$ for all $t>0$ and for all $x \in (0,1]$, since $\alpha(x)\ge 1/2$ for all $x \in (0,1]$. Moreover, since $A_x(t)\le3$, it holds that $tA_x(t)W_x(t) \le 6 e^{-2\gamma} t^{-1}$ for all $t>0$ and $x \in (0,1]$, so that $ \lim_{t\to\infty} tA_x(t)W_x(t)=0$. Integrating the derivative identity in~\eqref{eq:derivative_W_A_J_H} over $(0,\infty)$ thereby yields $\int_0^\infty J_x(t)W_x(t)\D t = \int_0^\infty H_x(t)W_x(t)\D t$, which concludes~\eqref{eq:Xi_H_representation}.

\textbf{Step 2:} We first show that
\begin{equation}\label{eq:Xi_minus_infty_new}
    \lim_{x\downarrow0}\Xi(x)=-\infty.
\end{equation}
For $c>0$ and $t>0$, we have by~\eqref{eq:Walpha_equal}, that $ \int_0^t (e^{-cs}-1)s^{-1}\D s = -\gamma - \log(ct) - \Gamma(0,ct)$. Hence, $W_x(t) = e^{-2\gamma} \alpha(x)^{-1}t^{-2} e^{ -\Gamma(0,\alpha(x)t) - \Gamma(0,t)}$. Since, $1/2\le\alpha(x)\le1$ and $\Gamma(0,\cdot)$ is positive and decreasing, there exist constants $c_0,C_0>0$, independent of $x\in(0,1]$, such that
\begin{equation}\label{eq:W_two_sided}
    \frac{c_0}{t^2} \le W_x(t) \le \frac{C_0}{t^2}, \qquad \text{ for all } t\ge1.
\end{equation}

Fix $T\coloneqq 6e$. For $0<x\le T^{-1}$ and $T\le t\le x^{-1}$, we have $ e^{-xt}\ge e^{-1}$. Since $A_x(t)\le3$, it follows from~\eqref{eq:defn_H_worst} that $H_x(t) \le 3-te^{-xt} \le 3-t/e \le -t/(2e)$. Consequently, by~\eqref{eq:W_two_sided}, we have
\begin{equation}\label{eq:middle_negative_log}
    \int_T^{1/x} H_x(t)W_x(t)\D t \le -\frac{c_0}{2e} \int_T^{1/x}\frac{\D t}{t} = -\frac{c_0}{2e} \log\mleft(\frac{1}{Tx}\mright).
\end{equation}
On the other hand, $\sup_{0<x\le1} \int_0^T H_x(t)W_x(t)\D t \le 3T < \infty$, since the $W_x(t) \le 1 $ and $H_x(t) \le 3$ for all $x \in (0,1]$ and $t \in [0,T]$. Furthermore, $H_x(t)\le3$ for $t\ge x^{-1}$, and hence, by~\eqref{eq:W_two_sided}, we have $\int_{1/x}^\infty H_x(t)W_x(t)\D t \le 3C_0 \int_{1/x}^\infty t^{-2} \D t = 3C_0x$. Combining these estimates with~\eqref{eq:Xi_H_representation} proves~\eqref{eq:Xi_minus_infty_new}.

We next show that
\begin{equation}\label{eq:Xi_one_positive_new}
    \Xi(1)>0.
\end{equation}
When $x=1$, we have $\alpha(1)=1$, $J_1(t) = 2e^{-t}(1-t)$, and $t\tfrac{\D}{\D t}W_1(t) = -2(1-e^{-t})W_1(t)$. Hence, it follows that
\[
\begin{aligned}
\frac{\D}{\D t} \mleft( te^{-t}W_1(t) \mright)
&= 
e^{-t}(1-t)W_1(t) - 2e^{-t}(1-e^{-t})W_1(t).
\end{aligned}
\]
We have $\int_0^\infty \frac{\D}{\D t} \mleft( te^{-t}W_1(t) \mright) \D t=0$, since the boundary terms vanish at $0$ and $\infty$, i.e.\ $\lim_{t \da 0} te^{-t}W_1(t)=0$ and $\lim_{t \to \infty} te^{-t}W_1(t)=0$, and therefore $ \int_0^\infty e^{-t}(1-t)W_1(t)\D t = 2 \int_0^\infty e^{-t}(1-e^{-t})W_1(t)\D t$. Hence,
\begin{equation}\label{eq:Xi_one_positive_explicit}
    \Xi(1) 
    =
    \frac12 \int_0^\infty e^{-t}(1-t)W_1(t)\D t 
    =
    \int_0^\infty e^{-t}(1-e^{-t})W_1(t)\D t > 0.
\end{equation} By~\eqref{eq:Xi_minus_infty_new},~\eqref{eq:Xi_one_positive_new}, and continuity of $\Xi$ on $(0,1]$, the function $\Xi$ has at least one zero in $(0,1)$.

\textbf{Step 3:} We now prove the crucial fact that
\begin{equation}\label{eq:zero_upward}
    \Xi(x)=0 \quad\Longrightarrow\quad \Xi'(x)>0.
\end{equation} For this purpose, let $\Pi$ be a Poisson point process on $(0,1]$ with intensity measure $u^{-1} \D u$, and define $D\coloneqq\sum_{u\in\Pi}u$, where $\E[D]=1<\infty$, implying that the random variable $D$ is finite almost surely. The Laplace transform of $D$ is
\begin{equation}\label{eq:Dickman_LT}
    \varphi(t)
    \coloneqq
    \E[e^{-tD}] = \exp\mleft( \int_0^1 \frac{e^{-tu}-1}{u}\D u \mright)= \exp\mleft( \int_0^t \frac{e^{-v}-1}{v}\D v \mright).
\end{equation}
Let $D_1,D_2$ be independent copies of $D$, and put $S_x \coloneqq D_1+\alpha(x)D_2$. It follows from~\eqref{eq:Dickman_LT} and the independence between $D_1$ and $D_2$, that
\begin{equation}\label{eq:Wx_LT}
    \E[e^{-tS_x}] = \E[e^{-tD_1}]\E[e^{-t\alpha(x)D_2}] = W_x(t).
\end{equation}

Expanding~\eqref{eq:defn_H_worst}, we obtain $H_x(t)= 2+e^{-xt}-3e^{-t}-e^{-(x+1)t}+e^{-2t} - te^{-xt}-te^{-t}$. Using~\eqref{eq:Wx_LT} and Fubini's theorem, therefore gives
\begin{equation}\label{eq:Xi_h_expectation}
    4\Xi(x) = \E[h_x(S_x)],
\end{equation}
where, for $s>0$,
\begin{equation}\label{eq:defn_hx_long}
\begin{aligned}
    h_x(s) &\coloneqq \frac{2}{s} + \frac{1}{s+x} - \frac{3}{s+1} - \frac{1}{s+x+1} + \frac{1}{s+2} - \frac{1}{(s+x)^2} - \frac{1}{(s+1)^2}\\
&= \frac{3s+4}{s(s+1)^2(s+2)} - \frac{1}{(s+x)^2(s+x+1)}.
\end{aligned}
\end{equation}

We shall also use the following elementary identity. For every suitable measurable function $f$,
\begin{equation}\label{eq:macke_equality}
    \E[D f(D)] = \int_0^1 \E[f(D+u)]\D u.
\end{equation}
Indeed, by the Mecke identity~\cite[Sec.~3.1.1, Prop.~1]{PeccatiReitzner} for the Poisson point process $\Pi$, it follows that
\[
\E[D f(D)]= \E\mleft[ \sum_{u\in\Pi} u f(D) \mright] = \int_0^1 u\E[f(D+u)] \frac{\D u}{u}=\int_0^1 \E[f(D+u)]\D u.
\]

Recalling that $4\Xi(x)=\E[h_x(S_x)]$ for $S_x=D_1+\alpha(x)D_2$, and that $\alpha'(x)=1/2$, the chain rule yields $\tfrac{\D}{\D x}h_x(S_x) = \partial_x h_x(S_x) + (1/2) D_2\partial_s h_x(S_x)$. It remains to justify passing the derivative through the expectation. Fix $x_0\in(0,1]$, and choose $\delta\in(0,x_0)$. We work with $x\in K\coloneqq[\delta,1]$. This localisation is sufficient, since differentiability at $x_0$ only requires domination in a neighbourhood of $x_0$. Moreover, it is useful here because the derivatives of $h_x$ contain negative powers of $s+x$, which are not uniformly bounded as $x\downarrow0$ and $s\downarrow0$. From the explicit rational expression for $h_x$, there exists a constant $C_\delta<\infty$ such that, uniformly in $x\in K$ and $s>0$, we have $|\partial_x h_x(s)| \le C_\delta$ and $|\partial_s h_x(s)| \le C_\delta (1+s^{-2} )$. Since $\alpha$ is increasing, we have $ S_x  = D_1+\alpha(x)D_2 \ge  D_1+\alpha(\delta)D_2 \eqqcolon S_\delta $ for every $x\in K$. Furthermore, $S_x\ge \alpha(x)D_2$, and therefore, $ D_2/S_x^2 \le (\alpha(x)S_x)^{-1} \le (\alpha(\delta)S_\delta)^{-1}$. It follows that, uniformly in $x\in K$,
\[
\begin{aligned}
    \mleft| \frac{\D}{\D x}h_x(S_x) \mright|
    &\le 
    |\partial_x h_x(S_x)| + \frac{1}{2}D_2|\partial_s h_x(S_x)|\\
    &\le 
    C_\delta + \frac{C_\delta}{2} \mleft( D_2+\frac{D_2}{S_x^2} \mright)
    \le 
    C_\delta + \frac{C_\delta}{2} \mleft( D_2+ \frac{1}{\alpha(\delta)S_\delta} \mright).
\end{aligned}
\]
The random variable on the right-hand side is integrable. Indeed, $\E[D_2]=1$, while, since $S_\delta>0$ almost surely, Tonelli's theorem and~\eqref{eq:Wx_LT} give $\E[S_\delta^{-1}] = \int_0^\infty \E[e^{-tS_\delta}] \D t = \int_0^\infty W_\delta(t)\D t$. Now $W_\delta(t)\le 1$ for $0<t\le 1$, whereas for $t\ge 1$, $W_\delta(t) \le e^{-2\gamma}\alpha(\delta)^{-1}t^{-2}$. Consequently,
\[
\E[S_\delta^{-1}] 
\le 
1+ \frac{e^{-2\gamma}}{\alpha(\delta)} \int_1^\infty t^{-2}\D t
=
1+\frac{e^{-2\gamma}}{\alpha(\delta)}<\infty.
\]
Thus, $\frac{\D}{\D x}h_x(S_x)$ is dominated, uniformly for $x\in K$, by an integrable random variable which does not depend on $x$. Dominated convergence therefore allows differentiation under the expectation. Since $x_0\in(0,1]$ was arbitrary, we obtain
\begin{equation}\label{eq:Xi_prime_expectation_1}
    4\Xi'(x) = \E\mleft[ \partial_xh_x(S_x) \mright] + \frac12 \E\mleft[ D_2\partial_sh_x(S_x) \mright], \qquad \text{for all } x\in(0,1].
\end{equation}

To apply~\eqref{eq:macke_equality}, we first condition on $D_1$. For fixed $x\in(0,1]$ and $d_1\ge 0$, define $f_{x,d_1}(d) \coloneqq \partial_s h_x (d_1+\alpha(x)d )$ for all $d \ge 0$. Since $D_2$ is independent of $D_1$ and has the same distribution as $D$, we may apply~\eqref{eq:macke_equality} to the random variable $D_2$ and the function $f_{x,d_1}$. Thus, for almost every realisation of $D_1$, we have that
\[
\begin{aligned}
\E\mleft[ D_2\partial_s h_x\bigl(D_1+\alpha(x)D_2\bigr) \,\vert\,D_1 \mright]
&=
\E\mleft[ D_2 f_{x,D_1}(D_2) \,\vert\,D_1 \mright] = \int_0^1 \E\mleft[ f_{x,D_1}(D_2+u) \,\vert\,D_1 \mright]\D u\\
&=
\int_0^1 \E\mleft[ \partial_s h_x\bigl(D_1+\alpha(x)(D_2+u)\bigr) \,\vert|\,D_1 \mright]\D u.
\end{aligned}
\]
Recalling that $S_x=D_1+\alpha(x)D_2$, and taking expectations with respect to $D_1$, we obtain
\[
\E\mleft[ D_2\partial_s h_x(S_x) \mright]
=
\int_0^1 \E\mleft[ \partial_s h_x\bigl(S_x+\alpha(x)u\bigr) \mright]\D u.
\]
Finally, for every fixed value of $S_x$, the change of variables $v=S_x+\alpha(x)u$ gives
\[
\begin{aligned}
\int_0^1 \partial_s h_x\bigl(S_x+\alpha(x)u\bigr)\D u
&=
\frac{1}{\alpha(x)} \int_{S_x}^{S_x+\alpha(x)} \partial_s h_x(v)\D v = \frac{1}{\alpha(x)} \mleft( h_x\bigl(S_x+\alpha(x)\bigr)-h_x(S_x) \mright).
\end{aligned}
\]
Consequently,
\[
\E\mleft[ D_2\partial_s h_x(S_x) \mright]
=
\frac{1}{\alpha(x)} \E\mleft[ h_x\bigl(S_x+\alpha(x)\bigr)-h_x(S_x) \mright].
\] Since $2\alpha(x)=1+x$, equation~\eqref{eq:Xi_prime_expectation_1} becomes
\begin{equation}\label{eq:Xi_prime_expectation_2}
\begin{aligned}
4\Xi'(x) &= \E\mleft[ \partial_xh_x(S_x) + \frac{ h_x(S_x+\alpha(x))-h_x(S_x)}{1+x} \mright].
\end{aligned}
\end{equation}

Suppose now that $\Xi(x)=0$. By~\eqref{eq:Xi_h_expectation}, $\E[h_x(S_x)]=0$, and therefore~\eqref{eq:Xi_prime_expectation_2} yields
\begin{equation}\label{eq:Xi_prime_at_zero}
    4\Xi'(x) = \E[p_x(S_x)], \quad \text{where}\quad p_x(s) \coloneqq \partial_xh_x(s) + \frac{1}{1+x} h_x(s+\alpha(x)).
\end{equation} We claim that
\begin{equation}\label{eq:px_positive}
    p_x(s)>0, \qquad \text{for all }  x\in(0,1], \, s>0.
\end{equation} To prove~\eqref{eq:px_positive}, define $ k(t) \coloneqq t-1+e^{-t}$ and $a(t) \coloneqq 2-3e^{-t}+e^{-2t}-te^{-t}$. A direct Laplace-transform calculation from~\eqref{eq:defn_hx_long} gives
\begin{equation}\label{eq:hx_Laplace}
    h_x(s) = \int_0^\infty e^{-st} \mleft( a(t)-e^{-xt}k(t) \mright)\D t,
\end{equation}
since $ \int_0^\infty e^{-zt}k(t)\D t = z^{-2}(z+1)^{-1}$ for all $z >0$. It follows from~\eqref{eq:hx_Laplace} that
\begin{align}
    p_x(s) &= \frac{1}{1+x} \int_0^\infty e^{-st}R_x(t)\D t, \quad \text{ where } \label{eq:px_Rx}\\
    R_x(t) &\coloneqq e^{-\alpha(x)t}a(t) + k(t) \mleft( (1+x)t e^{-xt} - e^{-(\alpha(x)+x)t} \mright).\label{eq:defn_Rx}
\end{align}
Set $B\coloneqq e^{-xt}$, $C\coloneqq e^{-t}$, and $E\coloneqq e^{-\alpha(x)t}$. Since $a(t) = 2(1-C)^2-Ck(t)$, we can rewrite~\eqref{eq:defn_Rx} as
\begin{equation}\label{eq:Rx_Qx}
    R_x(t) = 2E(1-C)^2 + k(t)Q_x(t), \quad \text{where}\quad Q_x(t) \coloneqq (1+x)tB-E(B+C).
\end{equation} Since $k(t)=t-(1-C)>0$ for every $t>0$, it is immediate from~\eqref{eq:Rx_Qx} that $R_x(t)>0$ whenever $Q_x(t)\ge0$. Assume therefore that $Q_x(t)<0$. We first note that this implies $t<1$. Indeed, $Q_x(t)<0$ gives $(1+x)t < e^{-\alpha(x)t} ( 1+e^{-(1-x)t} )$. For $t\ge1$, we have that $e^{-\alpha(x)t} ( 1+e^{-(1-x)t} ) \le e^{-(1+x)/2} ( 1+e^{-(1-x)})$. The function $x\mapsto e^{-(1+x)/2} ( 1+e^{-(1-x)} )$ is non-increasing on $[0,1]$, and its value at $x=0$ equals $e^{-1/2}(1+e^{-1})<1$. Thus, $e^{-\alpha(x)t} ( 1+e^{-(1-x)t} )<1$ for $t\ge1$, whereas $(1+x)t\ge1$, which is a contradiction. Hence $t<1$.

For $0<t<1$, we have
\begin{equation}\label{eq:k_less_square}
    k(t) < (1-e^{-t})^2.
\end{equation}
Indeed, put $g(t) \coloneqq (1-e^{-t})^2-(t-1+e^{-t})$. Then, $ g'(t) = -(1-e^{-t})(1-2e^{-t})$, implying that $g$ is increasing on $(0,\log2)$ and decreasing on $(\log2,1)$. Moreover, $g(0)=0$ and $g(1) = 1-3/e+ e^{-2} >0$. Hence~\eqref{eq:k_less_square} follows.

Since $Q_x(t)<0$, we have $Q_x(t) > -E(B+C) \ge -2E$. Together with~\eqref{eq:Rx_Qx} and~\eqref{eq:k_less_square}, this gives
\[
R_x(t) > 2E(1-C)^2-2Ek(t) = 2E \mleft( (1-e^{-t})^2-k(t) \mright) > 0.
\]
Thus $R_x(t)>0$ for every $x\in(0,1]$ and every $t>0$. By~\eqref{eq:px_Rx}, this proves~\eqref{eq:px_positive}. Returning to~\eqref{eq:Xi_prime_at_zero}, we conclude that $\Xi(x)=0$ implies that $\Xi'(x)>0$, which concludes the proof of~\eqref{eq:zero_upward}.

\textbf{Step 4:} We already know that $\Xi$ has at least one zero in $(0,1)$. Suppose, for contradiction, that it has two distinct zeros $0<x_1<x_2<1$. By~\eqref{eq:zero_upward}, we have $\Xi'(x_1)>0$ and $ \Xi'(x_2)>0$. Hence, $\Xi$ is strictly positive immediately to the right of $x_1$ and strictly negative immediately to the left of $x_2$. We may therefore choose $x_1<a<b<x_2$ such that $\Xi(a)>0$ and $\Xi(b)<0$. Let now $ c \coloneqq \inf\{ y\in[a,b]: \Xi(y)=0 \}$. Then, $\Xi(c)=0$ and $\Xi(y)>0$ for every $y<c$ sufficiently close to $c$. On the other hand,~\eqref{eq:zero_upward} gives $\Xi'(c)>0$, which implies $\Xi(y)<0$ for every $y<c$ sufficiently close to $c$. This is a contradiction. Hence, $\Xi$ has exactly one zero, which we denote by $x^\ast$.

Since $ \lim_{x\downarrow0}\Xi(x)=-\infty$ by~\eqref{eq:Xi_minus_infty_new} and $\Xi$ has no zero before $x^\ast$, we have $\Xi(x)<0$ for all $0<x<x^\ast$. Similarly, since $\Xi(1)>0$ and there are no further zeros, we know $\Xi(x)>0$ for all $x^\ast<x\le1$. Consequently, $ x\mapsto\xi(1,x)$ is strictly decreasing on $(0,x^\ast)$ and strictly increasing on $(x^\ast,1]$. Since $\xi(1,\cdot)$ is continuous on $[0,1]$, it follows that $x^\ast\in(0,1)$ is its unique global minimiser.
\end{proof}

\section{Second-order asymptotics}\label{sec:variance_asymptotics} 
In this section, we study the second-order fluctuations of the number of deposited trapeziums. More precisely, we prove an analogue of the variance estimate of Dvoretzky and Robbins~\cite[Eq.~(1.4)]{Dvoretzky_Robbins}, using some of the same ideas in this more complicated setting. Our aim is to show that the variances of both $P_x$ and $T_x$ grow linearly in $x$, and that the error in the corresponding linear approximation decays at a super-exponential rate. Throughout this section, we use the notation
\[
    \varsigma\coloneqq \frac{a+b}{2}, \qquad \text{ and }\qquad  \delta\coloneqq \frac{a-b}{2}.
\]
Thus, $a=\varsigma +\delta$ and $0\le \delta\le \varsigma $. One useful feature of this notation is that, after the first trapezium has been deposited, the sizes of the two remaining strips always add up to
$x-\varsigma $.

We define the following random variables for notational convenience; $X_{\calP}(x)\coloneqq P_x$, $X_{\calT}(x)\coloneqq T_x$, $m_{\calP}(x)\coloneqq \E[P_x]=\mu(x)$ and $m_{\calT}(x)\coloneqq \E[T_x]=\nu(x)$. Furthermore, let
\begin{equation}\label{eq:defn_variance_affine_L}
    L(x)\coloneqq \lambda(a,b)x+\lambda(a,b)\varsigma -1, \quad \text{for all } x>0.
\end{equation}
The reason for introducing precisely this affine function is that it satisfies the exact branching identity
\begin{equation}\label{eq:L_exact_additivity}
    L(x)=1+L(u)+L(x-\varsigma -u),
\end{equation}
whenever $u$ and $x-\varsigma -u$ are the sizes of the two strips obtained after the first deposition. Indeed,
\[
\begin{aligned}
1+L(u)+L(x-\varsigma -u)
&=
1+\lambda(a,b)u+\lambda(a,b)\varsigma -1 +\lambda(a,b)(x-\varsigma -u)+\lambda(a,b)\varsigma -1\\
&=
\lambda(a,b)x+\lambda(a,b)\varsigma -1= L(x).
\end{aligned}
\]

Throughout the section, we will write
\begin{equation}\label{eq:defn_typewise_mean_errors}
    e_s(x)\coloneqq m_s(x)-L(x), \qquad \text{for all } s\in\{\calP,\calT\} \text{ and }x >0.
\end{equation}
We first show that the first-order asymptotic expansion from Theorem~\ref{thm:main_theorem} also holds separately for $\mu$ and $\nu$, and not just for the average $\eta$.

\begin{lemma}\label{lem:typewise_mean_error}
There exists a finite constant $C>0$ such that, for all $x>(3a-b)/2$,
\begin{equation}\label{eq:defn_R1_variance_section}
\max_{s\in\{\calP,\calT\}}|e_s(x)| \le C\mathcal R_1(x), \quad \text{where}\quad \mathcal R_1(x) \coloneqq
\begin{dcases}
    \mleft(\dfrac{2e}{x/a}\mright)^{x/a-3/2},
    &\text{if }a=b,\\
    \mleft(\dfrac{2e}{x/a}\mright)^{x/a-5/2},
    &\text{if }a>b.
\end{dcases}
\end{equation}
\end{lemma}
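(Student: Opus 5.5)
The plan is to write $\mu=\eta+d/2$ and $\nu=\eta-d/2$, where $d(x)\coloneqq\mu(x)-\nu(x)$. This gives
\[
e_{\calP}(x)=\big(\eta(x)-L(x)\big)+\tfrac12 d(x),\qquad e_{\calT}(x)=\big(\eta(x)-L(x)\big)-\tfrac12 d(x).
\]
Since $L(x)=\lambda(a,b)x+\lambda(a,b)\varsigma-1$, the bound $|\eta(x)-L(x)|\le C\mathcal R_1(x)$ for $x>2a(e+1)$ is exactly Theorem~\ref{thm:main_theorem}. It therefore suffices to show that $|d(x)|\le C\mathcal R_1(x)$.

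On the compact range $x\in((3a-b)/2,\,2a(e+1)]$ nothing is needed. There $\mu,\nu\le 2x/(a+b)$ by Remark~\ref{rem:det_bound_eta}, and $L$ is bounded. Meanwhile $\mathcal R_1$ is continuous and strictly positive, so it is bounded below. Hence all the quantities involved are bounded by a constant multiple of $\mathcal R_1(x)$, and this constant can be absorbed into $C$.

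The key step is a closed recursion for $d$. Subtract \eqref{eq:resurc_nu} from \eqref{eq:resurc_mu}; both hold for $x>(3a-b)/2$. The integrals of $\mu$ over $(0,x-a)$ and of $\nu$ over $(\delta,x-a)$ cancel partially, and one is left with
\[
d(x)=-\frac{1}{x-a}\int_{x-a}^{x-\varsigma} d(t)\D t .
\]
This is an integral over an interval of length $\delta=(a-b)/2$. If $a=b$, the interval is degenerate and $d\equiv0$, so nothing more is needed. If $a>b$, we obtain
\[
\sup_{[x,x+a]}|d|\le \frac{\delta}{x-a}\sup_{[x-a,\,x+a-\varsigma]}|d| .
\]
Set $\mathfrak D_d(z)\coloneqq\sup_{[z,z+a]}|d|$. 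The window on the right lies inside $[x-a,x]\cup[x-\varsigma,x-\varsigma+a]$, so, possibly after enlarging the window to length $2a$, we obtain a one-term analogue of \eqref{eq:gen_bound_th_D_xi}. Its coefficient is $\delta\le a/2$, in place of the coefficient $a$ in front of a sum of two terms.

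We then iterate this inequality as in Lemma~\ref{lem:technical_D_lemma}, which is now simpler since there is no binomial branching. This gives
\[
\mathfrak D_d(\varpi)\le \frac{\delta^{k}}{\prod_{j=1}^{k}(\varpi-ja)}\,\sup_{[0,3a]}|d| ,
\]
with $k$ of order $\varpi/a$. The base supremum is finite by Remark~\ref{rem:det_bound_eta}. Applying Stirling's inequality exactly as in the proof of Lemma~\ref{lem:ROC_first_order} gives a bound of order $(e/(2x/a))^{x/a-1}$. Because $\delta/a\le1/2<2$, this is much smaller than $\mathcal R_1(x)=(2e/(x/a))^{x/a-5/2}$, so $|d|\le C\mathcal R_1$.

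The main obstacle I expect is bookkeeping rather than substance. First, the window lengths must fit: the recursion samples $d$ on $[x-a,x-\varsigma]$, so I must keep track of where the shifted windows land, and the iteration has to stop at a level where the crude base bound applies. Second, the validity range of \eqref{eq:resurc_mu}--\eqref{eq:resurc_nu} near $x=(3a-b)/2$ must be checked. Both issues are absorbed by the constant $C$ and the compact-range argument above.
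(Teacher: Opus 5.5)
Your proof follows the same route as the paper's. You split $e_{\calP},e_{\calT}=(\eta-L)\pm d/2$, use the contraction identity $d(x)=-\frac{1}{x-a}\int_{x-a}^{x-\varsigma}d(t)\D t$, iterate with weight $\delta$, apply Stirling, and compare using $\delta/a\le 1/2$. The paper iterates pointwise with nested integrals rather than sup-windows, which makes no real difference. Your explicit handling of the compact range $((3a-b)/2,\,2a(e+1)]$ covers a detail the paper leaves implicit.

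One bookkeeping claim is wrong as stated. After $k\approx\varpi/a$ steps, the sampled arguments are $\varpi-ra-(k-r)\varsigma$ for $0\le r\le k$; equivalently, each window grows by $\delta$ per step. Their upper end is therefore near $\varpi-k\varsigma\approx(\delta/a)\varpi$. So when $a>b$ the base supremum is not taken over $[0,3a]$, and finiteness on $[0,3a]$ does not justify your displayed bound.

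Either of two repairs works:
\begin{itemize}
\item Use the linear growth $|d(t)|\le C(1+t)$, as the paper does. This costs a factor $1+\varpi$, which is harmless because your bound sits a factor of roughly $4^{-\varpi/a}$ below $\mathcal R_1$.
\item Note that $\delta/(y-a)<1$ for $y>a+\delta$. The recursion then gives $\sup_{[0,\infty)}|d|=\sup_{[0,a+\delta]}|d|$ by induction over steps of length $\varsigma$, which makes your displayed bound literally true.
\end{itemize}
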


\begin{proof}
Recall that $\eta(x)=(\mu(x)+\nu(x))/2$, and hence, by Theorem~\ref{thm:main_theorem}, we have for all $x>2a(e+1)$ that
\begin{equation}\label{eq:eta_L_R1}
    |\eta(x)-L(x)|\le  K \mathcal R_1(x),
\end{equation} for some explicit constant (depending on whether $a=b$ or $a>b$) given in Theorem~\ref{thm:main_theorem}. It therefore remains to control the difference between $\mu$ and $\nu$. Set $h(x)\coloneqq \mu(x)-\nu(x)$. If $a=b$, then $\delta=0$, and the strips $\calP_x$ and $\calT_x$ coincide. Consequently, $P_x\eqd T_x$ for every $x>0$, and hence $h(x)=0$ for all $x>0$. The result then follows from~\eqref{eq:eta_L_R1} immediately.

Assume therefore that $a>b$, so that $\delta>0$. From the expectation recursions~\eqref{eq:resurc_mu} and~\eqref{eq:resurc_nu} obtained in the proof of Lemma~\ref{lem:eta_func}, we have for all $x> (3a-b)/2$, that
\begin{equation}\label{eq:h_contraction}
h(x) = \frac{1}{x-a}
\mleft( \int_0^{x-a}\mu(t)\D t - \int_0^{x-\varsigma }\mu(t)\D t + \int_\delta^{x-\varsigma }\nu(t)\D t - \int_\delta^{x-a}\nu(t)\D t \mright) = -\frac{1}{x-a} \int_{x-a}^{x-\varsigma }h(t)\D t.
\end{equation}
Since $x-\varsigma =x-a+\delta$, we may equivalently write
\begin{equation}\label{eq:h_contraction_shifted}
h(x) = -\frac{1}{x-a} \int_0^\delta h(x-a+u)\D u, \quad \text{ for all }x>x_0\coloneqq \frac{3a-b}{2}.
\end{equation} By Remark~\ref{rem:det_bound_eta}, the functions $\mu$ and $\nu$ have at most linear growth, and hence there exists a finite constant $C_0$ such that $|h(x)|\le C_0(1+x)$, for all $x \ge 0$. For $x>x_0+a$, define $k_x \coloneqq \lfloor(x-x_0)/a\rfloor$, then $x-k_xa\in[x_0,x_0+a)$. Iterating~\eqref{eq:h_contraction_shifted} $k_x$ times, and using the triangle inequality, gives
\begin{equation}
    \begin{aligned}\label{eq_h_bound_iterate}
|h(x)| &\le \int_0^\delta \cdots \int_0^\delta  \frac{ \mleft| h\mleft( x-k_xa+u_1+\cdots+u_{k_x} \mright) \mright| } { \prod_{j=1}^{k_x} \mleft( x-ja+u_1+\cdots+u_{j-1} \mright) } \D u_1\cdots\D u_{k_x}\\
&\le
\frac{\delta^{k_x}} {\prod_{j=1}^{k_x}(x-ja)} \sup_{0\le u_1,\ldots,u_{k_x}\le\delta} \mleft| h\mleft( x-k_xa+u_1+\cdots+u_{k_x} \mright) \mright|,
\end{aligned}
\end{equation} where we also used that $x-ja+u_1+\cdots+u_{j-1} \ge x-ja>0$ for all $j=1,\ldots,k_x$ and $u_1,\ldots u_j$. The argument of $h$ in the upper bound in~\eqref{eq_h_bound_iterate}, satisfies
\[
   x-k_xa+u_1+\cdots+u_{k_x} \le x-k_xa+k_x\delta \le x_0+a+\frac{\delta}{a}x,
\]
and therefore, by the linear growth of $h$, we have altogether, that
\[
|h(x)| \le C_1(1+x) \frac{\delta^{k_x}}{\prod_{j=1}^{k_x}(x-ja)}.
\]
Put $z\coloneqq x/a$. Using the Gamma-function identity, $\prod_{j=1}^{k_x}(x-ja) = a^{k_x} \Gamma(z)/\Gamma(z-k_x)$, we obtain
\[
|h(x)| \le C_1(1+x) \mleft(\frac{\delta}{a}\mright)^{k_x} \frac{\Gamma(z-k_x)}{\Gamma(z)}.
\]
Since $z-k_x$ remains in the fixed compact set $[x_0/a, x_0/a +1]$ for all $x>x_0$, the quantity $\Gamma(z-k_x)$ is uniformly bounded. Moreover, $k_x=z+\Oh(1)$. Stirling's formula therefore gives
\begin{equation}\label{eq:h_gamma_bound}
|h(x)| \le C_2(1+x)z^{1/2} \mleft(\frac{e\delta}{az}\mright)^z.
\end{equation}
Since $\delta/a \le 1/2$, the bound in~\eqref{eq:h_gamma_bound} decays strictly faster than $(2e/z)^{z-5/2}$, and hence, there exists some constant $C$, such that $ |h(x)|\le C \mathcal R_1(x)$ for all $x> (3a-b)/2$.

Finally, applying~\eqref{eq:eta_L_R1} together with the decompositions below concludes the proof: 
\[
e_{\calP}(x) = \mu(x)-L(x) = \eta(x)-L(x)+\frac{h(x)}{2}, \text{ and } e_{\calT}(x) = \nu(x)-L(x) = \eta(x)-L(x)-\frac{h(x)}{2}. \qedhere
\]
\end{proof}

We now turn to the second moments. For $s\in\{\calP,\calT\}$, define
\begin{equation}\label{eq:defn_B_s}
B_s(x) \coloneqq \E\mleft[\mleft(X_s(x)-L(x)\mright)^2\mright].
\end{equation}
Notice that $B_s$ is not exactly the variance of $X_s(x)$, since the centring $L(x)$ need not equal the exact mean. By~\eqref{eq:defn_typewise_mean_errors}, we have instead, that 
\begin{equation}\label{eq:Var_from_B}
\Var(X_s(x)) = B_s(x)-e_s(x)^2, \quad \text{ for }s \in\{\calP,\calT\}.
\end{equation}

We use throughout the following standard consequence of the recursive construction of $\mathcal{UDA}$. Conditional on the first deposited trapezium, the subsequent parking procedures on the two disjoint child strips are independent. Equivalently, one may construct the process by using independent arrival sequences on the two child strips after the first split. Thus, conditional on the first deposition position, the numbers of trapeziums eventually deposited on the two child strips are independent.

\begin{lemma}\label{lem:second_moment_recurrences}
Let $B_\calP$ and $B_\calT$ be defined as in~\eqref{eq:defn_B_s}. For all $x>(3a-b)/2$, it follows that
\begin{align}
B_{\calP}(x)
&=
\frac{1}{x-a} \mleft( \int_0^{x-a}B_{\calP}(t)\D t + \int_\delta^{x-\varsigma }B_{\calT}(t)\D t \mright) + q_{\calP}(x), \quad \text{ and } \label{eq:BP_rec}\\
B_{\calT}(x)
&=
\frac{1}{x-a} \mleft( \int_\delta^{x-a}B_{\calT}(t)\D t + \int_0^{x-\varsigma }B_{\calP}(t)\D t \mright) + q_{\calT}(x), \label{eq:BT_rec}
\end{align}
where
\begin{align}
q_{\calP}(x) 
&=
\frac{1}{x-a} \mleft( \int_0^{x-a} e_{\calP}(t)e_{\calT}(x-\varsigma -t)\D t + \int_\delta^{x-\varsigma } e_{\calT}(t)e_{\calP}(x-\varsigma -t)\D t \mright),\quad \text{ and } \label{eq:qP_def}\\
q_{\calT}(x)
&=
\frac{1}{x-a} \mleft( \int_0^{x-a-\delta} e_{\calT}(t+\delta)e_{\calT}(x-a-t)\D t + \int_0^{x-\varsigma }  e_{\calP}(t)e_{\calP}(x-\varsigma -t)\D t \mright). \label{eq:qT_def}
\end{align}
\end{lemma}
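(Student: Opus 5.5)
The plan is to condition on the first deposition, using the almost-sure decompositions \eqref{eq:probibalistic_vers_P_x_T_x} together with the conditional independence from Remark (iii). Write $P_x = 1 + X_1 + X_2$, where, given $\tau$, the variables $X_1,X_2$ are independent. They count the trapeziums deposited on the two child strips, and their types and sizes $(s_1,u)$, $(s_2,x-\varsigma-u)$ are read off from Figure~\ref{fig:trapezium_arriving}. In every case the two child sizes add up to $x-\varsigma$, so the exact additivity \eqref{eq:L_exact_additivity} gives
\[
X_{\calP}(x) - L(x) = \bigl(X_1 - L(u)\bigr) + \bigl(X_2 - L(x-\varsigma-u)\bigr).
\]
Squaring and taking the conditional expectation given $\tau$, the diagonal terms become $B_{s_1}(u) + B_{s_2}(x-\varsigma-u)$. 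By conditional independence, the cross term becomes $2e_{s_1}(u)e_{s_2}(x-\varsigma-u)$, since $\E[X_i \mid \tau] - L(\cdot) = e_{s_i}(\cdot)$ by definition \eqref{eq:defn_typewise_mean_errors}. The same argument applies verbatim to $T_x$.

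Next I average over $\tau\sim\mathcal U(0,2x-2a)$, exactly as in the derivation of \eqref{eq:resurc_mu}--\eqref{eq:resurc_nu}.

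For $\calP_x$ there are two cases.
\begin{itemize}
\item Upper deposition, $u=t\in(0,x-a)$: the children are $\calP_t$ and $\calT_{x-\varsigma-t}$.
\item Lower deposition: the children are $\calT_t$, with $t$ ranging over $(\delta,x-\varsigma)$, and $\calP_{x-\varsigma-t}$.
\end{itemize}
The change of variables $t\mapsto x-\varsigma-t$ maps $(0,x-a)$ onto $(\delta,x-\varsigma)$. Hence the diagonal $B$-integrals pair up and give
\[
\frac{2}{2x-2a}\Bigl(\int_0^{x-a}B_\calP + \int_\delta^{x-\varsigma}B_\calT\Bigr).
\]
The cross terms give $\frac{2}{2x-2a}$ times the two integrals in \eqref{eq:qP_def}. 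The factor $2$ cancels, which yields \eqref{eq:BP_rec}.

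For $\calT_x$, with $x\ge(3a-b)/2$, there are again two cases.
\begin{itemize}
\item Upper deposition: the children are $\calT_{t+\delta}$ and $\calT_{x-a-t}$ for $t\in(0,x-a-\delta)$. Since $x-a-\delta = x-(3a-b)/2$, these are precisely the sizes appearing in \eqref{eq:qT_def}. The sizes sum to $x-\varsigma$ because $\delta-a=-\varsigma$. Both diagonal terms reparametrize to $\int_\delta^{x-a}B_\calT$.
\item Lower deposition: the children are $\calP_t$ and $\calP_{x-\varsigma-t}$ for $t\in(0,x-\varsigma)$. Both diagonal terms reparametrize to $\int_0^{x-\varsigma}B_\calP$.
\end{itemize}
Dividing by $2x-2a$ and collecting terms gives \eqref{eq:BT_rec} and \eqref{eq:qT_def}.

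The points needing care are the following.
\begin{itemize}
\item Integrability: $P_x,T_x\le 2x/(a+b)$ by Remark~\ref{rem:det_bound_eta}, so all conditional expectations are bounded and Fubini applies.
\item Degenerate children: a child of size $u$ with $X(u)=0$ (for instance $u<\delta$ or $u<a$) must still satisfy the convention. Because $L$ is defined for all $x>0$ and $e_s(u)=-L(u)$ there, the identity holds with no exceptional cases, provided the convention $P_0=T_0=0$ and the values at the boundary points are used consistently.
\item Endpoint conventions: the boundary cases $P_a=1$, $T_\varsigma=1$ are measure-zero in $t$ and so do not affect the integrals.
\item The flip convention of Remark (ii) is used to identify the reflected children with $\calP$ and $\calT$ types.
\end{itemize}

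The main obstacle is bookkeeping. One has to check in each of the four cases of Figure~\ref{fig:trapezium_arriving} that the child sizes sum to $x-\varsigma$, so that \eqref{eq:L_exact_additivity} applies. One must also check that the symmetrizing substitution maps each diagonal integral onto the stated range, in particular the lower limit $\delta$ coming from the $\calT$ children.
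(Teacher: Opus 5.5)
Your proposal is correct and follows essentially the same route as the paper. You condition on the first deposition and use the exact additivity of $L$ to split $X_s(x)-L(x)$ into two conditionally independent pieces, so that the cross term becomes $2e_{s_1}e_{s_2}$. You then average over $\tau$ and symmetrise the diagonal integrals via $t\mapsto x-\varsigma-t$. You also carry out the $\calT_x$ bookkeeping (children of sizes $t+\delta$ and $x-a-t$, with ranges $(\delta,x-a)$ and $(0,x-\varsigma)$) that the paper only calls analogous, and it checks out.
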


\begin{proof}
We first derive the recursion for $B_{\calP}$. Let $x>(3a-b)/2$, in which case the recursive representation~\eqref{eq:probibalistic_vers_P_x_T_x} applies, and condition on $\tau\sim\mathcal U(0,2x-2a)$.

Suppose first, that $\tau\in(0,x-a)$. Then the first deposited trapezium leaves two strips, one of type $\calP$ and size $\tau$, and one of type $\calT$ and size $x-\varsigma -\tau$. Hence $P_x = 1+P_\tau+T_{x-\varsigma -\tau}$, where, conditionally on $\tau$, the two random variables $P_\tau$ and $T_{x-\varsigma -\tau}$ on the right-hand side are independent. By~\eqref{eq:L_exact_additivity}, we have that $L(x) = 1+L(\tau)+L(x-\varsigma -\tau)$, and therefore
\[
P_x-L(x) = \big(P_\tau-L(\tau)\big) + \big(T_{x-\varsigma -\tau}-L(x-\varsigma -\tau)\big).
\]
Consequently, conditional on $\tau=t$,
\[
\E\mleft[ \mleft(P_x-L(x)\mright)^2 \,\big|\,\tau=t \mright] = B_{\calP}(t) + B_{\calT}(x-\varsigma -t) + 2e_{\calP}(t)e_{\calT}(x-\varsigma -t).
\]

On the second branch, $\tau\in[x-a,2x-2a)$, put $t=t_\tau = \tau+(3a-b)/2-x$, then $t\in[\delta,x-\varsigma ]$, and the two child strips have types $\calT,\calP$ and sizes $t$, and $x-\varsigma -t$, respectively. Thus,
\[
\E\mleft[ \mleft(P_x-L(x)\mright)^2 \,\big|\,t_\tau=t \mright] = B_{\calT}(t) + B_{\calP}(x-\varsigma -t) + 2e_{\calT}(t)e_{\calP}(x-\varsigma -t).
\]

Averaging over the two branches, i.e.\ taking expectations w.r.t. $\tau$, gives
\begin{equation}\label{eq:decomp_B_P_}
    \begin{aligned}
B_{\calP}(x)
&=
\frac{1}{2x-2a} \int_0^{x-a} B_{\calP}(t) + B_{\calT}(x-\varsigma -t) + 2e_{\calP}(t)e_{\calT}(x-\varsigma -t) \D t\\
&\quad
+ \frac{1}{2x-2a} \int_\delta^{x-\varsigma } B_{\calT}(t) + B_{\calP}(x-\varsigma -t) + 2e_{\calT}(t)e_{\calP}(x-\varsigma -t) \D t.
\end{aligned}
\end{equation}
Changing variables $u=x-\varsigma -t$ in the appropriate terms of~\eqref{eq:decomp_B_P_}, we obtain
\[
\int_0^{x-a}B_{\calT}(x-\varsigma -t)\D t = \int_\delta^{x-\varsigma }B_{\calT}(u)\D u, \quad \text{and}\quad  \int_\delta^{x-\varsigma }B_{\calP}(x-\varsigma -t)\D t = \int_0^{x-a}B_{\calP}(u)\D u.
\]
Combining the two cross-products from~\eqref{eq:decomp_B_P_} yields exactly $q_\calP(x)$. Hence~\eqref{eq:decomp_B_P_} concludes~\eqref{eq:BP_rec} and~\eqref{eq:qP_def}. The proof for $B_{\calT}$ is completely analogous to the present proof for $B_\calP$. 
\end{proof}

We next estimate the inhomogeneous $q_s$-terms appearing in Lemma~\ref{lem:second_moment_recurrences}.

\begin{lemma}\label{lem:q_toll_bound}
There exists a finite constant $C>0$ such that, for all sufficiently large $x$,
\begin{equation}\label{eq:defn_R2_variance_section}
\max_{s\in\{\calP,\calT\}}|q_s(x)| \le C\mathcal R_2(x), \quad \text{where}\quad \mathcal R_2(x) \coloneqq
\begin{dcases}
    \mleft(\dfrac{4e}{x/a}\mright)^{x/a-4},
    &\text{if }a=b,\\
    \mleft(\dfrac{4e}{x/a}\mright)^{x/a-(5+(a+b)/(2a))},
    &\text{if }a>b.
\end{dcases}
\end{equation}
\end{lemma}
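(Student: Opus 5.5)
The plan is to bound each $q_s$ directly from the pointwise estimate of Lemma~\ref{lem:typewise_mean_error} on the mean errors. The key tool is a convolution estimate for products of super-exponentially small terms. Each $q_s(x)$ is $(x-a)^{-1}$ times integrals of $e_{s_1}(t)e_{s_2}(x-\varsigma-t)$, where the two arguments sum to $x-\varsigma$, possibly after a $\delta$-shift as in $q_\calT$.

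\textbf{Step 1: split the convolution.} For the integrals I will split the range of $t$ into three pieces:
\begin{itemize}[leftmargin=2em]
\item a boundary piece where $t$ is below a fixed threshold $x_1>(3a-b)/2$;
\item the symmetric boundary piece where $x-\varsigma-t<x_1$;
\item a bulk piece where both arguments exceed $x_1$.
\end{itemize}
On the boundary pieces, one factor is bounded by a constant, by linear growth (Remark~\ref{rem:det_bound_eta}, together with $|L|\le C$ on compacts). The other factor is bounded by $C\mathcal R_1(x-\varsigma-t)$ with argument at least $x-\varsigma-x_1$. This gives a contribution of order $x^{-1}\mathcal R_1(x-C)$, which is of the type $(2e/(x/a))^{x/a}$ up to polynomial factors. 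It is therefore far smaller than $(4e/(x/a))^{x/a}$, since the ratio behaves like $2^{-x/a}$ times polynomial factors.

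\textbf{Step 2: the bulk piece.} In the bulk I use $|e_{s_1}(t)e_{s_2}(u)|\le C^2\mathcal R_1(t)\mathcal R_1(u)$ with $t+u=x-\varsigma$. Writing $y=t/a$, $w=u/a$, and $\mathcal R_1(t)=(2e)^{y-m}y^{-(y-m)}$ with $m\in\{3/2,5/2\}$, the product becomes
\[
(2e)^{y+w-2m}\,y^{-(y-m)}w^{-(w-m)}.
\]
The map $y\mapsto y\log y$ is convex, so at fixed sum $S=y+w$ the quantity $y^{y}w^{w}$ is minimised at $y=w=S/2$. A Laplace-type estimate of the integral over $t$ then gives the bound
\[
(2e)^{S-2m}(S/2)^{-(S-2m)}\cdot \mathrm{poly}(S)=\bigl(4e/S\bigr)^{S-2m}\cdot \mathrm{poly}(S).
\]
Finally I substitute $S=(x-\varsigma)/a=x/a-\varsigma/a$. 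Rewriting $(4e/S)^{S}$ in terms of $x/a$ shifts the exponent by $\varsigma/a=(a+b)/(2a)$. This explains the appearance of $(a+b)/(2a)$ in $\mathcal R_2$. The factor $S^{-(S-2m)}$ versus $(x/a)^{-(x/a-\cdot)}$ only costs a bounded factor, as in~\eqref{eq:bound_exp_rate}.

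\textbf{Step 3: exponent bookkeeping.} Combining the pieces, the bulk integral over an interval of length $\Oh(x)$ and the prefactor $(x-a)^{-1}$ are handled through the half-power gains from the Gaussian width of the Laplace estimate. For $a=b$ we have $2m=3$ and $\varsigma/a=1$, giving exponent $x/a-4$. For $a>b$ we have $2m=5$, giving exponent $x/a-(5+(a+b)/(2a))$. The shifted $\delta$-argument in $q_\calT$ is harmless: there the sum of arguments is $x-a$, which gives an even better rate. When $a=b$, one has $q_\calP=q_\calT$ and only one case is needed.

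\textbf{Main obstacle.} I expect the main difficulty to be the precise polynomial accounting in the bulk Laplace estimate, so that the stated exponents come out exactly rather than up to an extra power of $x$. My first attempt will be a discrete version: split $[x_1,x-\varsigma-x_1]$ into unit-$a$ blocks and use $\binom{S}{y}$-type identities, via Stirling, for $y^{-y}w^{-w}\le S^{-S}\binom{S}{y}^{-1}\cdots$. If the sharp polynomial factor is out of reach, I will absorb the discrepancy into $C$ by choosing "sufficiently large $x$". Since the lemma claims only a bound up to a constant, any loss of at most one power of $x/a$ is compensated by the slack already present in the boundary pieces.
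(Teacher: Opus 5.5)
Your plan is essentially the paper's proof. The split is the same: two endpoint pieces, where one factor is bounded and the other is at most $C\mathcal R_1(x-\varsigma-x_0)$, giving $\oh(\mathcal R_2(x))$ because the ratio is $2^{-x/a}$ times a polynomial. The bulk piece is handled the same way too, by convexity of $u\mapsto u\log u$ together with $u(\ell-u)\le \ell^2/4$ at fixed sum $\ell=(x-\varsigma)/a$.

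The obstacle you anticipate does not arise, and no Laplace estimate is needed. The convexity bound shows that the pointwise maximum of $(2e/u)^{u-\beta}(2e/(\ell-u))^{\ell-u-\beta}$ is exactly $(4e/\ell)^{\ell-2\beta}$. The crude bound (interval length $\le x$) times (supremum) cancels the prefactor $(x-a)^{-1}$. Writing $\ell=z-c$ with $z=x/a$ and $c=\varsigma/a$ then gives $(4e/\ell)^{\ell-2\beta}=\Oh\bigl((4e/z)^{z-(c+2\beta)}\bigr)=\Oh(\mathcal R_2(x))$ with no polynomial loss, since $c+2\beta$ is exactly the exponent shift in $\mathcal R_2$.

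This matters, because your fallback would not work. A lost factor of $x/a$ in the bulk cannot be absorbed into the constant $C$, and slack in the endpoint pieces does nothing for the bulk term.

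One small correction: in $q_\calT$ the arguments $t+\delta$ and $x-a-t$ sum to $x-a+\delta=x-\varsigma$, not $x-a$. That term therefore obeys the same rate as $q_\calP$, not a better one.
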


\begin{proof}
We prove the estimate for $q_{\calP}$. The proof for $q_{\calT}$ is completely analogous. Let $\beta \coloneqq (3/2)\1_{\{a=b\}}+(5/2)\1_{\{a>b\}}$. Choose $x_0>0$ sufficiently large, such that Lemma~\ref{lem:typewise_mean_error} applies for all $x \ge x_0$. Since the functions $e_s$ are locally bounded, there also exists $C_0<\infty$, such that $ |e_s(y)|\le C_0$ for all $0 \le y \le x_0$ and $s \in \{\calP,\calT\}$.

Consider the first integral in~\eqref{eq:qP_def}. Choose $x_0>\delta$ sufficiently large that the estimate of Lemma~\ref{lem:typewise_mean_error} holds for all $x\ge x_0$, and assume that $x>\varsigma +2x_0$. Since $x-a=x-\varsigma -\delta$ and $x_0>\delta$, we have $x-\varsigma -x_0<x-a$. Therefore, we may decompose
\begin{align}\label{eq:qP_first_integral_decomposition}
\begin{aligned}
\int_0^{x-a} \mleft| e_{\calP}(t)e_{\calT}(x-\varsigma -t) \mright|\D t &= \int_0^{x_0} \mleft| e_{\calP}(t)e_{\calT}(x-\varsigma -t) \mright|\D t + \int_{x_0}^{x-\varsigma -x_0} \mleft| e_{\calP}(t)e_{\calT}(x-\varsigma -t) \mright|\D t \\
&\quad + \int_{x-\varsigma -x_0}^{x-a} \mleft| e_{\calP}(t)e_{\calT}(x-\varsigma -t) \mright|\D t.
\end{aligned}
\end{align}
The first and third terms in~\eqref{eq:qP_first_integral_decomposition} are the endpoint contributions. Indeed, if $t\in[0,x_0]$, then $e_{\calP}(t)$ is uniformly bounded and $x-\varsigma -t \in [x-\varsigma -x_0,x-\varsigma ]$. Thus, the argument of $e_{\calT}$ differs from $x$ by a uniformly bounded quantity. Since $\mathcal R_1$ is eventually decreasing, it follows from Lemma~\ref{lem:typewise_mean_error} that
\[
\int_0^{x_0} \mleft| e_{\calP}(t)e_{\calT}(x-\varsigma -t) \mright|\D t \le C \int_0^{x_0} \mathcal R_1(x-\varsigma -t)\D t\le C x_0\mathcal R_1(x-\varsigma -x_0).
\]
Similarly, if $t\in[x-\varsigma -x_0,x-a]$, then $x-\varsigma -t\in[\delta,x_0]$. Hence, $e_{\calT}(x-\varsigma -t)$ is uniformly bounded, whereas $t\ge x-\varsigma -x_0$. The length of this interval is $(x-a)-(x-\varsigma -x_0)  = x_0-\delta$, and consequently
\[
\int_{x-\varsigma -x_0}^{x-a} \mleft| e_{\calP}(t)e_{\calT}(x-\varsigma -t) \mright|\D t \le C(x_0-\delta) \mathcal R_1(x-\varsigma -x_0).
\]
Combining the two endpoint estimates, we obtain
\begin{equation}\label{eq:qP_endpoint_contribution}
\frac{1}{x-a} \int_{[0,x_0] \cup [x-\varsigma -x_0,x-a]} \mleft| e_{\calP}(t)e_{\calT}(x-\varsigma -t) \mright|\D t 
=
\Oh\mleft( \frac{1}{x} \mathcal R_1(x-\varsigma -x_0) \mright)
=
\Oh(\mathcal R_2(x)).
\end{equation}
To verify the final estimate of~\eqref{eq:qP_endpoint_contribution}, write $z=x/a$ and $c=(\varsigma +x_0)/a$, then $(x-\varsigma -x_0)/a=z-c$ where $c>0$ is fixed. Note that $(z/(z-c))^{z-c-\beta} =(1+c/(z-c))^{z-c-\beta} \to e^c$ as $z \to \infty$, hence by definition of $\beta$ and $\alpha= 4\1_{\{a=b\}}+ (5+(a+b)/(2a)) \1_{\{a>b\}}$, it follows that
\[
\frac{ x^{-1}\mathcal R_1(x-\varsigma -x_0)}{\mathcal R_2(x)}
=
\frac{1}{az} \mleft(\dfrac{2e}{z-c}\mright)^{z-c-\beta} \mleft(\dfrac{4e}{z}\mright)^{\alpha-z} 
= 
\Oh\mleft( 2^{-z}z^{c+\beta-\alpha-1} \mright)
=
\oh(1),
\]
concluding the endpoint contribution in~\eqref{eq:qP_endpoint_contribution}. 

It remains to estimate the contribution from the middle interval $[x_0,x-\varsigma -x_0]$. For every $t$ in this interval, we know that $t\ge x_0$ and $x-\varsigma -t\ge x_0$, so Lemma~\ref{lem:typewise_mean_error} may be applied simultaneously to both factors. Put $z=x/a$, $c=\varsigma /a$, $\ell= z-c=(x-\varsigma )/a$, $u=t/a$, and recall the definition of $\beta$. By Lemma~\ref{lem:typewise_mean_error}, there exists a constant $C>0$, such that
\begin{equation}\label{eq:q_middle_product_bound}
\mleft| e_{\calP}(t)e_{\calT}(x-\varsigma -t) \mright| \le C \mleft(\frac{2e}{u}\mright)^{u-\beta} \mleft(\frac{2e}{\ell-u}\mright)^{\ell-u-\beta}.
\end{equation}
We rewrite the product on the right-hand side as
\begin{equation}\label{eq:q_middle_product_rewrite}
\mleft(\frac{2e}{u}\mright)^{u-\beta} \mleft(\frac{2e}{\ell-u}\mright)^{\ell-u-\beta} = (2e)^\ell u^{-u}(\ell-u)^{-(\ell-u)} (2e)^{-2\beta} u^\beta(\ell-u)^\beta.
\end{equation}
The function $u\to u\log u+(\ell-u)\log(\ell-u)$ is strictly convex on $(0,\ell)$ and attains its unique minimum at $u=\ell/2$. Therefore, $u\log u+(\ell-u)\log(\ell-u) \ge \ell\log(\ell/2)$, or, equivalently, $ u^u(\ell-u)^{\ell-u} \ge (\ell/2)^\ell$. Thus, $u^{-u}(\ell-u)^{-(\ell-u)} \le (2/\ell)^\ell$. Moreover, by the arithmetic-geometric mean inequality, $u(\ell-u)\le \ell^2/4$, we also have that $u^\beta(\ell-u)^\beta \le (\ell^2/4)^\beta$. Substituting these two estimates into~\eqref{eq:q_middle_product_rewrite}, gives
\begin{equation}\label{eq:q_convolution_pointwise}
\mleft(\frac{2e}{u}\mright)^{u-\beta} \mleft(\frac{2e}{\ell-u}\mright)^{\ell-u-\beta} 
\le 
C_\beta \mleft(\frac{4e}{\ell}\mright)^\ell \ell^{2\beta},
\end{equation}
where $C_\beta>0$ depends only on $\beta$. It follows from~\eqref{eq:q_middle_product_bound} and~\eqref{eq:q_convolution_pointwise}, that
\begin{equation}\label{eq:q_middle_integrated_bound}
\frac{1}{x-a} \int_{x_0}^{x-\varsigma -x_0} \mleft| e_{\calP}(t)e_{\calT}(x-\varsigma -t) \mright|\D t 
\le 
\frac{C(x-\varsigma -2x_0)}{x-a} \mleft(\frac{4e}{\ell}\mright)^\ell \ell^{2\beta}.
\end{equation}
Since $0\le (x-\varsigma -2x_0)/(x-a) \le x/(x-a)$, the fraction on the right-hand side of~\eqref{eq:q_middle_integrated_bound} is uniformly bounded for all sufficiently large $x$. Consequently,
\begin{equation}\label{eq:q_middle_pre_R2}
\frac{1}{x-a} \int_{x_0}^{x-\varsigma -x_0} \mleft| e_{\calP}(t)e_{\calT}(x-\varsigma -t) \mright|\D t
\le
C \mleft(\frac{4e}{\ell}\mright)^\ell \ell^{2\beta}.
\end{equation}

We now compare the right-hand side of~\eqref{eq:q_middle_pre_R2} with $\mathcal R_2(x)$. Recall that $ \ell=z-c$, and $\alpha = 4\1_{\{a=b\}}+(5+(a+b)/(2a))\1_{\{a>b\}}$. Then, $\mathcal R_2(x) = (4e/z)^{z-\alpha}$.
We have
\begin{equation}\label{eq:q_R2_ratio}
\mleft(\dfrac{4e}{z-c}\mright)^{z-c} (z-c)^{2\beta} \mleft(\dfrac{4e}{z}\mright)^{\alpha-z}= (4e)^{\alpha-c} z^{c+2\beta-\alpha} \mleft( \frac{z}{z-c} \mright)^{z-c-2\beta}.
\end{equation}
Furthermore, $(z/(z-c))^{z-c-2\beta} =(1+c/(z-c))^{z-c-2\beta} \to e^c$ as $z \to \infty$. Thus, this factor remains bounded for all sufficiently large $z$.

If $a=b$, then $c=1$, $\beta=3/2$, and $\alpha=4$, and therefore $c+2\beta-\alpha =0$. If $a>b$, then $c=\varsigma /a<1$, $\beta=5/2$, and $\alpha=(5+(a+b)/(2a))=c+2\beta$, and hence $c+2\beta-\alpha=0$. In either case, $z^{c+2\beta-\alpha}$ is bounded for all sufficiently large $z$. Hence, it follows from~\eqref{eq:q_R2_ratio} that
\begin{equation}\label{eq:q_R2_comparison}
\mleft(\frac{4e}{\ell}\mright)^{\ell} \ell^{2\beta}=\mleft(\frac{4e}{z-c}\mright)^{z-c} (z-c)^{2\beta}
=
\Oh(\mathcal R_2(x)),
\end{equation} as $x \to \infty$ (i.e. $z \to \infty$). Combining~\eqref{eq:q_middle_pre_R2} and~\eqref{eq:q_R2_comparison}, we obtain
\begin{equation}\label{eq:qP_middle_final}
\frac{1}{x-a} \int_{x_0}^{x-\varsigma -x_0} \mleft| e_{\calP}(t)e_{\calT}(x-\varsigma -t) \mright|\D t
=
\Oh(\mathcal R_2(x)).
\end{equation} Together with the endpoint estimate~\eqref{eq:qP_endpoint_contribution}, this yields
\begin{equation}\label{eq:qP_first_integral_final}
\frac{1}{x-a} \int_0^{x-a} \mleft| e_{\calP}(t)e_{\calT}(x-\varsigma -t) \mright|\D t
=
\Oh(\mathcal R_2(x)).
\end{equation}

We next observe that the two integrals defining $q_{\calP}$ are in fact identical. Indeed, using the change of variables $u=x-\varsigma -t$, and the identity $x-\varsigma -\delta=x-a$, we obtain
\begin{equation*}
\int_\delta^{x-\varsigma } e_{\calT}(t)e_{\calP}(x-\varsigma -t)\D t = \int_0^{x-a} e_{\calP}(u)e_{\calT}(x-\varsigma -u)\D u.
\end{equation*} Hence,~\eqref{eq:qP_def} may be written as $q_{\calP}(x) = 2 (x-a)^{-1} \int_0^{x-a} e_{\calP}(t)e_{\calT}(x-\varsigma -t)\D t$. It follows immediately from~\eqref{eq:qP_first_integral_final} that $ q_{\calP}(x) = \Oh(\mathcal R_2(x))$, concluding the proof.
\end{proof}

We shall use the following scalar version of the Dvoretzky--Robbins argument~\cite{Dvoretzky_Robbins}. The additional term $C_0/x$ in the recursion is harmless. This is the same phenomenon observed in~\cite[Thm~1]{Dvoretzky_Robbins}, that such a term disappears from the fundamental comparison identity.

\begin{lemma}
\label{lem:scalar_DR_variance}
Let $F:[0,\infty)\to[0,\infty)$ be locally bounded and continuous for all sufficiently large arguments. Suppose that, for some $C_0\in\R$ and all sufficiently large $x$,
\begin{equation}\label{eq:scalar_DR_equation}
F(x+a) = \frac{1}{x} \mleft( \int_0^xF(t)\D t + \int_\delta^{x+\delta}F(t)\D t \mright) + \frac{C_0}{x} + p(x+a), 
\end{equation}
where $ |p(x)|\le C\mathcal R_2(x)$ for all sufficiently large $x$. Then, there exists a constant $\rho\ge0$ such that
\[
F(x) = \rho\cdot (x+\varsigma ) + \Oh(\mathcal R_2(x)).
\]
\end{lemma}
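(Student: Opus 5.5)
The plan is to repeat the argument of Lemma~\ref{lem:ROC_first_order}, now with a forcing term. First I remove the constant $C_0/x$. Setting $G(x)\coloneqq F(x)+\kappa$ and inserting it into~\eqref{eq:scalar_DR_equation} gives
\[
G(x+a)=\frac1x\mleft(\int_0^x G\,\D t+\int_\delta^{x+\delta}G\,\D t\mright)-\kappa+\frac{C_0}{x}+p(x+a),
\]
because $\frac1x\bigl(\int_0^x\kappa+\int_\delta^{x+\delta}\kappa\bigr)=2\kappa$. The $\kappa$ term does not cancel the $C_0/x$ term directly, so I use the difference form~\eqref{eq:f(y+a)-(x/y)f(x+a)} instead. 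Multiplying the equation by $x$ and subtracting at two points $x\le y$ gives
\[
yF(y+a)-xF(x+a)=\int_x^yF+\int_{x+\delta}^{y+\delta}F+\bigl(yp(y+a)-xp(x+a)\bigr).
\]
The $C_0$ drops out of this identity, which is the Dvoretzky--Robbins phenomenon mentioned before the lemma. Since $x+\varsigma$ solves the homogeneous version, we may subtract $A\,(x+\varsigma)$ for any $A$. This yields the analogue of~\eqref{eq:star_9} with the extra term $\frac1y\bigl(yp(y+a)-xp(x+a)\bigr)$, which is $\Oh\bigl(\mathcal R_2(x)\bigr)$ whenever $y-x\le a$.

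Next I define $I_x,S_x$ as in~\eqref{eq_defn_I_x,S_x} with $f$ replaced by $F$. The forcing term destroys exact monotonicity, but the argument of Lemma~\ref{lem:ROC_first_order} gives the perturbed versions
\[
I_z\ge I_x-C\mathcal R_2(x)/x,\qquad S_z\le S_x+C\mathcal R_2(x)/x,\qquad z\in[x,x+\varsigma].
\]
Because $\sum_k\mathcal R_2(x+k\varsigma)/(x+k\varsigma)<\infty$, the limits $I_\infty$ and $S_\infty$ still exist. The modulus bound~\eqref{eq:sup_norm_bound_f} uses only linear growth. Linear growth of $F$ follows from the equation by a crude induction, since $F\ge0$ and $F$ is locally bounded. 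With it, $S_x-I_x=\Oh(1/x)$, so $I_\infty=S_\infty\eqqcolon\rho\ge0$, where nonnegativity comes from $F\ge0$.

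Set $F^*(x)\coloneqq F(x)-\rho(x+\varsigma)$. I then redo the zero-crossing argument. On each window $[\varpi,\varpi+a]$, there may no longer be an exact zero of $F^*/(x+\varsigma)$. The bounds $I_\varpi\le\rho+o(1)$ and $S_\varpi\ge\rho-o(1)$ still give a point $x_\varpi$ with $|F^*(x_\varpi)|\le\epsilon_\varpi$, where $\epsilon_\varpi$ is a tail sum of the forcing errors. Following the proof of~\eqref{eq:gen_bound_th_D_xi}, $\mathfrak D(\varpi)=\sup_{[\varpi,\varpi+a]}|F^*|$ satisfies
\[
\mathfrak D(\varpi)\le\frac{a}{\varpi-a}\bigl(\mathfrak D(\varpi-a)+\mathfrak D(\varpi-\varsigma)\bigr)+C\mathcal R_2(\varpi).
\]
I iterate this with Lemma~\ref{lem:technical_D_lemma}, adapted to carry the inhomogeneous term. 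The homogeneous part contributes $(2e/(\varpi/a))^{\varpi/a+\Oh(1)}$ times polynomial factors. This is far smaller than $\mathcal R_2$, because $2e<4e$.

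The main obstacle is controlling the accumulated forcing. One step at level $\varpi-j a$ contributes about $\mathcal R_2(\varpi-ja)\prod_{i<j}\frac{2a}{\varpi-ia}$. The ratio $\mathcal R_2(\varpi-a)/\mathcal R_2(\varpi)$ is roughly $\varpi/(4a)$ up to a factor $e$. Hence each term shrinks by roughly $\frac{2a}{\varpi}\cdot\frac{\varpi}{4a}e=e/2$, possibly with a polynomial correction. I would prove instead, by induction on $\lceil \varpi/a\rceil$, the statement
\[
\mathfrak D(\varpi)\le M\mathcal R_2(\varpi)
\]
for large $\varpi$. The induction step needs
\[
\frac{a}{\varpi-a}\bigl(\mathcal R_2(\varpi-a)+\mathcal R_2(\varpi-\varsigma)\bigr)\le\theta\,\mathcal R_2(\varpi)
\]
for some $\theta<1$. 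This requires explicit ratio estimates as in~\eqref{eq:bound_exp_rate}. If the constant $\theta$ fails to be below $1$, I would fall back on slightly inflating $M$ by a geometric factor across finitely many windows, using the much faster decay of the homogeneous part.
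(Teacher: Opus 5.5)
\textbf{The contraction constant in your final step.} Your plan follows the paper's proof. The subtraction identity removes $C_0$. You then use the perturbed monotonicity of $I_x,S_x$ on windows of length $\varsigma$, with a summable tail $\Delta_x=\Oh(\mathcal R_2(x)/x)$. Next comes an approximate anchor $x_\varpi$, the inhomogeneous one-step bound for $\mathfrak D$, and an induction $\mathfrak D(\varpi)\le M\mathcal R_2(\varpi)$. For the anchor you need the quantitative bound $\Delta_\varpi$, not merely $o(1)$, to get $\Oh(\mathcal R_2(\varpi))$ there.

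The genuine problem is that you leave the decisive inequality open, and your heuristic for it is wrong in a way that would break the proof. You estimate the one-step factor as roughly $e/2>1$. If that were true, the induction could not close. Your fallback does not help either: a factor above $1$ is incurred at every one of the roughly $\varpi/\varsigma$ windows. No finite inflation of $M$ absorbs that, and you would only get $\mathcal R_2(\varpi)$ times an exponentially growing factor.

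\textbf{The correct computation.} The extra factor $e$ you inserted is in fact cancelled. With $z=\varpi/a$ and $\mathcal R_2(\varpi)=(4e/z)^{z-\alpha}$, where $\alpha\in\{4,\,5+\varsigma/a\}$,
\[
\frac{\mathcal R_2(\varpi-a)}{\mathcal R_2(\varpi)}=\frac{z}{4e}\Bigl(\frac{z}{z-1}\Bigr)^{z-1-\alpha}\sim\frac{z}{4},
\qquad\text{so}\qquad
\frac{2a}{\varpi-a}\cdot\frac{\mathcal R_2(\varpi-a)}{\mathcal R_2(\varpi)}\to\frac12 .
\]
For large $\varpi$ we also have $\mathcal R_2(\varpi-\varsigma)\le\mathcal R_2(\varpi-a)$. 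Hence you may eventually take $\theta=3/4$, and choosing $M\ge C/(1-\theta)$ closes the step. The base $4e$ is what creates this room: a rate $(ce/z)^{z-\alpha}$ gives limiting factor $2/c$, and $c=2$, the homogeneous rate, is exactly borderline.

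\textbf{The induction index.} Your induction cannot run on $\lceil\varpi/a\rceil$. When $b<a$ we have $\varsigma<a$, so $\varpi-\varsigma$ can lie in the same $a$-block as $\varpi$, where the hypothesis is not yet available. Instead, induct over successive windows of length $\varsigma$, as the paper does. The base case is an initial stretch of length $a$, handled by local boundedness of $F$ and positivity of $\mathcal R_2$ on compacts.
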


\begin{proof}
Let $H(x)\coloneqq x+\varsigma $. A direct calculation shows that $H$ satisfies the homogeneous equation
\[
H(x+a) = \frac{1}{x} \mleft( \int_0^xH(t)\D t + \int_\delta^{x+\delta}H(t)\D t \mright).
\]

Let $0<x\le y$ be sufficiently large. Multiplying~\eqref{eq:scalar_DR_equation} at $x$ by $x/y$ and subtracting the resulting identity from~\eqref{eq:scalar_DR_equation} at $y$, we obtain
\begin{equation}\label{eq:scalar_DR_fundamental}
F(y+a) = \frac{x}{y}F(x+a) + \frac{1}{y} \mleft(\int_x^yF(t)\D t + \int_{x+\delta}^{y+\delta}F(t)\D t \mright) + p(y+a) - \frac{x}{y}p(x+a).
\end{equation}
Notice that the terms involving $C_0$ cancel exactly.

For $A\in\R$, define $G_A(t)\coloneqq F(t)-A\cdot (t+\varsigma )$. Since $H(t)=t+\varsigma $ satisfies the homogeneous equation,~\eqref{eq:scalar_DR_fundamental} yields
\begin{equation}\label{eq:scalar_DR_fundamental_G}
G_A(y+a) = \frac{x}{y}G_A(x+a) + \frac{1}{y} \mleft( \int_x^yG_A(t)\D t + \int_{x+\delta}^{y+\delta}G_A(t)\D t \mright) + p(y+a) - \frac{x}{y}p(x+a).
\end{equation}

Define $I_x \coloneqq \inf_{x\le t\le x+a} F(t)/(t+\varsigma )$ and $S_x \coloneqq \sup_{x\le t\le x+a} F(t)/(t+\varsigma )$. Furthermore, let $ p_x^\ast \coloneqq \sup_{x\le t\le x+2a}|p(t)|$ and $ \varepsilon_x \coloneqq 2p_x^\ast/(x+a+\varsigma )$. Take $A=I_x$ in~\eqref{eq:scalar_DR_fundamental_G}. If $ x\le y\le x+\varsigma $, then $ [x,y]\subseteq[x,x+a]$ and, since $\varsigma +\delta=a$, it follows that $ [x+\delta,y+\delta]\subseteq[x,x+a]$. Hence, all the terms involving $G_{I_x}$ on the right-hand side of~\eqref{eq:scalar_DR_fundamental_G} are non-negative. Therefore, $G_{I_x}(y+a)\ge-2p_x^\ast$. Since, $y+a+\varsigma \ge x+a+\varsigma $, we obtain $ F(y+a)/(y+a+\varsigma ) \ge  I_x-\varepsilon_x$. Together with the definition of $I_x$, this shows that $I_z\ge I_x-\varepsilon_x$ for all $ x\le z\le x+\varsigma $. The same argument with $A=S_x$ gives $S_z\le S_x+\varepsilon_x$ for all $x\le z\le x+\varsigma $.

Define now $\Delta_x  \coloneqq \sum_{j=0}^\infty \varepsilon_{x+j\varsigma }$, and we claim that
\begin{equation}\label{eq:Delta_R2_bound}
    \Delta_x  = \Oh\mleft( \frac{\mathcal R_2(x)}{x} \mright).
\end{equation}
Indeed, by definition of $p_x^*$, since $ p(t)=\Oh(\mathcal R_2(t))$ and $\mathcal R_2$ is eventually decreasing, there exists a constant $C>0$ such that, for all sufficiently large $x$, we have $ p_x^\ast \le C\mathcal R_2(x)$. Consequently,
\begin{equation}\label{eq:epsilon_R2_bound}
    \varepsilon_x \le  C\frac{\mathcal R_2(x)}{x}.
\end{equation} We next use the fact that
\begin{equation}\label{eq:R2_gamma_ratio_zero}
    \frac{\mathcal R_2(x+\varsigma )}{\mathcal R_2(x)} \to  0, \qquad\text{as }x\to\infty.
\end{equation}
To verify this, write $z=x/a$, $d=\varsigma /a>0$, and let $\alpha = 4\1_{\{a=b\}}+(5+(a+b)/(2a))\1_{\{a>b\}}$. Then, $\mathcal R_2(x) = (4e/z)^{z-\alpha}$, and therefore
\begin{equation*}
\frac{\mathcal R_2(x+\varsigma )} {\mathcal R_2(x)}= \mleft(\dfrac{4e}{z+d}\mright)^{z+d-\alpha}  \mleft(\dfrac{4e}{z}\mright)^{\alpha-z}= (4e)^d (z+d)^{-d} \mleft( \frac{z}{z+d} \mright)^{z-\alpha}\to  0, \quad \text{as }x \to \infty,
\end{equation*}
Since $( z/(z+d))^{z-\alpha}\to e^{-d}$ and $(z+d)^{-d}\to 0$ as $x \to \infty$, we obtain~\eqref{eq:R2_gamma_ratio_zero}.

It follows that there exists $x_1>0$ such that $ \mathcal R_2(y+\varsigma ) \le \frac12\mathcal R_2(y)$ for every $y\ge x_1$. Hence, for $x\ge x_1$ and every $j\ge0$,
\begin{equation}\label{eq:R2_geometric_decay}
    \mathcal R_2(x+j\varsigma ) \le 2^{-j}\mathcal R_2(x).
\end{equation}
Using~\eqref{eq:epsilon_R2_bound} and~\eqref{eq:R2_geometric_decay}, we obtain
\begin{equation*}
\Delta_x = \sum_{j=0}^\infty \varepsilon_{x+j\varsigma } \le C \sum_{j=0}^\infty \frac{ \mathcal R_2(x+j\varsigma ) }{ x+j\varsigma } 
\le \frac{C}{x} \sum_{j=0}^\infty \mathcal R_2(x+j\varsigma ) \le \frac{C\mathcal R_2(x)}{x} \sum_{j=0}^\infty2^{-j} = \frac{2C\mathcal R_2(x)}{x},
\end{equation*} proving~\eqref{eq:Delta_R2_bound}.

Iterating the inequalities $I_z\ge I_x-\varepsilon_x$ for all $ x\le z\le x+\varsigma $ and $S_z\le S_x+\varepsilon_x$ for all $x\le z\le x+\varsigma $ therefore gives, for all sufficiently large $y\ge x$, that
\begin{equation}\label{eq:IS_propagation}
    I_y\ge I_x-\Delta_x, \qquad \text{and}\qquad  S_y\le S_x+\Delta_x.
\end{equation}

Since $F\ge0$, the quantities $I_x$ are bounded below. Moreover, fixing one sufficiently large $x_0$,~\eqref{eq:IS_propagation} shows that $S_y\le S_{x_0}+\Delta_{x_0}$ for all $y \ge x_0$, so the quantities $S_y$ are bounded above. Taking lower and upper limits in~\eqref{eq:IS_propagation}, and then letting $x\to\infty$, shows that both limits $I_\infty \coloneqq \lim_{x\to\infty}I_x$ and $S_\infty \coloneqq \lim_{x\to\infty}S_x$ exist and are finite.

We next show that these limits coincide. Since $S_y$ is bounded from above and $I_y$ is bounded from below, it follows that $F(x)=\Oh(x)$. Using~\eqref{eq:scalar_DR_fundamental}, for $x\le y\le x+a$ we therefore obtain $|F(y+a)-F(x+a)|=\Oh(1)$. Indeed, the first term produced by subtracting $F(x+a)$ is bounded by $(y-x)y^{-1}|F(x+a)|=\Oh(1)$, each integral has bounded length and an integrand of order $x$, and the terms involving $p$ tend to zero.

It follows that
\[
\sup_{x+a\le u,v\le x+2a} \mleft| \frac{F(u)}{u+\varsigma } - \frac{F(v)}{v+\varsigma } \mright| = \Oh(x^{-1}), \quad \text{ as } x \to \infty.
\]
Hence, $S_x-I_x\to 0$ as $x \to \infty$, and therefore $I_\infty=S_\infty$. Denote their common value by $\rho$. Since $F\ge0$, we have $\rho\ge0$.

From~\eqref{eq:IS_propagation}, letting $y\to\infty$ gives $I_x-\Delta_x \le \rho \le S_x+\Delta_x$ for all sufficiently large $x$. Since $F(t)/(t+\varsigma )$ is continuous for all sufficiently large $t$, there therefore exists $x_\varpi\in[\varpi,\varpi+a]$ such that $| F(x_\varpi)/(x_\varpi+\varsigma )-\rho|\le \Delta_\varpi$. By~\eqref{eq:Delta_R2_bound}, it thus follows that
\begin{equation}\label{eq:scalar_anchor_bound}
    \mleft| F(x_\varpi)-\rho\cdot (x_\varpi+\varsigma ) \mright| = \Oh(\mathcal R_2(\varpi)).
\end{equation} Set $G(x)\coloneqq F(x)-\rho\cdot (x+\varsigma )$ and $ \mathfrak D(\varpi) \coloneqq \sup_{\varpi\le t\le\varpi+a}|G(t)|$. We now derive a one-step estimate for $\mathfrak D$. Suppose first that $x_\varpi\le t\le\varpi+a$, and apply~\eqref{eq:scalar_DR_fundamental_G} with $x=x_\varpi-a$, $y=t-a$, and $A=\rho$. The first integration interval in~\eqref{eq:scalar_DR_fundamental_G} is contained in $[\varpi-a,\varpi]$, and the second is contained in $[\varpi-\varsigma ,\varpi-\varsigma +a]$. Using~\eqref{eq:scalar_anchor_bound}, we thereby obtain $ |G(t)| \le a(\varpi-a)^{-1} ( \mathfrak D(\varpi-a)  + \mathfrak D(\varpi-\varsigma ) ) + C\mathcal R_2(\varpi)$. Suppose next that $ \varpi\le t\le x_\varpi$, and apply~\eqref{eq:scalar_DR_fundamental_G} with $x=t-a$, $y=x_\varpi-a$, $A=\rho$, and solve the resulting identity for $G(t)$. Since $(x_\varpi-a)/(t-a) \le \varpi/(\varpi-a)$, which is bounded for large $\varpi$, the same argument gives $|G(t)| \le a (\varpi-a)^{-1} ( \mathfrak D(\varpi-a)  + \mathfrak D(\varpi-\varsigma ) ) + C\mathcal R_2(\varpi)$. Thus,
\begin{equation}\label{eq:scalar_DR_one_step}
    \mathfrak D(\varpi) \le \frac{a}{\varpi-a} \mleft( \mathfrak D(\varpi-a) + \mathfrak D(\varpi-\varsigma ) \mright) + C\mathcal R_2(\varpi).
\end{equation}

We finally show that~\eqref{eq:scalar_DR_one_step} implies $\mathfrak D(\varpi)=\Oh(\mathcal R_2(\varpi))$. Write $z=\varpi/a$, and recall definition of $\alpha$. Then, $ \mathcal R_2(\varpi) = (4e/z)^{z-\alpha}$, and
\[
\frac{2a}{\varpi-a} \frac{\mathcal R_2(\varpi-a)} {\mathcal R_2(\varpi)} = \frac{2}{z-1} \mleft(\dfrac{4e}{z-1}\mright)^{z-1-\alpha}\mleft(\dfrac{4e}{z}\mright)^{\alpha-z} = \frac{z}{2e(z-1)} \mleft( \frac{z}{z-1} \mright)^{z-1-\alpha} \to  \frac12,
\] as $\varpi \to \infty$. Since $\varsigma \le a$ and $\mathcal R_2$ is eventually decreasing, it follows that $\mathcal R_2(\varpi-\varsigma ) \le \mathcal R_2(\varpi-a)$. Hence, for all sufficiently large $\varpi$,
\[
\frac{a}{\varpi-a} \frac{ \mathcal R_2(\varpi-a)  + \mathcal R_2(\varpi-\varsigma )}{ \mathcal R_2(\varpi) } \le \frac34.
\]
Choosing a sufficiently large constant $M$, and applying~\eqref{eq:scalar_DR_one_step} successively on intervals of length $\varsigma $, we obtain $ \mathfrak D(\varpi)\le M\mathcal R_2(\varpi)$ for all sufficiently large $\varpi$. This proves $F(x) = \rho(x+\varsigma ) + \Oh(\mathcal R_2(x))$, and hence concludes the proof.
\end{proof}

We can now apply Lemma~\ref{lem:scalar_DR_variance} to the second moments. Define
\begin{equation}\label{eq:defn_average_difference_B}
\overline{B}(x) \coloneqq \frac{B_{\calP}(x)+B_{\calT}(x)}{2}, \qquad \text{and} \qquad  \Delta_B(x) \coloneqq B_{\calP}(x)-B_{\calT}(x).
\end{equation}

\begin{lemma}\label{lem:B_second_moment_asymptotics}
There exists a constant $\lambda_2(a,b)\ge0$, such that, as $x \to \infty$,
\[
B_{\calP}(x)
= \lambda_2(a,b)(x+\varsigma ) + \Oh(\mathcal R_2(x)), \quad \text{and}\quad  B_{\calT}(x) = \lambda_2(a,b)(x+\varsigma ) + \Oh(\mathcal R_2(x)).
\]
\end{lemma}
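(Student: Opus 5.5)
Split into the average $\overline B$ and the difference $\Delta_B$ from \eqref{eq:defn_average_difference_B}, exactly as was done for $\eta$ and $h=\mu-\nu$ in the first-order theory. First add \eqref{eq:BP_rec} and \eqref{eq:BT_rec}. The two cross integrals combine as for $\eta$ in Lemma~\ref{lem:eta_func}: $\int_0^{x-a}B_\calP+\int_\delta^{x-a}B_\calT$ and $\int_\delta^{x-\varsigma}B_\calT+\int_0^{x-\varsigma}B_\calP$. Since $B_\calT(t)=L(t)^2$ is explicit and bounded for $t\in(0,\delta)$ (there $T_t=0$), write $\int_\delta^{y}B_\calT=\int_0^{y}B_\calT-\int_0^\delta L^2$. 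This gives
\[
\overline B(x)=\frac{1}{x-a}\mleft(\int_0^{x-a}\overline B(t)\D t+\int_0^{x-\varsigma}\overline B(t)\D t\mright)-\frac{\int_0^\delta L(t)^2\D t}{x-a}+\frac{q_\calP(x)+q_\calT(x)}{2}.
\]
Shifting $x\mapsto x+a$ and substituting $\int_0^{x+\delta}=\int_0^\delta+\int_\delta^{x+\delta}$ puts this in the form \eqref{eq:scalar_DR_equation}. The constant $C_0$ collects $\int_0^\delta\overline B$ and $-\int_0^\delta L^2$. The remainder is $p=(q_\calP+q_\calT)/2$, which Lemma~\ref{lem:q_toll_bound} shows is $\Oh(\mathcal R_2)$. $\overline B\ge0$ is locally bounded by Remark~\ref{rem:det_bound_eta}. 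Continuity for large arguments follows from the recursion itself, as for $\eta$, because the right-hand side is continuous once the $e_s$ are continuous. Lemma~\ref{lem:scalar_DR_variance} then gives $\overline B(x)=\lambda_2(a,b)(x+\varsigma)+\Oh(\mathcal R_2(x))$ with $\lambda_2(a,b)\coloneqq\rho\ge0$.

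Next, subtract the two recursions. Mirroring \eqref{eq:h_contraction}, the integrals telescope to
\[
\Delta_B(x)=-\frac{1}{x-a}\int_{x-a}^{x-\varsigma}\Delta_B(t)\D t+\bigl(q_\calP(x)-q_\calT(x)\bigr)=-\frac{1}{x-a}\int_0^\delta\Delta_B(x-a+u)\D u+\Oh(\mathcal R_2(x)).
\]
If $a=b$, then $\delta=0$ and $\Delta_B\equiv0$ by symmetry. For $a>b$, iterate this contraction as in the proof of Lemma~\ref{lem:typewise_mean_error}. The homogeneous part contributes $C(1+x)^2(\delta/a)^{k_x}\Gamma(z-k_x)/\Gamma(z)$, using the a priori quadratic bound on $B_s$ from Remark~\ref{rem:det_bound_eta}. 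Since $\delta/a\le1/2<4/4$, this is $\oh(\mathcal R_2)$. Each inhomogeneous term injected at stage $j$ is multiplied by $\delta^j/\prod_{i\le j}(x-ia)$ and evaluated near $x-ja$.

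The main obstacle is summing these stage-$j$ contributions. I would show $\sum_j(\delta/(x-ja))^{j}\,\mathcal R_2(x-ja)\cdot\prod$-factors $=\Oh(\mathcal R_2(x))$ using the ratio computation already used at the end of Lemma~\ref{lem:scalar_DR_variance}. That ratio gives $\tfrac{a}{\varpi-a}\mathcal R_2(\varpi-a)/\mathcal R_2(\varpi)\to1/2$, and a fortiori the same holds with $\delta\le a$. So, as in that lemma, it suffices to prove $\sup_{[\varpi,\varpi+a]}|\Delta_B|\le M\mathcal R_2(\varpi)$ by induction over intervals, using $\mathfrak D_\Delta(\varpi)\le\tfrac{\delta}{\varpi-a}\mathfrak D_\Delta(\varpi-a)+C\mathcal R_2(\varpi)$. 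This gives $\Delta_B=\Oh(\mathcal R_2)$.

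Finally, $B_\calP=\overline B+\Delta_B/2$ and $B_\calT=\overline B-\Delta_B/2$, which gives both claimed expansions with the common $\lambda_2(a,b)$.
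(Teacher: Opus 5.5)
Your proposal is correct and follows the paper's proof essentially step for step. You split into $\overline{B}$ and $\Delta_B$, reduce $\overline{B}$ to the scalar equation \eqref{eq:scalar_DR_equation} with $p=(q_{\calP}+q_{\calT})/2$ and apply Lemma~\ref{lem:scalar_DR_variance}, and control $\Delta_B$ by induction over successive intervals using $\frac{\delta}{x-a}\frac{\mathcal R_2(x-a)}{\mathcal R_2(x)}\to\frac{\delta}{4a}\le\frac18$. Your constant $C_0=\int_0^\delta\overline{B}-\int_0^\delta L^2$ coincides with the paper's $\frac12\int_0^\delta\Delta_B$ because $B_{\calT}=L^2$ on $(0,\delta)$; the one point to tidy is that the one-step bound for $\Delta_B$ should be run pointwise (or allow for overlap), since for $x\in[\varpi,\varpi+a]$ the window $[x-a,x-\varsigma]$ reaches up to $\varpi+\delta$, not just $\varpi$.
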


\begin{proof}
We first control the difference $\Delta_B$. Subtracting~\eqref{eq:BT_rec} from~\eqref{eq:BP_rec} gives
\begin{equation}\label{eq:D_B_recursion}
\Delta_B(x) = -\frac{1}{x-a} \int_{x-a}^{x-\varsigma } \Delta_B(t)\D t + r(x), \quad \text{where}\quad  r(x)\coloneqq q_{\calP}(x)-q_{\calT}(x).
\end{equation} By Lemma~\ref{lem:q_toll_bound}, $r(x)=\Oh(\mathcal R_2(x))$. If $a=b$, then $\calP_x$ and $\calT_x$ coincide, and therefore $B_{\calP}(x)=B_{\calT}(x)$ for every $x$, so $\Delta_B \equiv0$. Assume instead that $a>b$. Since the interval of integration in~\eqref{eq:D_B_recursion} has length $\delta$, and $\mathcal R_2$ is eventually decreasing, if $|\Delta_B(t)|\le M\mathcal R_2(t)$ for all earlier arguments, then $|\Delta_B(x)| \le M \delta(x-a)^{-1}\mathcal R_2(x-a) + C\mathcal R_2(x)$. Moreover,
\[
 \frac{\delta}{x-a} \frac{\mathcal R_2(x-a)} {\mathcal R_2(x)} \rightarrow \frac{\delta}{4a} \le \frac18.
\]
Thus, by choosing $M$ sufficiently large and arguing successively on intervals of length $\varsigma $, we obtain
\begin{equation}\label{eq:D_B_R2}
    \Delta_B(x)=\Oh(\mathcal R_2(x)).
\end{equation}

We next derive a scalar equation for $\overline{B}$. Adding~\eqref{eq:BP_rec} and~\eqref{eq:BT_rec}, and using $B_{\calP}=\overline{B}+\Delta_B/2$ and $B_{\calT}=\overline{B}-\Delta_B/2$, gives
\[
\overline{B}(x) = \frac{1}{x-a} \mleft( \int_\delta^{x-a}\overline{B}(t)\D t +\int_\delta^{x-\varsigma }\overline{B}(t)\D t + \int_0^\delta B_{\calP}(t)\D t \mright) + \frac{q_{\calP}(x)+q_{\calT}(x)}{2}.
\]
Replacing $x$ by $x+a$ yields
\begin{equation}\label{eq:A(x+a)_identity}
 \overline{B}(x+a)
=
\frac{1}{x} \mleft( \int_\delta^x \overline{B}(t)\D t + \int_\delta^{x+\delta} \overline{B}(t)\D t  + \int_0^\delta B_{\calP}(t)\D t \mright) + \frac{q_{\calP}(x+a)+q_{\calT}(x+a)}{2}.
\end{equation}
Since, $-\int_0^\delta \overline{B}(t)\D t + \int_0^\delta B_{\calP}(t)\D t
=
\int_0^\delta \mleft( B_{\calP}(t)-B_{\calT}(t) \mright)\D t/2= \int_0^\delta \Delta_B(t)\D t/2$, we can rewrite~\eqref{eq:A(x+a)_identity}, as
\begin{equation}\label{eq:A_scalar_recursion}
\begin{aligned}
\overline{B}(x+a)
&=
\frac{1}{x} \mleft( \int_0^x \overline{B}(t)\D t + \int_\delta^{x+\delta} \overline{B}(t)\D t \mright) + \frac{C_0}{x} + p(x+a),
\end{aligned}
\end{equation}
where $C_0 \coloneqq \int_0^\delta \Delta_B(t)\D t/2$ and $p(x) \coloneqq (q_{\calP}(x)+q_{\calT}(x))/2$. By Lemma~\ref{lem:q_toll_bound}, we thereby get $p(x)=\Oh(\mathcal R_2(x))$. Furthermore, $\overline{B}$ is non-negative and locally bounded, and the integral recursions imply that $\overline{B}$ is continuous for all sufficiently large arguments. Lemma~\ref{lem:scalar_DR_variance} therefore yields a constant $\lambda_2(a,b)\ge0$ such that
\begin{equation}\label{eq:A_second_order_asymptotic}
\overline{B}(x)  = \lambda_2(a,b)(x+\varsigma ) + \Oh(\mathcal R_2(x)).
\end{equation}
Combining~\eqref{eq:A_second_order_asymptotic} with~\eqref{eq:D_B_R2}, $B_{\calP}=\overline{B}+\Delta_B/2$, and $B_{\calT}=\overline{B}-\Delta_B/2$ proves the result.
\end{proof}

It remains to show that the constant $\lambda_2(a,b)$ is strictly positive. For this, we adapt the argument used by Dvoretzky and Robbins in the proof of~\cite[Thm~4]{Dvoretzky_Robbins}.

\begin{lemma}
\label{lem:positive_variance_lower_bound}
There exists a $c=c(a,b)>0$ such that $\Var(P_x)\ge c/x$ for all sufficiently large $x$.
\end{lemma}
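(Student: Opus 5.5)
We will prove the bound by exhibiting, for each large $x$, an event of probability of order $1/x$ on which $P_x$ differs from its typical value by at least one; this is enough for a lower bound of order $1/x$. The idea, adapted from Dvoretzky--Robbins, is to condition on the first deposition. By the law of total variance,
\[
\Var(P_x)\ \ge\ \Var\bigl(\E[P_x\mid \tau]\bigr).
\]
By~\eqref{eq:probibalistic_vers_P_x_T_x} and the conditional independence of the two child strips, on the upper branch $\E[P_x\mid\tau=t]=1+\mu(t)+\nu(x-\varsigma-t)$, and similarly on the lower branch. So it suffices to show that the function
\[
g(t)\coloneqq 1+\mu(t)+\nu(x-\varsigma-t),
\]
with $\tau$ uniform on an interval of length of order $x$, has variance at least $c/x$.

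The first key step is to find two disjoint subintervals of $t$-values of length bounded below, on which $g$ takes values differing by a fixed amount. The natural choice is near the endpoint $t\approx 0$, where the child parallelogram $\calP_t$ is small. For $t\in(0,a)$ we have $\mu(t)=0$, whereas for $t\in(a,a+\epsilon)$ we have $\mu(t)\ge 1$. Meanwhile $\nu(x-\varsigma-t)$ varies little over $t\in(0,a+\epsilon)$: by Lemma~\ref{lem:typewise_mean_error} it equals $L(x-\varsigma-t)$ up to a super-exponentially small error, so its change is $\lambda(a,b)\,\Delta t$ plus that error. The jump from $\mu$ is exactly $1$, while the linear part decreases by about $\lambda(a,b)\,\Delta t$.

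To make the comparison sharp, compare $t\in(a-\epsilon,a)$ with $t\in(a,a+\epsilon)$ for a small $\epsilon>0$. On these two intervals $g$ differs by at least $1-2\lambda(a,b)\epsilon-o(1)\ge 1/2$.

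The second step converts this separation into a variance bound. Each of the two intervals has $\tau$-probability $\epsilon/(2x-2a)$. If a random variable takes values at least $1/2$ apart on two events $A_1,A_2$, each of probability $p$, then its variance is at least $p/16$. Indeed, for any constant $m$ the two values cannot both lie within $1/4$ of $m$, so
\[
\E\bigl[(g-m)^2\bigr]\ \ge\ \tfrac{1}{16}\,\min\bigl(\mathbb P(A_1),\mathbb P(A_2)\bigr).
\]
This gives $\Var(P_x)\ge c\,\epsilon/x$ with $c$ depending only on $(a,b)$.

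The main obstacle is controlling the variation of $\nu(x-\varsigma-t)$ over the short $t$-window uniformly in large $x$. Lemma~\ref{lem:typewise_mean_error} handles this, since $e_\calT$ is uniformly small for large arguments. If one prefers to avoid it, Lemma~\ref{lem:eta_func}-type monotonicity together with the Lipschitz-type bound~\eqref{eq:sup_norm_bound_f}, transferred from $\eta$ to $\nu$ via $h=\mu-\nu\to0$, also suffices. One must also check that $\mu$ genuinely jumps at $a$, which holds since $P_a=1$ and $\mu\ge1$ on $[a,\infty)$.
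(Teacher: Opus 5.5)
Your argument is correct, but it uses the other half of the total-variance decomposition from the one the paper uses.

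\textbf{The paper's route.} It keeps the within-branch term $\E[\Var(P_x\mid\tau)]$. Via conditional independence of the two child strips, this becomes the recursive inequality
\[
V_{\calP}(x)\ge \frac{1}{x-a}\Bigl(\int_0^{x-a}V_{\calP}(t)\D t+\int_\delta^{x-\varsigma}V_{\calT}(t)\D t\Bigr).
\]
It then only needs one fixed bounded interval $J$ on which some $V_s$ is positive. It finds $J$ by exact small-$x$ computations, split into three cases: $0<b<a$; $b=0$, where $P_x\equiv 2$ on $(a,3a/2)$ so one must pass to $\calT$; and $a=b$.

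\textbf{Your route.} You keep the between-branch term $\Var(\E[P_x\mid\tau])$ and exploit the jump of $\mu$ from $0$ to at least $1$ at $t=a$. You control the drift of $\nu(x-\varsigma-t)$ across a window of width $2\epsilon$ by Lemma~\ref{lem:typewise_mean_error}. Your two-event variance bound is right and gives the explicit estimate $\Var(P_x)\ge \epsilon/(32(x-a))$.

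\textbf{Comparison.} What your route buys is uniformity: there is no case distinction in $(a,b)$. It also does not need conditional independence, since linearity of conditional expectation suffices for $\E[P_x\mid\tau=t]=1+\mu(t)+\nu(x-\varsigma-t)$. What it costs is a dependence on the quantitative first-order asymptotics, and hence ultimately on Theorem~\ref{thm:main_theorem}. The paper's argument uses only the recursive structure and elementary initial values.

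\textbf{A caveat on your fallback.} As stated, \eqref{eq:sup_norm_bound_f} concerns windows of length $a$. It gives an increment bound $\wt c+\wh c/x$ with $\wt c=3ac\ge 3$, which is not small. To use it you would have to redo that estimate on windows of length $2\epsilon$, where it does scale like $\epsilon$. Your main route via Lemma~\ref{lem:typewise_mean_error} does not need this.
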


\begin{proof}
Write $V_{\calP}(x)\coloneqq\Var(P_x)$ and $V_{\calT}(x)\coloneqq\Var(T_x)$. By the law of total variance, it follows that
\[
\Var(P_x) = \E\big[\Var(P_x\mid\tau)\big] + \Var\big(\E[P_x\mid\tau]\big) \ge \E\big[\Var(P_x\mid\tau)\big].
\]
Conditional on the first deposition, the parking numbers on the two child strips are independent. Repeating the same changes of variables as in the proof of Lemma~\ref{lem:second_moment_recurrences}, we therefore obtain
\begin{equation}\label{eq:variance_lower_rec_P}
V_{\calP}(x) \ge \frac{1}{x-a} \mleft( \int_0^{x-a}V_{\calP}(t)\D t + \int_\delta^{x-\varsigma }V_{\calT}(t)\D t \mright),
\end{equation} for all sufficiently large $x$.

We shall show that, for every choice of $0\le b\le a$, there exists a non-empty bounded open interval $J$ and a type $s\in\{\calP,\calT\}$ such that $\Var(X_s(t))>0$ for all $t \in J$. Assume first that $0<b<a$. Recall from the proof of Lemma~\ref{lem:eta_func} that, for $a+b<x<(3a+b)/2$, we have
\[
P_x = 2\1_{\{\tau\in(0,x-a-b)\}} + \1_{\{\tau\in(x-a-b,x-a+b)\}} + 2\1_{\{\tau\in(x-a+b,2x-2a)\}}, \quad \text{where}\quad \tau\sim\mathcal U(0,2x-2a).
\]
Since $b>0$ and $x>a+b$, both the event on which $P_x=1$ and the event on which $P_x=2$ have strictly positive probability. Hence, $V_{\calP}(x)>0$ for all $a+b<x<(3a+b)/2$. Assume now instead that $b=0$. In this case, we can show that $V_{\calT}(x)>0$ for all $3a/2<x<2a$. Fix such an $x$. On the first branch in the recursive representation of $T_x$, namely $ 0<\tau<x-3a/2$, the two child strips are of type $\calT$ and have sizes $\tau + a/2$ and $x-\tau-a$. Both sizes belong to $(a/2,a)$. For $b=0$, a trapezium strip $\calT_y$ with $a/2 < y< a$ contains exactly one deposited triangle. Consequently, $T_x=3$ on an event of strictly positive probability. On the second branch, write $t_\tau = \tau+3a/2-x$. The two child strips are of type $\calP$ and have sizes $t_\tau$ and $x-a/2-t_\tau$. Whenever $x-3a/2 < t_\tau < a$, both of these sizes are strictly smaller than $a$. Since $P_y=0$ for $y<a$ when $b=0$, neither child strip contains an additional triangle. Hence, $T_x=1$ on an event of strictly positive probability. We conclude therefore that $V_{\calT}(x)>0$ for all $3a/2<x<2a$.

Finally, assume that $a=b$. Then the model reduces, after the spatial rescaling $x\mapsto x/a$, to the classical one-dimensional parking problem. Consider $2a<x<3a$. There is a set of first deposition positions of positive Lebesgue measure for which both remaining gaps have size strictly less than $a$, and hence only one trapezium is eventually deposited. There is also a set of first deposition positions of positive Lebesgue measure for which one of the two remaining gaps has size at least $a$, in which case at least one additional trapezium is deposited. Thus $V_{\calP}(x)>0$ for all $ 2a<x<3a$.

We have therefore shown in all cases that there exists a bounded open interval $J$ and a type $s\in\{\calP,\calT\}$, such that $V_s(t)>0$ for all $t \in J$. Consequently, $A_0 \coloneqq \int_JV_s(t)\D t > 0$.

If $s=\calP$, then, for all sufficiently large $x$, the interval $J$ is contained in $[0,x-a]$. Hence~\eqref{eq:variance_lower_rec_P} gives
\[
V_{\calP}(x) \ge \frac{1}{x-a} \int_JV_{\calP}(t)\D t 
=
\frac{A_0}{x-a}>0.
\]
If $s=\calT$, then, for all sufficiently large $x$, the interval $J$ is
contained in $[\delta,x-\varsigma ]$, and therefore
\[
V_{\calP}(x) \ge \frac{1}{x-a} \int_JV_{\calT}(t)\D t
=
\frac{A_0}{x-a}>0.
\]
Thus, in either case, $V_{\calP}(x) \ge A_0/(x-a)>0$ for all sufficiently large $x$. After decreasing the constant if necessary, this yields $V_{\calP}(x)\ge c/x$ for some $c>0$ and all sufficiently large $x$.
\end{proof}

We can now establish the asymptotic behaviour of the variances, and hence prove Theorem~\ref{thm:variance_asymptotics}.

\begin{proof}[Proof of Theorem~\ref{thm:variance_asymptotics}]
By Lemma~\ref{lem:B_second_moment_asymptotics}, there exists a constant $\lambda_2(a,b)\ge0$ such that it holds that $B_{\calP}(x) = \lambda_2(a,b)(x+\varsigma ) + \Oh(\mathcal R_2(x))$ and $B_{\calT}(x) = \lambda_2(a,b)(x+\varsigma ) + \Oh(\mathcal R_2(x))$. By~\eqref{eq:Var_from_B}, we have $\Var(X_s(x)) = B_s(x)-e_s(x)^2$ for $s\in\{\calP,\calT\}$. Lemma~\ref{lem:typewise_mean_error} therefore gives $e_s(x)^2 = \Oh(\mathcal R_1(x)^2)$. A direct comparison of~\eqref{eq:defn_R1_variance_section} and~\eqref{eq:defn_R2_variance_section}, shows that $\mathcal R_1(x)^2 = \oh(\mathcal R_2(x))$. Consequently, $\Var(P_x) = \lambda_2(a,b)(x+\varsigma ) + \Oh(\mathcal R_2(x))$ and $\Var(T_x) = \lambda_2(a,b)(x+\varsigma ) + \Oh(\mathcal R_2(x))$. Since $\mathcal R_2 (x)>0$ for all $x>0$ and $\mathcal{R}_2$ is locally bounded on compact intervals, we can extend this result to hold in the sense that $ \max_{s \in \{\calP,\calT\}}| \Var(X_s(x))-\lambda_2(a,b)(x+(a+b)/2 )| \le C_{a,b} \mathcal R_2(x)$, for all $x>(3a-b)/2$.

It remains only to prove that $\lambda_2(a,b)>0$. Suppose, for contradiction, that $\lambda_2(a,b)=0$. Then, $ \Var(P_x) = \Oh(\mathcal R_2(x))$, and since $\mathcal R_2$ decays super-exponentially, it follows that $\mathcal R_2(x)=\oh(x^{-1})$. It would therefore follow that $ \Var(P_x)=\oh(x^{-1})$. This contradicts Lemma~\ref{lem:positive_variance_lower_bound}, which gives $\Var(P_x)\ge c/x$ for all sufficiently large $x$. Hence, $\lambda_2(a,b)>0$, concluding the proof. 
\end{proof}

\section*{Acknowledgements}

\thanks{
\noindent
The authors would like to thank the Isaac Newton Institute for Mathematical Sciences, Cambridge, for support and hospitality during the programme Stochastic systems for anomalous diffusion, where work on this paper was undertaken. This work was supported by EPSRC grant EP/Z000580/1.}

DKB is supported by AUFF NOVA grant AUFF-E-2022-9-39. S\v{S} is supported by the Croatian Science Foundation, project IP-2022-10-2277. This research was also funded by the European union--NextGenerationEU through the National Recovery and Resilience Plan 2021-2026 Institutional grant of University of Zagreb Faculty of Electrical Engineering and Computing (VALOR). This work was carried out within a project DIGIT.2.1.02.016 funded by the Digital, Innovation, and Green Technology Project – DIGIT Project (IBRD Loan No. 9558‑HR).

\section*{AI disclosure}

During the preparation of this manuscript, the authors used ChatGPT (OpenAI, GPT-5.6) to assist with the development and checking of arguments concerning variance asymptotics, mathematical exposition, and the refinement of selected arguments. All mathematical statements, proofs, references, and conclusions were independently verified by the authors.

\printbibliography

\end{document}